\theoremstyle{definition}
\newtheorem{thm}{Theorem}[section]
\newtheorem{dfn}[thm]{Definition}
\newtheorem{cor}[thm]{Corollary}
\newtheorem{prop}[thm]{Proposition}
\newtheorem{lem}[thm]{Lemma}
\newtheorem{rem}[thm]{Remark}
\newtheorem*{nota}{Notation}
\newtheorem{axm}[thm]{Axiom}
\newtheorem*{oftp}{Organization of this paper}
\newtheorem*{ack}{Acknowledgment}
\numberwithin{thm}{section}
\newcommand{\Zpn}{\mathbb{Z}_{>0}}
\newcommand{\Z}{\mathbb{Z}}
\newcommand{\Zp}{{\mathbb{Z}}_p}
\newcommand{\Zpb}{\breve{\Z}_p}
\newcommand{\Q}{\mathbb{Q}}
\newcommand{\Qbar}{\overline{\Q}}
\newcommand{\Qp}{{\Q}_p}
\newcommand{\Qpt}{{\Q}_{p}^{\times}}
\newcommand{\Qpb}{\breve{\Q}_p}
\newcommand{\Qpbt}{\breve{\Q}_p^{\times}}
\newcommand{\Qpbar}{\overline{\Q}_p}
\newcommand{\Ql}{{\mathbb{Q}}_{\ell}}
\newcommand{\Qlbar}{\Qbar_{\ell}}
\renewcommand{\O}{\mathcal{O}}
\newcommand{\R}{\mathbb{R}}
\newcommand{\C}{\mathbb{C}}
\newcommand{\F}{\mathbb{F}}
\newcommand{\Fp}{\mathbb{F}_p}
\newcommand{\Fpbar}{\overline{\mathbb{F}}_p}
\newcommand{\A}{\mathbb{A}}
\newcommand{\Sbar}{\overline{S}}
\newcommand{\D}{\mathbb{D}}
\newcommand{\G}{\mathbb{G}}
\newcommand{\bS}{\mathbb{S}}
\newcommand{\bG}{\mathbf{G}}
\newcommand{\bI}{\mathbf{I}}
\newcommand{\sI}{\mathscr{I}}
\newcommand{\sS}{\mathscr{S}}
\newcommand{\sShat}{\widehat{\sS}}
\newcommand{\sSbar}{\overline{\sS}}
\newcommand{\X}{\mathbb{X}}
\newcommand{\bX}{\mathbf{X}}
\newcommand{\sX}{\mathscr{X}}
\newcommand{\bT}{\mathbf{T}}
\newcommand{\bZ}{\mathbf{Z}}
\renewcommand{\hbar}{\overline{h}}
\newcommand{\E}{\mathbf{E}}
\newcommand{\bc}{\mathbf{c}}
\newcommand{\bi}{\mathbf{i}}
\newcommand{\bx}{\mathbf{x}}
\newcommand{\cA}{\mathcal{A}}
\newcommand{\cB}{\mathcal{B}}
\newcommand{\cG}{\mathcal{G}}
\newcommand{\cN}{\mathcal{N}}
\newcommand{\cV}{\mathcal{V}}
\newcommand{\Eb}{\breve{E}}
\newcommand{\Kb}{\breve{K}}
\DeclareMathOperator{\Ker}{Ker}
\DeclareMathOperator{\Hom}{Hom}
\DeclareMathOperator{\End}{End}
\DeclareMathOperator{\id}{id}
\DeclareMathOperator{\Aut}{Aut}
\DeclareMathOperator{\Lie}{Lie}
\DeclareMathOperator{\GL}{GL}
\DeclareMathOperator{\GSp}{GSp}
\DeclareMathOperator{\nilp}{Nilp}
\DeclareMathOperator{\spec}{Spec}
\DeclareMathOperator{\spf}{Spf}
\DeclareMathOperator{\dR}{dR}
\DeclareMathOperator{\loc}{loc}
\DeclareMathOperator{\red}{red}
\DeclareMathOperator{\Res}{Res}
\DeclareMathOperator{\diag}{diag}
\DeclareMathOperator{\RZ}{RZ}
\DeclareMathOperator{\Sh}{Sh}
\DeclareMathOperator{\der}{der}
\DeclareMathOperator{\bs}{basic}
\DeclareMathOperator{\Adm}{Adm}
\DeclareMathOperator{\Fil}{Fil}
\DeclareMathOperator{\Ad}{Ad}
\DeclareMathOperator{\Def}{Def}
\DeclareMathOperator{\rR}{R}
\DeclareMathOperator{\rH}{H}
\DeclareMathOperator{\et}{\acute{e}t}
\title[Notes on Rapoport--Zink spaces]{Notes on Rapoport--Zink spaces of Hodge type with parahoric level structure}
\author[Y.Oki]{Yasuhiro Oki}
\address{Graduate School of Mathematical Sciences, 
the University of Tokyo, 3-8-1 Komaba, Meguro-ku, Tokyo 153-8914, Japan. }
\email{oki@ms.u-tokyo.ac.jp}
\begin{document}
\maketitle
\begin{abstract}
In this article, we treat two questions on Rapoport--Zink spaces of Hodge type constructed by Hamacher and Kim. One of which is their singularities, and the other is $p$-adic uniformization of Shimura varieties. More precisely, we prove that the singularity of a Rapoport--Zink space is controlled by its asssociated local model, and the basic locus of a Kisin--Pappas integral model of a Shimura variety is uniformized by the corresponding Rapoport--Zink space. These results extend the known facts by Rapoport and Zink in the case of PEL type. 
\end{abstract}

\tableofcontents

\section{Introduction}\label{intr}

Rapoport--Zink spaces are one of the most important objects in the theory of Shimura varieties. In the case of PEL type, these were introduced by Rapoport and Zink in \cite{Rapoport1996b}. They also proved some fundamental questions on Rapoport--Zink spaces at the same time. We explain two of them more precisely in the following. 

The first question is the singularities of Rapoport--Zink spaces. In \cite[Chapter 3]{Rapoport1996b}, they defined local models, and proved that the singularity of a Rapoport--Zink space is controlled by a local model. We call such a result as a local model diagram. The second question is $p$-adic uniformization of Shimura varieties. In \cite[Chapter 6]{Rapoport1996b}, they considered a $\Zp$-scheme whose generic fiber is a (finite disjoint union of) Shimura variety, and proved that the completion of the considering $\Zp$-scheme along a basic Newton stratum isomorphic to a finite disjoint union of some quotients of the corresponding Rapoport--Zink space. 

In the case of Hodge type, the above questions were solved in the hyperspecial case. More precisely, Kisin defined and studied integral models of Shimura varieties, and proved that they are smooth (\cite{Kisin2010}). Furthermore, Kim (\cite{Kim2018a}, \cite{Kim2018b}), Howard and Pappas (\cite{Howard2017}) gave constructions of the Rapoport--Zink spaces that are formally smooth, and proved the $p$-adic uniformization theorem for basic loci. 

In this article, we consider the singularities of Rapoport--Zink spaces and the $p$-adic uniformization of Shimura varieties in the case of Hodge type with Bruhat--Tits level structure (we will explain this terminology in later) at $p$. In the sequel of this paper, let $p>2$ be an odd prime number. Let $\Qp$ the field of $p$-adic numbers, and fix an isomorphism $\Qpbar \cong \C$. On the other hand, for a discrete valuation field $F$, we write $O_F$ and $\breve{F}$ for the integer ring of $F$ and the completion of the maximal unramified extension of $F$ respectively. 

Let $(\bG,\bX)$ be a Shimura datum of Hodge type, and denote by $\E$ its reflex field. Here we assume the following on the group $G:=\bG \otimes_{\Q}\Qp$: 
\begin{itemize}
\item $G$ splits over a tamely ramified extension of $\Qp$ and $p\nmid \#\pi_{1}(G^{\der})$,
\end{itemize}
Let $K^p$ be a sufficiently small compact open subgroup of $\bG(\A_f^p)$, where $\A_f^p$ is the finite ad{\`e}le ring of $\Q$ without $p$-component, and $K_p$ the \emph{full} stabilizer of a vertex $\bx$ of the (extended) Bruhat--Tits building of $G$. We call such $K_p$ as a \emph{Bruhat--Tits subgroup}. Put $K:=K^pK_p$. For a place $\mathfrak{p}$ of $\E$ lying above $p$, consider a Kisin--Pappas integral model $\sS_{K}$ of the Shimura variety $\Sh_{K}(\bG,\bX)$ of level $K$ constructed in \cite{Kisin2018}. It is a flat $O_{\E,(\mathfrak{p})}$-scheme, where $O_{\E,(\mathfrak{p})}$ is the valuation subring of $\E$ with respect to $\mathfrak{p}$. By construction, there is $n\in \Zpn$ and a finite morphism from $\sS_{K}$ to the moduli space of prime-to-$p$ polarized abelian varieties of dimension $n$ with certain level structure. This morphism gives us an abelian scheme over $\sS_{K}$, and hence a Newton stratification of the geometric special fiber $\sSbar_{K}$ of $\sS_{K}$: 
\begin{equation*}
\sSbar_{K}=\coprod_{[b]\in B(G,\{\mu\})}\sS_{K,[b]}. 
\end{equation*}
Here $\{\mu\}$ is the $G(\Qpbar)$-conjugacy classes of the inverse of a cocharcter of $G$ defined by $\bX$, and $B(G,\{\mu\})$ is the finite subset of the $\sigma$-conjugacy classes of $G(\Qpb)$ which will be introduced in Section \ref{gisc}. 

On the other hand, for a suitable representative $b$ of $[b]$, Hamacher and Kim constructed in \cite{Hamacher2019} a closed formal subscheme $\RZ_{K_p}(G,\mu,b)$ of the deformation space of the principally polarized $p$-divisible group attached to $z$, under a technical hypothesis. It is a formal scheme over $\spf O_{\Eb}$, where $E$ is the completion of $\E$ at $\mathfrak{p}$. We call $\RZ_{K_p}(G,\mu,b)$ as a \emph{Rapoport--Zink space} for $(G,\mu,b)$ with level $K_p$. It is equipped with a left action of the group $J_{b}(\Qp)$, where $J_{b}(\Qp)$ is the set of $\Qp$-valued points of the centralizer $J_{b}$ of $b\sigma$. Moreover, by construction, we have a \emph{$p$-adic uniformization map}
\begin{equation*}
\RZ_{K_p}(G,\mu,b)\rightarrow \sS_{K},
\end{equation*}
which depends on the choice of $z\in \sS_{K,[b]}(\Fpbar)$. See Section \ref{rzsp} for more details. 

\subsection{Main theorem I: Singularities of Rapoport--Zink spaces}\label{mtlr}

For a triple $(G,\mu,\bx)$ as above, Pappas and Zhu introduced in \cite{Pappas2013} the \emph{local model} $M^{\loc}(G,\mu,\bx)$, which is a flat projective $O_{E}$-scheme. We will give a more precise description in Section \ref{lcmd}. We write $\widehat{M}^{\loc}(G,\mu,\bx)$ for the $p$-adic completion of $M^{\loc}(G,\mu,\bx)\otimes_{O_E}O_{\Eb}$. 

\begin{thm}\label{mtlm}(Theorem \ref{llmd})
\emph{There is a diagram
\begin{equation*}
\xymatrix{
&\widetilde{\RZ}_{K_p}(G,\mu,b)\ar[ld]_{\pi_{b}} \ar[rd]^{q_{b}} & \\
\RZ_{K_p}(G,\mu,b)& &\widehat{M}^{\loc}(G,\mu,\bx),}
\end{equation*}
where $\varphi$ is a $(\cG_{\bx}\otimes_{\Zp}O_E)$-torsor, and $\psi$ is $(\cG_{\bx}\otimes_{\Zp}O_E)$-equivariant and formally smooth of relative dimension $\dim(G)$. }
\end{thm}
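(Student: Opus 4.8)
The plan is to construct the space $\widetilde{\RZ}_{K_p}(G,\mu,b)$ as a moduli space of Rapoport--Zink data together with a trivialization of the relevant de Rham (or crystalline) realization, mimicking the PEL-type construction of Rapoport--Zink in \cite[Chapter 3]{Rapoport1996b} but adapted to the Hodge-type situation via Hamacher--Kim's construction. Concretely, a point of $\RZ_{K_p}(G,\mu,b)$ over a $p$-nilpotent $O_{\Eb}$-algebra $R$ consists of a $p$-divisible group (or rather a $(\cG_{\bx}\otimes_{\Zp}O_E)$-shtuka / Dieudonn\'e-type datum) with quasi-isogeny to the fixed framing object and a collection of Hodge tensors; the space $\widetilde{\RZ}_{K_p}(G,\mu,b)$ should parametrize the same data plus a trivialization of the filtered module that is compatible with the tensors, i.e.\ an isomorphism of the underlying $\cG_{\bx}$-bundle with the trivial one. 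Forgetting the trivialization gives $\pi_b$, which is visibly a $(\cG_{\bx}\otimes_{\Zp}O_E)$-torsor (so in particular formally smooth, surjective). Remembering only the Hodge filtration inside the trivialized module gives $q_b$, landing in $\widehat{M}^{\loc}(G,\mu,\bx)$ by the very definition of the local model as the closed subscheme of the appropriate affine Grassmannian cut out by the $\{\mu\}$-condition.

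The key steps, in order, would be: (i) recall the deformation-theoretic description of $\RZ_{K_p}(G,\mu,b)$ from Section \ref{rzsp} in terms of Grothendieck--Messing / Dieudonn\'e theory, and identify the ``relative de Rham" object $M_{\dR}$ with its Hodge filtration and its $\cG_{\bx}$-structure coming from the tensors; (ii) define $\widetilde{\RZ}_{K_p}(G,\mu,b)$ as the functor classifying pairs (point of $\RZ_{K_p}$, trivialization $\beta\colon \cG_{\bx}\otimes R \xrightarrow{\sim} $ the torsor of $\cG_{\bx}$-frames of $M_{\dR}$ respecting tensors), and check it is representable by a formal scheme formally of finite type over $O_{\Eb}$ --- this follows since $\pi_b$ is a torsor under a smooth affine group scheme and $\RZ_{K_p}$ is already known to be such; (iii) verify $\pi_b$ is a $(\cG_{\bx}\otimes_{\Zp}O_E)$-torsor, which is essentially formal from the construction; (iv) construct $q_b$ by sending $(\text{point},\beta)$ to the Hodge filtration transported via $\beta$ into the standard module, land it in the local model using the Pappas--Zhu characterization, and verify $\cG_{\bx}$-equivariance; (v) prove formal smoothness of $q_b$ of relative dimension $\dim G$ by a Grothendieck--Messing crystalline deformation argument: lifts of a point of $\widetilde{\RZ}_{K_p}$ along a nilpotent thickening are in bijection with lifts of the corresponding filtration, which is exactly the tangent/obstruction calculation defining the local model, and the torsor direction contributes the $\Lie$ of $\cG_{\bx}$, i.e.\ $\dim G$, extra dimensions --- here one uses crucially that $p>2$ and the tameness hypothesis so that the tensors deform uniquely (Kim's / Hamacher--Kim's key lemma).

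I expect the main obstacle to be step (v), specifically the compatibility of the Hodge tensors under deformation: one must show that the Grothendieck--Messing theory for the abelian scheme underlying the Kisin--Pappas / Hamacher--Kim construction respects the tensor structure, so that deforming a point of $\RZ_{K_p}$ is ``the same" as deforming the Hodge filtration \emph{inside the $\cG_{\bx}$-torsor}, with no extra obstruction beyond that measured by the local model. This is where the technical hypothesis of Hamacher--Kim and the hypotheses on $G$ (tame, $p\nmid \#\pi_1(G^{\der})$, $p>2$) enter: they guarantee that the relevant crystalline tensors are horizontal and that the ``$\{\mu\}$-admissibility" condition on the filtration matches the defining condition of $M^{\loc}(G,\mu,\bx)$. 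A secondary subtlety is purely bookkeeping: keeping track of the base change from $O_E$ to $O_{\Eb}$ and the $p$-adic completions, and checking that ``formally smooth of relative dimension $\dim G$" is meant in the sense of formal schemes (pro-representability of the deformation functor with tangent space of that dimension). Once the tensor-compatibility of Grothendieck--Messing is in hand, the rest is a formal reprise of Rapoport--Zink's original local model diagram argument.
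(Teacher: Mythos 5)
Your construction of $\widetilde{\RZ}_{K_p}(G,\mu,b)$ (points of $\RZ_{K_p}$ plus a tensor-compatible trivialization of the Dieudonn\'e/de Rham module, with $\pi_b$ forgetting the trivialization and $q_b$ recording the filtration) agrees with the paper's definition, but your route to the two substantive assertions diverges from the paper's and has a genuine gap. First, in step (iv) you claim that $q_b$ lands in $\widehat{M}^{\loc}(G,\mu,\bx)$ ``by the very definition of the local model as the closed subscheme \ldots cut out by the $\{\mu\}$-condition.'' In the parahoric/ramified setting of Pappas--Zhu this is not how the local model is defined: $M^{\loc}(G,\mu,\bx)$ is the flat (scheme-theoretic) closure of the generic-fiber orbit $X_\mu$ inside $M^{\loc}_{2n}\otimes O_E$, and it can be strictly smaller than the naive locus of tensor-compatible filtrations. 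Showing that the filtration coming from a point of the integral model or of $\RZ_{K_p}$ lies in this closure is precisely the content of Kisin--Pappas \cite[Theorem 4.2.7]{Kisin2018}(i); it is a theorem, not a definition, and a Grothendieck--Messing computation does not give it to you. Second, in step (v) your formal-smoothness argument relies on ``the tensors deform uniquely (Kim's key lemma),'' which is the hyperspecial-case mechanism; in the Bruhat--Tits/parahoric case the identification of the tensor-compatible deformation space with the completed local model requires the full Kisin--Pappas machinery, and in the Hamacher--Kim construction the deformation space $\Def(X,\lambda,(t_{\alpha,0}))$ is in fact \emph{defined} via the local model (Section \ref{dfpd}, Case 2), so attempting to re-derive the versality by a direct crystalline argument is at best a re-proof of Kisin--Pappas and at worst circular.

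The paper avoids all of this by pulling back the already-established local model diagram for the Kisin--Pappas integral model along the uniformization map: the functorial comparison isomorphism $c_y\colon \D(X_y)_S\xrightarrow{\cong}\cV_K$ matching $t_\alpha$ with the de Rham tensors makes the square with $\widetilde{\Theta}_{K,(z,j)}$ and $\Theta_{K,(z,j)}$ Cartesian, so (i) of Theorem \ref{llmd} is immediate from Proposition \ref{lmds}(i), $\pi_b$ is a torsor because $\pi$ is, and $q_b$ is formally smooth of relative dimension $\dim(G)$ because $\Theta_{K,(z,j)}$ induces isomorphisms of formal completions $\widehat{\RZ}_{K_p}(G,\mu,b)_y\cong \widehat{\sS}_{K,\Theta_{K,(z,j)}(y)}$ by construction of the Rapoport--Zink space. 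If you want to salvage your write-up, replace steps (iv)--(v) by this pullback argument (or explicitly quote Kisin--Pappas 4.2.7 together with the formal-\'etaleness of $\Theta_{K,(z,j)}$); as it stands, the direct deformation-theoretic path does not close.
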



We give a strategy of the proof of Theorem \ref{mtlm}. By \cite[Theorem 4.2.7]{Kisin2018}, there is a local model diagram on Kisin--Pappas integral models, which asserts that the singularity of $\sS_{K}$ is controlled by $M^{\loc}(G,\mu,\bx)$. Then we prove that the pull-back of such a diagram by the $p$-adic uniformization map, which is already obtained by the construction of $\RZ_{K_p}(G,\mu,b)$, gives a desired one. 


\subsection{Main theorem I\hspace{-0.5mm}I: $p$-adic uniformization}\label{mtgr}

Keep the notations as above, and let $[b_0]\in B(G,\{\mu\})$ be the unique basic element in the sense of \cite{Kottwitz1985}. Then the stratum $\sS_{K,[b_0]}$ is a non-empty closed subset of $\sS_{K}$ by \cite[Theorem 1.3.13 (2)]{Kisin}. Moreover, we can consider the Rapoport--Zink space $\RZ_{K_p}(G,\mu,b_0)$ for a suitable representation $b_0$ of $[b_0]$. We denote by $(\sShat_{K,O_{\Eb}})_{/\sSbar_{K,[b_0]}}$ the formal completion of $\sS_{K,O_{\Eb}}$ along $\sSbar_{K,[b_0]}$. 

\begin{thm}\label{math}(Theorem \ref{paut})
\emph{For $z\in \sSbar_{K,[b_0]}$, there is an isomorphism of $O_{\Eb}$-formal schemes
\begin{equation*}
\bI_{z}(\Q)\backslash \RZ_{K_p}(G,\mu,b_0)\times \bG(\A_{f}^{p})/K^p\cong (\sShat_{K,O_{\Eb}})_{/\sSbar_{K,[b_0]}}. 
\end{equation*}
Here $\bI_{z}$ is the algebraic group over $\Q$ which is anisotropic modulo center satisfying
\begin{equation*}
\bI_{z}\otimes_{\Q}\Ql \cong
\begin{cases}
\bG \otimes_{\Q}\Ql &\text{if }\ell \neq p,\\
J_{b_0}&\text{if }\ell=p. 
\end{cases}
\end{equation*}}
\end{thm}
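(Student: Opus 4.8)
The plan is to follow the strategy of Rapoport--Zink \cite[Chapter 6]{Rapoport1996b}, as adapted to the Hodge-type setting by Howard--Pappas and Kim, and to reduce the statement to a comparison of moduli problems over $\Fpbar$-algebras together with a rigidity argument. First I would set up the embedding: since $(\bG,\bX)$ is of Hodge type, the Kisin--Pappas integral model $\sS_K$ carries a universal abelian scheme $A$ with prime-to-$p$ polarization and level structure, obtained from the finite morphism to a Siegel integral model of parahoric level $\sS_{K'}(\GSp,S^{\pm})$. Restricting everything to the basic stratum $\sSbar_{K,[b_0]}$ and choosing the base point $z$, one gets the principally polarized $p$-divisible group $\mathbb{X} := A[p^\infty]_z$ over $\Fpbar$ with its crystalline Tate tensors, which is exactly the datum framing $\RZ_{K_p}(G,\mu,b_0)$. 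The $p$-adic uniformization map $\RZ_{K_p}(G,\mu,b_0)\to \sS_K$ recalled in Section \ref{rzsp} is built by sending a quasi-isogeny $\rho\colon \mathbb{X}\to \mathcal{X}'$ over an $O_{\Eb}$-algebra to the abelian variety obtained by Serre's tensor/quotient construction, together with the transported level structures away from $p$; this already gives the morphism underlying the asserted isomorphism after passing to the double quotient by $\bI_z(\Q)$ and by $\bG(\A_f^p)/K^p$.

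The core of the argument is then to check that this morphism induces an isomorphism of formal schemes onto the completion $(\sShat_{K,O_{\Eb}})_{/\sSbar_{K,[b_0]}}$. I would do this in two halves. For the special-fiber (set-theoretic / groupoid) statement, I would invoke the description of the basic locus of $\sSbar_K$ in terms of isogeny classes: by the theory of Kisin (\cite{Kisin}, \cite[Theorem 1.3.13]{Kisin}) every point of $\sSbar_{K,[b_0]}(\Fpbar)$ lies in a single isogeny class, the associated Dieudonné module with tensors is isomorphic to that of $\mathbb{X}$, and the automorphism group of the isogeny class is precisely $\bI_z$ with the stated localizations $\bI_z\otimes_{\Q}\Ql \cong \bG\otimes_{\Q}\Ql$ for $\ell\neq p$ and $\bI_z\otimes_{\Q}\Qp \cong J_{b_0}$ (this uses that $[b_0]$ is basic, so $J_{b_0}$ is an inner form of $G$, and that $\bI_z$ is anisotropic modulo center because the basic stratum is proper). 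This yields a bijection on $\Fpbar$-points and, more, an equivalence of groupoids between the source and target. For the infinitesimal statement, I would compare deformation theory on both sides: by Serre--Tate theory the formal completion of $\sS_{K,O_{\Eb}}$ at a point $z$ is the deformation space of $A_z$ with its polarization, level-$K_p$ structure and Tate tensors, while the formal completion of $\RZ_{K_p}(G,\mu,b_0)$ at a point over $z$ is, by its very construction as a closed formal subscheme of the Hodge-type deformation space cut out by the tensors, the same deformation functor. Hence the $p$-adic uniformization map is formally étale, and being a bijection on points and formally étale it is an isomorphism onto the completion of its image, which is $(\sShat_{K,O_{\Eb}})_{/\sSbar_{K,[b_0]}}$ by the set-theoretic step.

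Finally I would address the group-theoretic packaging: the map $\RZ_{K_p}(G,\mu,b_0)\times \bG(\A_f^p)/K^p \to \sS_{K,O_{\Eb}}$ intertwines the $\bI_z(\Q)$-action (diagonal, via its embeddings into $J_{b_0}(\Qp)$ acting on $\RZ$ and into $\bG(\A_f^p)$ acting on the prime-to-$p$ level) with the trivial action on the target, and one checks the induced map on the quotient is injective using that $\bI_z(\Q)$ is exactly the stabilizer group computed above, and surjective onto the basic-locus completion using the single-isogeny-class statement; discreteness/properness of the action (so that the quotient is again a formal scheme) follows because $\bI_z$ is anisotropic modulo center and $K^p$ is sufficiently small, as in \cite[Chapter 6]{Rapoport1996b}. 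The main obstacle I expect is the uniqueness-of-isogeny-class input over $\Fpbar$ together with the precise identification of $\bI_z$ and its adelic localizations in the parahoric (non-hyperspecial) case: this is where one must lean on the deep results of Kisin--Pappas and Kisin on integral models and on the compatibility of the Tate-tensor formalism with quasi-isogenies, rather than on any elementary computation; once that is in hand, the formal-smoothness/étaleness and the quotient formalism are comparatively routine adaptations of the PEL-type arguments.
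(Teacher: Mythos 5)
Your overall architecture --- build $\Theta_{K,(z,j)}$ from the Rapoport--Zink uniformization map together with the prime-to-$p$ Hecke action, check it is a formally \'etale monomorphism by comparing deformation theory on both sides, and then handle the quotient by $\bI_{z}(\Q)$ --- is the same as the paper's, and those parts are indeed dealt with there by the very construction of $\RZ_{K_p}(G,\mu,b_0)$ (its formal completions are the deformation spaces $\Def(X,\lambda,(t_{\alpha}))$, which by Proposition \ref{dfp1} agree with $\widehat{\sS}_{K,z}$) and by citing \cite[Proposition 4.5]{Kim2018b} for representability and the monomorphism property. The genuine gap is in the step you propose simply to ``invoke'': that every point of $\sSbar_{K,[b_0]}(\Fpbar)$ lies in the single isogeny class $\sI_{z}$ of the base point, together with the identifications $\bI_{z}\otimes_{\Q}\Ql\cong \bG\otimes_{\Q}\Ql$ for $\ell\neq p$ and $\bI_{z}\otimes_{\Q}\Qp\cong J_{b_0}$. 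Neither is available as a citation in the setting of this paper: \cite[Theorem 1.3.13]{Kisin} only gives non-emptiness and closedness of the basic stratum (and KMPS work at hyperspecial level), while the mod-$p$ isogeny-class results of \cite{Zhou2020} at parahoric level require quasi-splitness or residual splitness hypotheses that are not assumed here --- $K_p$ is merely the full stabilizer of a point of the building and $G$ need not be quasi-split at $p$. This surjectivity statement is exactly what the paper isolates as the most non-trivial point; it is Proposition \ref{unsj} ($\sI_{z_1}=\sI_{z_2}$ for any two basic points), and its proof needs new input: the special-point-lifting theorem inside a basic isogeny class (Theorem \ref{splf}, extending \cite[Theorem 9.4]{Zhou2020} beyond the quasi-split case, via Lemma \ref{adsp} and the reduction of $b_0$ into an elliptic torus), the centrality of a power of Frobenius for basic $[b_0]$ (Lemma \ref{pwct}), the comparisons of Corollaries \ref{lcif} and \ref{indq}, and a re-run of \cite[Proposition (4.4.13)]{Kisin2017} that deliberately avoids the transitivity of the $\bG(\A_f^p)$-action on $\pi_0(\Sh_{K_p})$, which can fail at this level; Lemma \ref{isps} is what replaces that transitivity.

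Two smaller points. First, the identifications of $\bI_z$ with $\bG$ at $\ell\neq p$ and with $J_{b_0}$ at $p$ are themselves proved in the paper (Corollary \ref{lcif}, following \cite[Corollary 2.3.2]{Kisin2017} and using Lemma \ref{pwct}); they are outputs of the argument, not inputs one may quote, and in particular they feed into the inner-twisting statement (Corollary \ref{indq}) needed to compare $\bI_{z_1}$ with $\bI_{z_2}$. Second, your justification that $\bI_z$ is anisotropic modulo center ``because the basic stratum is proper'' is not the actual argument and is essentially circular in this context; the paper proves $\bI_z\otimes_{\Q}\R$ anisotropic modulo center as in \cite[Corollary (2.1.7)]{Kisin2017}, i.e.\ via positivity of the Rosati involution, which is also what guarantees the discreteness and cocompactness needed for the quotient formalism. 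So while you correctly flag the single-isogeny-class input as the main obstacle, the proposal as written leaves precisely the paper's main innovation unproved.
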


We sketch the proof of Theorem \ref{math}. It is based on the argument of \cite{Kim2018b} and \cite{Kisin2017}. We construct a morphism by a $p$-adic uniformization map and the prime-to-$p$ Hecke action on $\sS_{K_p}:=\varprojlim_{K'^p}\sS_{K'^pK_p}$. The most non-trivial assertion is the surjectivity of the constructed map. The main innovation in our proof is an existence of a special point lifting in an isogeny class, which extends the result of \cite{Zhou2020} in the basic case. By using this, we follow the proof of \cite[Proposition (4.4.13)]{Kisin2017} to prove the desired surjectivity. However, we need trace it much more carefully. Indeed, the proof of \cite[Proposition (4.4.13)]{Kisin2017} uses the transitivity of the action of $\bG(\A_f^p)$ on $\pi_0(\Sh_{K_p})$, which may failure in general. 

\begin{rem}
We use Theorems \ref{mtlr} and \ref{math} to study the structure of the Rapoport--Zink spaces and the basic loci of Shimura varieties for spinor similitude groups with special maximal parahoric level structure (\cite{Oki2020}). It contains the cases in which the groups are not quasi-split at $p$. 
\end{rem}

\begin{oftp}
In Section \ref{pddf}, we study the set of the $\sigma$-conjugacy classes of $G(\Qpb)$, and introduce the notions of local models and deformations of $p$-divisible groups. In Sections \ref{kpim} and \ref{rzsp}, we recall the definitions of Kisin--Pappas integral models and Rapoport--Zink spaces of Hodge type respectively. In Sections \ref{lmdr} and \ref{padc}, we give proofs of Theorems \ref{mtlm} and \ref{math} respectively. 
\end{oftp}

\begin{ack}
I would like to thank my advisor Yoichi Mieda for his constant support and encouragement. I also thank Wansu Kim and Pol van Hoften for many valuable comments. 

This work was carried out with the support from the Program for Leading Graduate Schools, MEXT, Japan. This work was also supported by the JSPS Research Fellowship for Young Scientists and KAKENHI Grant Number 19J21728.
\end{ack}

\begin{nota}
\begin{itemize}
\item \textbf{Total tensor algebras. }Let $R$ be a commutative ring, and $M$ an $R$-module. We write $M^{*}$ for the dual module of $M$. Moreover, denote by $M^{\otimes}$ the disjoint union of $R$-modules that can be formed from $M$ using the finite operations of taking duals, tensor products, symmetric products and alternating products. We also use these notations for the same objects for modules over sheaves. 
\item \textbf{Symplectic similitude group. } For $n\in \Zpn$ and a commutative ring $R$, let $(V_{2n,R},\psi_{2n})$ be the symplectic space over $R$ whose Gram matrix with respect to the standard basis is
\begin{equation*}
{\begin{pmatrix}
0& J_n\\
-J_n &0
\end{pmatrix}}, 
\end{equation*}
where $J_n$ is the anti-diagonal matrix of size $n$ that has $1$ as every non-zero entry. We define an algebraic group $\GSp_{2n,R}$ over $R$ as the subgroup of $\GL(V_{2n,R})$ which respects $\psi_{2n}$ up to an invertible scalar. 
\item \textbf{Fundamental groups. } For a reductive connected group $G$ over a field $k$, put $X_{*}(G):=\Hom_{\overline{k}}(\G_{m,\overline{k}},G\otimes_{k}\overline{k})$, and write $\pi_1(G)$ for the Borovoi's algebraic fundamental group of $G$. Note that there is a canonical map $X_{*}(G)\rightarrow \pi_1(G)$, which factors through the set of $G(\overline{k})$-conjugacy classes of $X_{*}(G)$. 
\item \textbf{Bruhat--Tits buildings. } Let $G$ be a reductive connected group over $\Qp$. We denote by $\cB(G,\Qp)$ the (extended) Bruhat--Tits building of $G$, and for $\bx\in \cB(G,\Qp)$, let $\cG_{\bx}$ be the smooth group scheme over $\Zp$ associated to $\bx$. Moreover, we write $\cG_{\bx}^{\circ}$ for the identity component of $\cG_{\bx}$. 
\end{itemize}
\end{nota}

\section{$p$-divisible groups and their deformations}\label{pddf}

In this section, we prepare the notions relating with two main results which will be proved in this article. 

\subsection{$\sigma$-conjugacy classes}\label{gisc}

Let $G$ be a connected reductive group over $\Qp$. We define $B(G)$ as the set of $\sigma$-conjugacy classes in $G(\Qpb)$, that is, the set of 
\begin{equation*}
[b]:=\{gb\sigma(g)^{-1}\in G(\Qpb)\mid g\in G(\Qpb)\}
\end{equation*}
for all $b\in G(\Qpb)$. 

First, we recall \cite[Theorem 1.15]{Rapoport1996a}. Fix a maximal torus $T_0$ of $G$, and let $W$ be the absolute Weyl group of $(G,T_0)$. We denote by $\Gamma$ the absolute Galois group of $\Qp$. Put $\cN(G):=(X_{*}(T_0)\otimes_{\Z}\Q/W)^{\Gamma}$. Then implies that there is a unique functorial map
\begin{equation*}
\cN(G)\rightarrow (\pi_1(G)\otimes_{\Z}\Q)^{\Gamma}
\end{equation*}
which gives the identity map when $G$ is a torus. On the other hand, we write $N$ for the composite
\begin{equation*}
\pi_1(G)_{\Gamma}\rightarrow (\pi_1(G)\otimes_{\Z}\Q)_{\Gamma}\rightarrow (\pi_1(G)\otimes_{\Z}\Q)^{\Gamma}, 
\end{equation*}
where the second map is as in \cite[p.162]{Rapoport1996a}, that is, taking avarages over $\Gamma$-orbits. 

On the other hand, there are maps
\begin{equation*}
\overline{\nu}_{G}\colon B(G)\rightarrow \cN(G),\quad \kappa_{G}\colon B(G)\rightarrow \pi_1(G)_{\Gamma},
\end{equation*}
that are functorial with respect to $G$. 

\begin{prop}\label{ntkt}(\cite[Theorem 1.15]{Rapoport1996a})
\emph{The following diagram commutes: 
\begin{equation*}
\xymatrix{
B(G)\ar[r]^{\kappa_G} \ar[d]^{\overline{\nu}_G} &\pi_1(G)_{\Gamma} \ar[d] \\
\cN(G) \ar[r] &(\pi_1(G)\otimes_{\Z}\Q)^{\Gamma},}
\end{equation*}}
\end{prop}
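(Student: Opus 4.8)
The plan is to reduce the assertion to the case of tori by functoriality, where every map in the diagram becomes completely explicit, and then to bootstrap back to a general $G$.

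\emph{The torus case.} Suppose first that $G=T$ is a torus. Then $T$ has no coroots, so $\pi_1(T)=X_*(T)$, the set $\cN(T)$ is simply $(X_*(T)\otimes_{\Z}\Q)^{\Gamma}$, and the bottom horizontal map of the diagram is the identity. The Kottwitz homomorphism $\kappa_T\colon B(T)\to X_*(T)_{\Gamma}$ is an isomorphism, under which the class of $b=\mu(p)$ for $\mu\in X_*(T)$ corresponds to the image $\overline{\mu}$ of $\mu$. From the construction of the slope homomorphism one reads off that $\overline{\nu}_T([b])\in(X_*(T)\otimes_{\Z}\Q)^{\Gamma}$ is the average over the $\Gamma$-orbit of $\mu$, i.e.\ $\frac{1}{r}\sum_{i=0}^{r-1}\sigma^{i}\mu$ for $r$ large enough that this sum is $\Gamma$-invariant. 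But this average is exactly $N(\overline{\mu})$ by definition of $N$, so the square commutes for $T$.

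\emph{Reduction to tori.} Let now $[b]\in B(G)$ be arbitrary. Since all four maps in the diagram — and hence the two composites $B(G)\to(\pi_1(G)\otimes_{\Z}\Q)^{\Gamma}$ — are functorial in $G$, it suffices to produce a maximal torus $T$ of $G$ over $\Qp$ together with a class $[b_T]\in B(T)$ mapping to $[b]$ under $B(T)\to B(G)$. I would do this in two steps. First, reduce to the basic case: replacing $b$ by a decent $\sigma$-conjugate, the Newton cocharacter $\nu_b$ factors through the centre of a Levi subgroup $M\subseteq G$ defined over $\Qp$ with $b\in M(\Qpb)$, and $b$ is basic in $M$ with $\overline{\nu}_M([b])=\nu_b$; since $\pi_1(M)\twoheadrightarrow\pi_1(G)$ and the maps $\cN(M)\to\cN(G)$ and $N$ are compatible with the transition maps for $M\hookrightarrow G$, the commutativity of the square for $(M,[b])$ implies it for $(G,[b])$. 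Thus we may assume $b$ is basic. Second, for basic $b$ one chooses a maximal torus $T\subseteq G$ over $\Qp$ and $b_T\in T(\Qpb)$ with $b_T$ $\sigma$-conjugate to $b$, using the structure theory of basic classes (the inner form $J_b$ of $G$ admits a maximal torus over $\Qp$, which transfers into $G$ and carries a representative of $[b]$); applying the torus case and functoriality for $T\hookrightarrow G$ then finishes the proof.

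The main obstacle is exactly this reduction to tori: producing the Levi $M$ over $\Qp$ in which $b$ is basic, and then the maximal torus $T$ with a representative of $[b]$, both rely on the finer structure theory of $\sigma$-conjugacy classes from \cite{Kottwitz1985} and \cite{Rapoport1996a}, and one must check carefully that the bottom map $\cN(-)\to(\pi_1(-)\otimes_{\Z}\Q)^{\Gamma}$ — which is only pinned down by functoriality together with being the identity on tori — and the averaging map $N$ are genuinely compatible along $T\hookrightarrow M\hookrightarrow G$. By contrast, once one has landed on a torus, the identity $\overline{\nu}_T=N\circ\kappa_T$ is a direct unwinding of the definitions.
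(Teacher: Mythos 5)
The paper itself gives no argument for this proposition: it is quoted directly from \cite[Theorem 1.15]{Rapoport1996a}, so there is no internal proof to measure yours against. What you propose is essentially the standard proof from that reference, and it is sound: all four maps are functorial (for the bottom map this is exactly its defining characterization recalled just before the statement, and $N$ is visibly functorial), the general class is reduced to a basic class in the Levi $M=\mathrm{Cent}(\nu_b)$ after choosing a representative with $\nu_b$ defined over $\Qp$, and the basic case is reduced to an elliptic maximal torus --- for which you could simply invoke Proposition \ref{bseq}(ii) of this paper (\cite[5.2]{Kottwitz1985}) instead of re-deriving it through $J_b$. The one imprecision is in the torus computation: for $b=\mu(p)$ the Newton point $\overline{\nu}_T([b])$ is the average of $\mu$ over its full $\Gamma$-orbit, not the $\sigma$-orbit average $\frac{1}{r}\sum_{i=0}^{r-1}\sigma^{i}\mu$; the two agree only when $\Gamma$ acts on $X_{*}(T)$ through an unramified quotient, and for ramified $T$ the expression $\sigma^{i}\mu$ is not even canonically defined. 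With that corrected, the torus identity $\overline{\nu}_T=N\circ\kappa_T$ is indeed a direct unwinding of the definition of $N$ as the $\Gamma$-orbit averaging map, and the rest of your reduction goes through.
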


\begin{dfn}
We say that $[b]$ is \emph{basic} if $\overline{\nu}_G([b])$ factors through the center of $G$. 
\end{dfn}

Let $B(G)_{\bs}$ be the set of all basic elements of $B(G)$. We recall some properties on basic $\sigma$-conjugacy classes. For $b\in G(\Qpb)$, let $J_{b}$ be an algebraic group over $\Qp$ satisfying
\begin{equation*}
J_{b}(R)=\{g\in G(R\otimes_{\Qp}\Qpb)\mid gb=b\sigma(g)\}
\end{equation*}
for any $\Qp$-algebra $R$. Then $J_{b}$ is determined by $[b]$ up to isomorphism. 

\begin{prop}\label{bseq}
\emph{For $[b]\in B(G)$, the following are equivalent: 
\begin{enumerate}
\item $[b]$ is basic,
\item $[b]$ lies in the image of the map $B(T)\rightarrow B(G)$ for some (or any) elliptic maximal torus $T$ of $G$,
\item $J_{b}$ is an inner form of $G$ for some (or all) $b\in [b]$. 
\end{enumerate}}
\end{prop}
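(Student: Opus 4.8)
The plan is to prove the equivalences (i)$\Leftrightarrow$(iii) and (i)$\Leftrightarrow$(ii) separately, invoking Kottwitz's theory of isocrystals with additional structure \cite{Kottwitz1985}. First observe that $J_{b}$ depends on $[b]$ only up to isomorphism, so in (iii) the phrases ``for some $b$'' and ``for all $b$'' coincide; likewise it suffices to treat (ii) for one, equivalently every, elliptic maximal torus. Accordingly I may replace $b$ by any $\sigma$-conjugate, and will take a decent representative in good position relative to its Newton cocharacter $\nu_{b}$, so that $b$ lies in $M(\Qpb)$ and is basic there, where $M$ is the centralizer of $\nu_{b}$ in $G$ (a Levi subgroup).

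For (i)$\Leftrightarrow$(iii): by Kottwitz's description of $J_{b}$, for such a representative $J_{b}$ is an inner form of $M$. Hence, if $[b]$ is basic then $\nu_{b}$ is central, so $M=G$ and $J_{b}$ is an inner form of $G$; conversely, if $J_{b}$ is an inner form of $G$ then $\dim M=\dim J_{b}=\dim G$, which forces the Levi subgroup $M$ to equal $G$, so $\nu_{b}$ is central and $[b]$ is basic. I expect this equivalence to be the technical heart of the proof, since it relies on the identification of $J_{b}$ with an inner form of the centralizer $M_{\nu_{b}}$, which in turn requires the reduction to a good representative; once that input is granted, the remaining step is a dimension count.

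For (i)$\Leftrightarrow$(ii): I will use two facts. \emph{(A)} If $T$ is an elliptic maximal torus, then the image in $B(G)$ of every class in $B(T)$ is basic: by functoriality of $\overline{\nu}$ such an image is represented by the image of some $\overline{\nu}_{T}([b_{T}])\in(X_{*}(T)\otimes_{\Z}\Q)^{\Gamma}$, and since $T/Z(G)$ is anisotropic over $\Qp$ we have $(X_{*}(T/Z(G))\otimes_{\Z}\Q)^{\Gamma}=0$, so the exact sequence $0\to X_{*}(Z(G)^{\circ})\otimes_{\Z}\Q\to X_{*}(T)\otimes_{\Z}\Q\to X_{*}(T/Z(G))\otimes_{\Z}\Q\to 0$ identifies $(X_{*}(T)\otimes_{\Z}\Q)^{\Gamma}$ with $(X_{*}(Z(G)^{\circ})\otimes_{\Z}\Q)^{\Gamma}$; hence the Newton point is central. \emph{(B)} For any maximal torus $T$ the map $B(T)\to B(G)_{\bs}$ is surjective: recall that $\kappa_{G}$ restricts to a bijection $B(G)_{\bs}\xrightarrow{\sim}\pi_1(G)_{\Gamma}$ and that $\kappa_{T}\colon B(T)\xrightarrow{\sim}X_{*}(T)_{\Gamma}$ is compatible with $B(T)\to B(G)$ and the induced map $X_{*}(T)_{\Gamma}\to\pi_1(G)_{\Gamma}$, while the canonical surjection $X_{*}(T)\twoheadrightarrow\pi_1(G)$ (reduction modulo the coroot lattice) stays surjective after applying the right-exact functor $(-)_{\Gamma}$. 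Given \emph{(A)}, condition (ii) immediately yields (i). Conversely, if $[b]$ is basic, fix an elliptic maximal torus $T$; by \emph{(B)} there is $[b_{T}]\in B(T)$ whose image $[b']\in B(G)$ satisfies $\kappa_{G}([b'])=\kappa_{G}([b])$, and by \emph{(A)} the class $[b']$ is basic, so the injectivity of $\kappa_{G}$ on $B(G)_{\bs}$ gives $[b']=[b]$, i.e.\ $[b]$ lies in the image of $B(T)\to B(G)$. This establishes (ii).
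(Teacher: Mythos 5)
Your argument is correct, but it takes a different route from the paper: the paper disposes of the proposition in two lines by citing Kottwitz directly ((i)$\Leftrightarrow$(ii) is \cite[5.2]{Kottwitz1985}, (i)$\Leftrightarrow$(iii) is \cite[Proposition 5.3]{Kottwitz1985}), whereas you reprove both equivalences. For (i)$\Leftrightarrow$(iii) you reduce to a good (decent) representative and invoke the identification of $J_{b}$ with an inner form of the centralizer $M=\mathrm{Cent}_{G}(\nu_{b})$, then finish by a dimension count; note that this identification is essentially the content of the very result of Kottwitz the paper cites, so your gain here is expository rather than logical. For (i)$\Leftrightarrow$(ii) your argument is genuinely self-contained given the inputs already stated in the paper: ellipticity of $T$ forces $(X_{*}(T)\otimes_{\Z}\Q)^{\Gamma}=(X_{*}(Z(G)^{\circ})\otimes_{\Z}\Q)^{\Gamma}$, so images of $B(T)$ are basic, and the converse follows from $\kappa_{T}\colon B(T)\xrightarrow{\sim}X_{*}(T)_{\Gamma}$, right-exactness of coinvariants applied to $X_{*}(T)\twoheadrightarrow\pi_{1}(G)$, and the injectivity of $\kappa_{G}$ on $B(G)_{\bs}$ (Proposition \ref{bskt}). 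Two small points of precision: your statement (B) as written (``$B(T)\to B(G)_{\bs}$ is surjective for any maximal torus'') is misleading, since for non-elliptic $T$ the image of $B(T)$ need not consist of basic classes; what you actually prove and use is that the composite $B(T)\to B(G)\xrightarrow{\kappa_{G}}\pi_{1}(G)_{\Gamma}$ is surjective, which combined with (A) for elliptic $T$ gives the surjection onto $B(G)_{\bs}$. You also implicitly use the existence of an elliptic maximal torus of $G$ over $\Qp$ for the ``some'' direction; this is standard but worth flagging. What your approach buys is a transparent proof depending only on facts quoted elsewhere in the paper; what the paper's citation buys is brevity and avoidance of re-deriving Kottwitz's structure theory of $J_{b}$.
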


\begin{proof}
The equivalence between (i) and (ii) is \cite[5.2]{Kottwitz1985}. 
On the other hand, the equivalence between (i) and (iii) is \cite[Proposition 5.3]{Kottwitz1985}. 
\end{proof}

\begin{prop}\label{bskt}(\cite[Theorem 1.15 (i)]{Rapoport1996a})
\emph{The Kottwitz map $\kappa_G$ induces a bijection
\begin{equation*}
B(G)_{\bs}\cong \pi_1(G)_{\Gamma}. 
\end{equation*}}
\end{prop}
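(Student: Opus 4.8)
The plan is to reduce the assertion to the case of tori by means of an elliptic maximal torus, and then to treat surjectivity and injectivity separately, using the two characterisations of basicness in Proposition~\ref{bseq} together with the commutative square of Proposition~\ref{ntkt}. First I would fix an elliptic maximal torus $T$ of $G$ (such a torus exists since $\Qp$ is a $p$-adic field) and record three facts: $\pi_1(T)=X_{*}(T)$; the quotient map $X_{*}(T)\twoheadrightarrow\pi_1(G)$ has kernel the coroot lattice $Q^{\vee}$, so that $X_{*}(T)_{\Gamma}\to\pi_1(G)_{\Gamma}$ is surjective by right exactness of $\Gamma$-coinvariants; and the torus case, namely that $\kappa_{T}\colon B(T)\to\pi_1(T)_{\Gamma}=X_{*}(T)_{\Gamma}$ is a bijection (due to Kottwitz). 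I would use the functoriality of $\overline{\nu}_{G}$ and $\kappa_{G}$ throughout.

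For surjectivity, functoriality identifies the composite $B(T)\to B(G)\xrightarrow{\kappa_{G}}\pi_1(G)_{\Gamma}$ with $B(T)\xrightarrow{\kappa_{T}}X_{*}(T)_{\Gamma}\to\pi_1(G)_{\Gamma}$, which is surjective; and by Proposition~\ref{bseq}, (ii)$\Rightarrow$(i), the image of $B(T)\to B(G)$ lies in $B(G)_{\bs}$. Hence $\kappa_{G}$ already maps $B(G)_{\bs}$ onto $\pi_1(G)_{\Gamma}$.

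For injectivity, let $[b_{1}],[b_{2}]\in B(G)_{\bs}$ with $\kappa_{G}([b_{1}])=\kappa_{G}([b_{2}])$. By Proposition~\ref{bseq}, (i)$\Rightarrow$(ii) applied to the same torus $T$, I can lift $[b_{1}],[b_{2}]$ to classes $[\beta_{1}],[\beta_{2}]\in B(T)$; by functoriality $\kappa_{T}([\beta_{1}])$ and $\kappa_{T}([\beta_{2}])$ then have the same image in $\pi_1(G)_{\Gamma}$. (One also checks via Proposition~\ref{ntkt} that the Newton points coincide: the images of $\overline{\nu}_{G}([b_{i}])$ in $(\pi_1(G)\otimes_{\Z}\Q)^{\Gamma}$ are both $N(\kappa_{G}([b_{i}]))$, and $\cN(G)\to(\pi_1(G)\otimes_{\Z}\Q)^{\Gamma}$ is injective on \emph{central} classes, because a nonzero central fractional cocharacter pairs trivially with all roots and hence is not in the $\Q$-span of $Q^{\vee}$, the Cartan matrix being invertible; and $\overline{\nu}_{G}([b_{i}])$ is central since $[b_{i}]$ is basic.) It therefore remains to show that two classes of $B(T)$ with the same image in $\pi_1(G)_{\Gamma}$ already have the same image in $B(G)$.

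I expect this last point to be the main obstacle: it is the injectivity of the combined invariant $(\overline{\nu}_{G},\kappa_{G})$ on $B(G)$ restricted to classes coming from $T$, and this is where Kottwitz's comparison of $B(G)$ with data on $\widehat{G}$ really enters. I would handle it by first twisting $[b_{1}]$ and $[b_{2}]$ by a basic class whose Newton point equals their common (central) Newton point, reducing to the case $\overline{\nu}_{G}([b_{1}])=\overline{\nu}_{G}([b_{2}])=0$; for such classes $b$ can be taken in $G(\O_{\Qpb})$, and $\{[b]\in B(G):\overline{\nu}_{G}([b])=0\}$ is then analysed by descent along the $\sigma$-action on $G(\O_{\Qpb})$ and Lang's theorem over $\Fpbar$, which identifies it, compatibly with $\kappa_{G}$, with the relevant subquotient of $\pi_1(G)_{\Gamma}$, as in \cite[\S 5]{Kottwitz1985}. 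Combined with the torus case, this yields the asserted bijection.
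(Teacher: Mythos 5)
The paper does not actually prove this proposition: it is quoted verbatim from \cite[Theorem 1.15 (i)]{Rapoport1996a} (i.e.\ Kottwitz's classification of basic classes), so there is no internal argument to compare with, and your proposal is an attempt to reconstruct the standard proof. The surjectivity half is correct and complete as written: $\kappa_T$ is bijective for tori, $X_*(T)_{\Gamma}\to\pi_1(G)_{\Gamma}$ is surjective by right exactness, and Proposition \ref{bseq} (ii)$\Rightarrow$(i) puts the image of $B(T)\to B(G)$ inside $B(G)_{\bs}$. The reduction of injectivity is also sound, and your parenthetical argument that $\cN(G)\to(\pi_1(G)\otimes_{\Z}\Q)^{\Gamma}$ is injective on central classes (Cartan matrix nondegenerate on the coroot span) is correct. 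A slightly cleaner reduction is to twist by $[b_1]$ itself: under $B(G)\cong B(J_{b_1})$ and $\pi_1(J_{b_1})_{\Gamma}\cong\pi_1(G)_{\Gamma}$ both $\overline{\nu}$ and $\kappa$ of the second class become trivial, so injectivity becomes the single statement that $\overline{\nu}_b=0$ and $\kappa_G([b])=0$ force $[b]=[1]$, for every inner form of $G$.

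The genuine gap is in your final paragraph, and it sits exactly at that statement. The claim that such a $b$ ``can be taken in $G(\O_{\Qpb})$'' presupposes a distinguished smooth $\Zp$-model, which $G$ need not have in this paper's setting (only tameness and $p\nmid\#\pi_1(G^{\der})$ are assumed, so $G$ may be ramified and have no hyperspecial subgroup). The usable statement is that $b$ can be $\sigma$-conjugated into $\cG_{\bx}^{\circ}(\Zpb)$ for a \emph{connected} parahoric model, and this is not a formal consequence of $\overline{\nu}_b=0$: boundedness gives, via the Bruhat--Tits fixed point theorem, a fixed point of $b\sigma$ in the building over $\Qpb$, but not a rational one, and for the full stabilizer $\cG_{\bx}(\Zpb)$ the Lang-type vanishing of $\rH^1(\langle\sigma\rangle,-)$ can fail --- its failure is precisely what torsion in $\pi_1(G)_{\Gamma}$ measures, so the choice of subgroup is not innocent. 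Producing a representative in $\cG_{\bx}^{\circ}(\Zpb)$ uses $\kappa_G([b])=0$ in an essential way and is in substance a non-emptiness statement for the corresponding affine Deligne--Lusztig set (cf.\ \cite{He2016}); alternatively one argues as Kottwitz does, identifying $\{[b]:\overline{\nu}_b=0\}$ with $\rH^1(\Qp,G)$ and computing the latter as the torsion of $\pi_1(G)_{\Gamma}$ via Kneser's vanishing for simply connected groups. With either input, plus Lang's theorem giving $\rH^1(\langle\sigma\rangle,\cG_{\bx}^{\circ}(\Zpb))=1$, your argument closes; as written, the decisive step is only delegated to \cite[\S 5]{Kottwitz1985}, which is in effect what the paper's citation already does.
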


For $\mu \in X_{*}(G)$, we denote by $[b_{\bs}(\mu)]\in B(G)_{\bs}$ be the unique element satisfying 
\begin{equation*}
\kappa_{G}([b_{\bs}(\mu)])=\mu^{\diamond},
\end{equation*}
where $\mu^{\diamond}$ is the image of $\mu$ under the canonical map $X_{*}(G)\rightarrow \pi_1(G)$. Let $B(G,\{\mu\})$ be the subset of $B(G)$ defined as \cite[6.2]{Kottwitz1997}. Then \cite[6.4]{Kottwitz1997} asserts that $B(G,\{\mu\})$ contains the unique basic element $[b_{\bs}(\mu)]$. 

\begin{lem}\label{adsp}
\emph{Let $[b]\in B(G,\{\mu\})$ be basic, and $T$ an elliptic maximal torus of $G$. Then there is $g\in G(\Qpbar)$ satisfying the following: 
\begin{enumerate}
\item $\mu':=\Ad(g)\circ \mu \in X_{*}(T)$, 
\item $\overline{\mu}'=\nu_{b}$,
\item the image of $[b_{\bs}(\mu')]$ under $B(T)\rightarrow B(G)$ equals $[b]$. 
\end{enumerate}}
\end{lem}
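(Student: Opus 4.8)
The plan is to construct $g$ by a standard geometric argument and then to observe that properties (ii) and (iii) come out automatically, the only genuine input being the ellipticity of $T$ together with Propositions \ref{ntkt} and \ref{bskt}. To produce $g$: since $\{\mu\}$ is a $G(\Qpbar)$-conjugacy class of cocharacters and $T\otimes_{\Qp}\Qpbar$ is a maximal torus of $G\otimes_{\Qp}\Qpbar$, the image of a fixed representative of $\{\mu\}$ lies in some maximal torus of $G\otimes_{\Qp}\Qpbar$, and all maximal tori there are $G(\Qpbar)$-conjugate; hence there is $g\in G(\Qpbar)$ with $\mu':=\Ad(g)\circ\mu\in X_{*}(T)$, which is (i). In fact any such $g$ will turn out to work.

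For (iii), note that $\pi_1(T)=X_{*}(T)$, so $[b_{\bs}(\mu')]\in B(T)=B(T)_{\bs}$ is, through the isomorphism $\kappa_T\colon B(T)\cong X_{*}(T)_{\Gamma}$, the class of $\mu'$. By functoriality of $\kappa$, the image $[b']$ of $[b_{\bs}(\mu')]$ in $B(G)$ has $\kappa_G([b'])$ equal to the image of $\mu'$ in $\pi_1(G)_{\Gamma}$; since $\mu'$ and $\mu$ are $G(\Qpbar)$-conjugate they have the same image $\mu^{\diamond}$ in $\pi_1(G)$, and since $[b]\in B(G,\{\mu\})$ one has $\kappa_G([b])=\mu^{\diamond}$ by \cite[6.2]{Kottwitz1997}, whence $\kappa_G([b'])=\kappa_G([b])$. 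Moreover $[b']$ is basic: by functoriality of $\overline{\nu}$, the element $\overline{\nu}_G([b'])$ is the image of $\overline{\nu}_T([b_{\bs}(\mu')])=\overline{\mu'}$ under $\cN(T)\to\cN(G)$, and because $T$ is elliptic the group $(X_{*}(T)\otimes_{\Z}\Q)^{\Gamma}$ consists of central cocharacters, so $\overline{\mu'}$ is central. Then Proposition \ref{bskt} forces $[b']=[b]$.

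Property (ii) falls out of the same computation. On one hand $\nu_b=\overline{\nu}_G([b])$, and as $[b]$ is basic Proposition \ref{ntkt} yields $\overline{\nu}_G([b])=N(\kappa_G([b]))=N(\mu^{\diamond})$ in $(\pi_1(G)\otimes_{\Z}\Q)^{\Gamma}$. On the other hand $\overline{\mu'}\in(X_{*}(T)\otimes_{\Z}\Q)^{\Gamma}$ is the $\Gamma$-average of $\mu'$, and pushing it through the $\Gamma$-equivariant surjection $X_{*}(T)\otimes_{\Z}\Q\to\pi_1(G)\otimes_{\Z}\Q$, compatibly with the averaging maps, shows that its image in $(\pi_1(G)\otimes_{\Z}\Q)^{\Gamma}$ equals $N((\mu')^{\diamond})=N(\mu^{\diamond})$; under the identification of central Newton points with their images in $(\pi_1(G)\otimes_{\Z}\Q)^{\Gamma}$ this reads $\overline{\mu'}=\nu_b$.

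The step deserving care is the use of ellipticity: it is precisely what makes the Galois-average of any cocharacter of $T$ central, hence dependent only on its image $\mu^{\diamond}$ in $\pi_1(G)$, and this is what simultaneously forces (ii) and the basic-ness of $[b']$ used in (iii). I would also double-check that the chosen normalizations make $\overline{\mu'}$ equal to $\nu_b$ on the nose rather than merely proportional, and that the canonical map $\cN(G)\to(\pi_1(G)\otimes_{\Z}\Q)^{\Gamma}$ is injective on the central Newton points so that the identity in $(\pi_1(G)\otimes_{\Z}\Q)^{\Gamma}$ can be read back in $\cN(G)$.
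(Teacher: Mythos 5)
Your proof is correct and follows essentially the same route as the paper: choose $g$ by conjugacy of maximal tori, use the functoriality of $\kappa$ together with $\kappa_G([b])=\mu^{\diamond}$ and Proposition \ref{bskt} to identify the image of $[b_{\bs}(\mu')]$ with $[b]$, and use ellipticity of $T$ plus Proposition \ref{ntkt} to handle the Newton point. The only cosmetic difference is that you verify basicness of $[b']$ directly from the centrality of the $\Gamma$-average $\overline{\mu}'$, where the paper simply cites Proposition \ref{bseq}(ii); the injectivity on central Newton points that you flag does hold, since $X_{*}(Z^{\circ})\otimes_{\Z}\Q$ meets the coroot span trivially.
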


\begin{proof}
This is exactly \cite[Lemma 1.1.8]{Kisin}. However, we give a proof for reader's convenience. 

Take $g\in G(\Qpbar)$ satisfying (i), and consider $[b_{\bs}(\mu')]\in B(T)$. Then Proposition \ref{ntkt} for the torus $T$ implies (ii). Let $[b']\in B(G)$ be the image of $[b_{\bs}(\mu')]$ under $B(T)\rightarrow B(G)$. Then $[b']$ lies in $B(G)_{\bs}$ by Proposition \ref{bseq}, and we have $\kappa_G([b'])=(\mu')^{\diamond}=\mu^{\diamond}$ by the functoriality of $\kappa_{G}$ for the natural inclusion $T\subset G$. Hence Proposition \ref{bskt} implies $[b']=[b]$. 
\end{proof}

\subsection{Local models}\label{lcmd}

Here we recall the theory of local models, which will control the singularities of integral models of Shimura varieties and Rapoport--Zink spaces. 

For a pair $(G,\{\mu\})$ of reductive connected group over $\Qp$ and a $G(\Qpbar)$-conjugacy class of a cocharacter of $G$, we call the field of definition of $\{\mu\}$ as the \emph{local reflex field} of $(G,\{\mu\})$. Note that it is a finite extension of $\Qp$. 

\begin{dfn}
A \emph{local model triple} is a triple $(G,\mu,\bx)$, where
\begin{itemize}
\item $G$ is a reductive connected group over $\Qp$,
\item $\mu \colon \G_{m,\Qpbar}\rightarrow G\otimes_{\Qp}\Qpbar$ is a minuscule cocharacter,
\item $\bx \in \cB(G,\Qp)$. 
\end{itemize}
\end{dfn}

In the sequel of this paper, we denote by $\mu_{2n}$ the minuscule cocharacter
\begin{equation*}
\G_m \rightarrow \GSp_{2n,\Qp};t\mapsto \diag(t^{-1,(n)},1^{(n)}),
\end{equation*}
where $a^{(n)}$ is $n$-th copies of $a$. Moreover, we write $\bx_{2n}\in \cB(\GSp_{2n},\Qp)$ for a unique hyperspecial vertex satisfying $\cG_{\bx_{2n}}=\GSp_{2n,\Zp}$. 

Under the above notations, we further assume the following: 
\begin{itemize}
\item[(I)] $G$ splits over a tamely ramified extension of $\Qp$ and $p\nmid \pi_1(G^{\der})$,
\item[(I\hspace{-0.5mm}I)] there is a closed immersion $i\colon G\hookrightarrow \GSp_{2m,\Qp}$ satisfying $i\circ \mu=\mu_{2n}$, $i(\bx)=\bx_{2n}$ and $i(G)$ contains all scalar matrices. 
\end{itemize}

Let $M^{\loc}(G,\mu,\bx)$ be the local model defined as \cite{Pappas2013}. It is a projective $O_{E}$-scheme equipped with an action of $\cG_{\bx,O_E}=\cG_{\bx}\otimes_{\Zp}O_E$. We have a description of $M^{\loc}(G,\mu,\bx)$ as follows. 

\textbf{Case 1. $(G,\mu,\bx)=(\GSp_{2n},\mu_{2n},\bx_{2n})$. }
In this case, the assumptions (I) and (I\hspace{-0.5mm}I) are satisfied. Then \cite[Proposition 7.3]{Pappas2013} implies that there is an isomorphism between $M^{\loc}(G,\mu,\bx)$ and the moduli space of maximally totally isotropic subspaces of $V_{2n,\Zp}^{*}$. 

In the sequel, we simply write $M^{\loc}(\GSp_{2n},\mu_{2n},\bx_{2n})$ for $M^{\loc}_{2n}$. 

\textbf{Case 2. General case. }
We denote by $P_{\mu^{-1}}$ the parabolic subgroup of $G$ associated to $\mu^{-1}$ in the sense of \cite[2.1.1]{Kisin2018}, and write $X_{\mu}$ for the $G$-homogeneous variety attached to $P_{\mu^{-1}}$. It is a smooth projective variety defined over $E$ that satisfies $X_{\mu}(\Qpbar)=G(\Qpbar)/P_{\mu^{-1}}(\Qpbar)$. Take a closed immersion $i\colon G\hookrightarrow \GSp_{2n,\Qp}$ as in (I\hspace{-0.5mm}I). Then it induces a closed immersion of $E$-scheme
\begin{equation*}
X_{\mu}\hookrightarrow M^{\loc}(\GSp_{2n},\mu_{2n},\bx_{2n})\otimes_{\Zp}E. 
\end{equation*}
Now we define $M_{G,\mu,\bx}$ as the scheme-theoretic closure of $X_{\mu}$ in $M^{\loc}(\GSp_{2n},\mu_{2n},\bx_{2n})\otimes_{\Zp}O_E$. Then \cite[4.1.5]{Kisin2018} asserts that there is a $\cG_{\bx,O_E}$-equivariant isomorphism $M^{\loc}(G,\mu,\bx)\cong M_{G,\mu,\bx}$. In particular, we obtain a closed immersion
\begin{equation*}
M^{\loc}(G,\mu,\bx)\hookrightarrow M^{\loc}_{2n}\otimes_{\Zp}O_{E}. 
\end{equation*}

\subsection{Deformations of $p$-divisible groups with tensors}\label{dfpd}

We keep the notations in Section \ref{lcmd}, Let $\nilp_{\Eb}$ be the category of schemes over $O_{\Eb}$ in which $p$ is locally nilpotent. For $S\in \nilp_{O_{\Eb}}$, we set $\overline{S}:=S\otimes_{\Zp}\Fp$. 

For a $p$-divisible group $X$ over $S\in \nilp_{O_{\Eb}}$, we write $\D(X)$ for the \emph{contravariant} Dieudonn{\'e} crystal of $X$. Then the relative Frobenius on $X\times_{S}\overline{S}$ induces the Frobenius map
\begin{equation*}
F\colon \sigma^{*}\D(X)\rightarrow \D(X)
\end{equation*}
In particular, if $S=\spec \Fpbar$, we define the (contravariant) Dieudonn{\'e} module of $X$ to be
\begin{equation*}
\D(X)(\Zpb):=\varprojlim_{n\in \Zpn}\D(X)(\Zpb/p^n). 
\end{equation*}
Note that it is a free $\Zpb$-module of finite rank equipped with Frobenius $F$ and Vershibung $V=pF^{-1}$. We write $\D(X)_{\Q}$ for the rational Dieudonn{\'e} module of $X$, that is, the $\Qpb$-vector space $\D(X)(\Zpb)\otimes_{\Z}\Q$ equipped with Frobenius $F$ (and Vershibung) induced by that on $\D(X)(\Zpb)$. Moreover, for $S=\spec \F_{p^r}$, we also define the Dieudonn{\'e} module $\D(X)(\Z_{p^r})$ as the same manner as the case for $S=\Fpbar$. 

On the other hand, let $\D(X)_{S}$ be the pull-back of $\D(X)$ to the Zariski site over $S$. Then $\D(X)_{S}$ is equipped with the Hodge filtration $\Fil^{1}(X)$ satisfying $\D(X)_{S}/\Fil^{1}(X)\cong \Lie(X^{\vee})$. We regard $\Lie(X^{\vee})$ as the $0$-th filtration on $\D(X)_{S}$. 

Now fix a closed immersion $i\colon G\hookrightarrow \GSp_{2n,\Qp}$ satisfying $i(\bx)=\bx_{2n}$. We regard $\cG_{\bx}$ as a subgroup of $\GSp_{2n,\Zp}$ by $i$. Moreover, take a finite collection $(s_{\alpha})$ in $V_{2n,\Zp}^{\otimes}$ whose point-wise stabilizer in $\GSp_{2n,\Zp}$ is $\cG_{\bx}$ (if $G=\GSp_{2n,\Qp}$, we take $(s_{\alpha})$ as the empty collection). On the other hand, as in Section \ref{lcmd}, we have a closed immersion
\begin{equation*}
M^{\loc}(G,\mu,\bx)\hookrightarrow M^{\loc}_{2n,O_E}:=M^{\loc}_{2n}\otimes_{\Zp}O_E. 
\end{equation*}
We regard $M^{\loc}(G,\mu,\bx)$ as the subscheme of $M^{\loc}_{2n,O_E}$ by the above closed immersion. 

Let $(X,\lambda)$ be a principally polarized $p$-divisible group over $\Fpbar$. We assume that there is a collection $(t_{\alpha,0})$ of $\D(X)(\Zpb)^{\otimes}$ and an isomorphism $\tau \colon \D(X)(\Zpb)\xrightarrow{\cong} V_{2n,\Zp}^{*}\otimes_{\Zp}\Zpb$ satisfying the following: 
\begin{itemize}
\item $\tau$ maps $t_{\alpha,0}$ to $s_{\alpha}\otimes 1$, and respects $\lambda_0$ and $\psi_{2n}$ up to $\Qpbt$-multiple, 
\item the point $y \in M^{\loc}_{2n}(\Fpbar)$ corresponding to the Hodge filtration $\Fil^0(X)$ of $\D(X)^{*}_{\Fpbar}$ belongs to $M^{\loc}(G,\mu,\bx)$. 
\end{itemize}

We define the deformation of the triple $(X,\lambda,(t_{\alpha}))$, which is a formal scheme over $\spf O_{\Eb}$ whose underlying space is a singuleton. 

\textbf{Case 1. $(G,\mu,\bx)=(\GSp_{2n},\mu_{2n},\bx_{2n})$. }
We consider the deformation $\Def(X,\lambda)$ of the polarized $p$-divisible group $(X,\lambda)$ in the usual sense. Then the Grothendieck--Messing theory implies that there is an isomorphism
\begin{equation*}
\Def(X,\lambda)\cong \widehat{M}^{\loc}_{2m,\Zpb,y},
\end{equation*}
which is defined as taking the pull-back of the Hodge filtration by $\tau$. 

\textbf{Case 2.~General case. }
We define $\Def(X,\lambda,(t_{\alpha,0}))$ as the following commutative diagram: 
\begin{equation*}
\xymatrix{
\Def(X,\lambda,(t_{\alpha,0}))\ar[r]^{\cong}\ar[d] & \widehat{M}^{\loc}(G,\mu,\bx)_{y} \ar[d]\\
\Def(X,\lambda)_{O_{\Eb}} \ar[r]^{\cong}& \widehat{M}^{\loc}_{2n,O_{\Eb},y},}
\end{equation*}
where the lower horizontal isomorphism is the base change to $O_{\Eb}$ of the isomorphism in Case 1. 


\section{Kisin--Pappas integral models}\label{kpim}

\subsection{Definition of Kisin--Pappas integral models}\label{dfkp}

We recall the definition of the integral models of Shimura varieties of Hodge type constructed as in \cite[\S 4.2]{Kisin2018}. 

First, we give a specific Shimura datum in the sense of \cite{Deligne1979}. Let $\bS:=\Res_{\C/\R}\G_{m,\C}$, the Weil restriction of $\G_{m,\C}$ to $\R$. For $m\in \Zpn$, let $S_{2n}^{\pm}$ be the $\GSp_{2n}(\R)$-conjugacy class of $\Hom_{\R}(\bS,\GSp_{2n,\R})$ containing
\begin{equation*}
\bS \rightarrow \GSp_{2n,\R};a+b\sqrt{-1}\mapsto 
\begin{pmatrix}
aE_{n}&-bE_{n}\\
bE_{n}&aE_{n}
\end{pmatrix}. 
\end{equation*}
Then $(\GSp_{2n,\Q},S_{2n}^{\pm})$ is a Shimura datum with reflex field $\Q$. 

Now let $(\bG,\bX)$ be a Shimura datum with reflex field $\E$. In this paper, we assume that $(\bG,\bX)$ is of Hodge type, that is, there is a closed immersion $\bi \colon (\bG,\bX) \hookrightarrow (\GSp_{2n,\Q},S_{2n}^{\pm})$. Moreover, we assume that $G:=\bG\otimes_{\Q}\Qp$ satisfies the condition (I) in Section \ref{lcmd}. Let $\bx \in \cB(\bG,\Qp)$, and define a compact open subgroup $K_p$ of $\bG(\Qp)$ as the \emph{full} stabilizer of $\bx$. Moreover, let $K^p$ be a neat compact open subgroup of $\bG(\A_{f}^{p})$, and put $K:=K_pK^p$. In the following, we give a definition of Kisin--Pappas integral model of the Shimura variety $\Sh_{K}(\bG,\bX)$ for a place $\mathfrak{p}$ of $\E$ lying above $p$. 

\textbf{Case 1. $(\bG,\bX)=(\GSp_{2m,\Q},S_{2n}^{\pm})$ and $K_p$ is hyperspecial. }
In this case, the condition (I) is affirmative. We define a Kisin--Pappas integral model $\sS_{K,2m}=\sS_{K}(\bG,\bX)$ as the moduli space of the functor which classifies prime-to-$p$ isogeny classes of triples $(A,\lambda,\overline{\eta}^p)$ for any connected noetherian scheme $S$ over $\Z_{(p)}$, where
\begin{itemize}
\item $A$ is an abelian scheme over $S$ of dimension $m$,
\item $\lambda$ is a prime-to-$p$ polarization of $A$,
\item $\overline{\eta}^p$ is a $K^p$-level structure on $A$. 
\end{itemize}
Note that $\sS_{K,2m}$ is formally smooth over $\Z_{(p)}$ with generic fiber $\Sh_{K}(\GSp_{2m,\Q},S_{2n}^{\pm})$. 

\textbf{Case 2. General case. }
Choose a closed immersion of Shimura datum $\bi \colon (\bG,\bX) \hookrightarrow (\GSp_{2m,\Q},S_{2n}^{\pm})$ satisfying $\bi(\bx)=\bx_{2n}$. Such a morphism exists by \cite[Proposition 1.3.3]{Kisin2018}. For a neat compact open subgroup ${K'}^p$ of $\GSp_{2n}(\A_{f}^{p})$, put $K':={K'}^p\GSp_{2n}(\Zp)$. Here we require that the morphism
\begin{equation*}
\Sh_{K}(\bG,\bX)\rightarrow \Sh_{K'}(\GSp_{2m,\Q},S_{2n}^{\pm})\otimes_{\Q}\E
\end{equation*}
induced by $\bi$ is a closed immersion. Note that such $K^p$ can be taken by \cite[Lemma (2.1.2)]{Kisin2010}. Let $\sS_{K',2n}$ be the $\Z_{(p)}$-scheme defined as in Case 1. 

For a place $\mathfrak{p}$ of $E$ above $p$, denote by $O_{\E,(\mathfrak{p})}$ the valuation subring of $\E$ with respect to $\mathfrak{p}$. We define a Kisin--Pappas integral model $\sS_{K}$ of $\Sh_{K}(\bG,\bX)$ to be the normalization of the scheme-theoretic closure of $\Sh_{K}(\bG,\bX)$ in $\sS_{K',2n}\otimes_{\Z_{(p)}}O_{\E,(\mathfrak{p})}$. By definition, $\sS_{K}$ is flat over $O_{\E,(\mathfrak{p})}$ with generic fiber $\Sh_{K}(\bG,\bX)$. 

\begin{rem}
It is non-trivial that whether $\sS_{K}(\bG,\bX)$ is independent of the choice of the embedding of Shimura datum $\bi \colon (\bG,\bX)\hookrightarrow (\GSp_{2n},S_{2n}^{\pm})$. When $K_p$ is hyperspecial, it was proved by \cite{Kisin2010} by giving a characterization by means of an ``extension property''. In general case, the independence of $\bi$ was proved by \cite{Zhang2019}. Moreover, if $K_p$ is a connected parahoric subgroup, that is, if $\cG_{x}=\cG_{x}^{\circ}$, Pappas gives a characterization on $\sS_{K}(\bG,\bX)$ by means of $(\cG_{x},M^{\loc}(G,\mu,\bx))$-displays. See \cite{Pappas2020} for more details. 
\end{rem}

\subsection{Newton stratification}\label{ntst}

Let $\sS_{K}$ be the Kisin--Pappas integral model associated to an embedding $\bi \colon (\bG,\bX)\hookrightarrow (\GSp_{2n,\Q},S^{\pm}_{2n})$ as in Section \ref{dfkp}. By construction, we obtain an abelian scheme $h\colon \cA_{K} \rightarrow \sS_{K}$ with prime-to-$p$ polarization $\lambda_{K}\colon \cA_{K}\rightarrow \cA_{K}^{\vee}$. More precisely, it is the pull-back of the universal abelian scheme over $\sS_{K,2n}$ along the canonical morphism $\sS_{K}\rightarrow \sS_{K,2n}$. We denote by $\cA_{K,z}$ the pull-back of $\cA_{K}$ by the morphism $z\colon S\rightarrow \sS_{K}$. 

Let $\bG_{\Z_{(p)}}$ be the closure of the image of the closed immersion $\bG \xrightarrow{\bi} \GSp_{2n,\Q}\hookrightarrow \GSp_{2n,\Z_{(p)}}$. Then $\bG_{\Z_{(p)}}$ is smooth over $\Z_{(p)}$, and that there is an isomorphism $\bG_{\Z_{(p)}}\otimes_{\Z_{(p)}}\Zp \cong \cG_{\bx}$. Moreover, there is a finite collection $(s_{\alpha})$ of $V_{2n,\Z_{(p)}}^{\otimes}$ whose point-wise stabilizer in $\GSp(V_{2n,\Z_{(p)}})$ equals $\bG_{\Z_{(p)}}$. 

Let $z\in \sS_{K}(\F_p^r)$ where $r\in \Zpn$. As in \cite[\S 9.2]{Zhou2020}, there are a finite collection $(t_{\alpha,0,z})$ in $\D(\cA_{K,z})(\Z_{p^r})^{\otimes}$ and an isomorphism
\begin{equation*}
\tau_z\colon \D(\cA_{K,z})(\Z_{p^r}) \xrightarrow{\cong} V_{2n,\Zp}^{*}\otimes_{\Zp}\Z_{p^r}
\end{equation*}
that satisfy the conditions (i) and (ii) in Section \ref{dfpd}. Hence we can consider the deformation $\Def(\cA_{K,z}[p^{\infty}],\lambda_{z},(t_{\alpha,0,z}))$ of $(\cA_{z}[p^{\infty}],\lambda_{z},(t_{\alpha,0,z}))$. It gives a formal neighborhood of $z$ as follows. 

\begin{prop}\label{dfp1} (\cite[Proposition 4.2.2, Corollary 4.2.4]{Kisin2018})
\emph{Let $z\in \sS_{K}(\Fpbar)$. Then there is an isomorphism $\widehat{\sS}_{K,z}\cong \Def(\cA_{K,z}[p^{\infty}],\lambda_{K,z},(t_{\alpha,0,z}))$. }
\end{prop}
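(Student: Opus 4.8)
The plan is to establish the isomorphism $\widehat{\sS}_{K,z} \cong \Def(\cA_{K,z}[p^\infty], \lambda_{K,z}, (t_{\alpha,0,z}))$ by reducing to the Siegel case and then cutting out the closed subscheme defined by the tensors. First I would treat the Siegel case $(\bG,\bX) = (\GSp_{2n,\Q}, S_{2n}^\pm)$: here $\sS_{K,2n}$ is the moduli space of prime-to-$p$ isogeny classes of polarized abelian schemes with level structure, so by Grothendieck--Messing theory (or Serre--Tate theory) the formal completion $\widehat{\sS}_{K,2n,z}$ at a point $z$ is canonically isomorphic to the deformation functor $\Def(\cA_{K,z}[p^\infty], \lambda_{K,z})$ of the polarized $p$-divisible group, which by the discussion in Section \ref{dfpd} (Case 1) is in turn identified with $\widehat{M}^{\loc}_{2n, \Zpb, y}$ via $\tau_z$. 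This part is classical and essentially the content of \cite[Proposition 4.2.2]{Kisin2018}.

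Next I would use the definition of $\sS_K$ as the normalization of the scheme-theoretic closure of $\Sh_K(\bG,\bX)$ inside $\sS_{K',2n} \otimes_{\Z_{(p)}} O_{\E,(\mathfrak{p})}$, together with the fact that $\sS_K$ is flat over $O_{\E,(\mathfrak{p})}$. The composite $\sS_K \to \sS_{K',2n}$ furnishes the principally polarized abelian scheme $\cA_K$ with its tensors $(t_{\alpha,0,z})$, which are horizontal and Frobenius-invariant and cut out $\cG_{\bx}$ inside $\GSp_{2n}$. By Grothendieck--Messing, deformations of $\cA_{K,z}[p^\infty]$ with its polarization correspond to lifts of the Hodge filtration, and the constraint that the $(t_{\alpha})$ remain Tate (equivalently, that the lifted filtration is induced by a cocharacter conjugate to $\mu$, equivalently that the associated point lands in $M^{\loc}(G,\mu,\bx) \subset M^{\loc}_{2n,O_E}$) exactly defines $\Def(\cA_{K,z}[p^\infty], \lambda_{K,z}, (t_{\alpha,0,z}))$ as the closed formal subscheme matching $\widehat{M}^{\loc}(G,\mu,\bx)_y$ under the identification of Case 1. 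The content is then that this closed formal subscheme of $\widehat{\sS}_{K,2n,z}$ coincides with $\widehat{\sS}_{K,z}$.

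The key step, and the main obstacle, is precisely this last coincidence: one must show that the normalization of the closure of $\Sh_K(\bG,\bX)$ has formal completion at $z$ equal to the tensor-cut-out deformation space, rather than something strictly smaller or a partial normalization thereof. The argument is that both sides are flat $O_{\Eb}$-formal schemes with the same generic fiber (the formal completion of $\Sh_K(\bG,\bX)$, which embeds in the Siegel one as the locus where the tensors are respected) and the tensor-deformation space $\Def(\cA_{K,z}[p^\infty], \lambda_{K,z}, (t_{\alpha,0,z}))$ is already normal — indeed it is identified with $\widehat{M}^{\loc}(G,\mu,\bx)_y$, and the local model $M^{\loc}(G,\mu,\bx)$ is normal under hypothesis (I) by the results of Pappas--Zhu (and its refinements used in \cite{Kisin2018}). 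Since a flat scheme with normal special-fiber-compatible structure and the correct generic fiber sandwiched between the closure and its normalization must equal the normalization, one concludes. I would therefore invoke \cite[Proposition 4.2.2, Corollary 4.2.4]{Kisin2018} directly for the precise comparison, noting that the only inputs beyond the Siegel case are the normality of $M^{\loc}(G,\mu,\bx)$ and the fact that the tensors $(t_{\alpha,0,z})$ and the isomorphism $\tau_z$ satisfying conditions (i), (ii) of Section \ref{dfpd} exist — which is recalled from \cite[\S 9.2]{Zhou2020} just above the statement.
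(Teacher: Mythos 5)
The paper itself gives no argument here: Proposition \ref{dfp1} is recorded as a direct citation of \cite[Proposition 4.2.2, Corollary 4.2.4]{Kisin2018}, and note that in Section \ref{dfpd} the object $\Def(\cA_{K,z}[p^{\infty}],\lambda_{K,z},(t_{\alpha,0,z}))$ is \emph{defined} to be the pullback of $\widehat{M}^{\loc}(G,\mu,\bx)_{y}$ inside $\Def(\cA_{K,z}[p^\infty],\lambda_{K,z})_{O_{\Eb}}\cong \widehat{M}^{\loc}_{2n,O_{\Eb},y}$. So your closing move of invoking Kisin--Pappas for the precise comparison is exactly what the paper does, and to that extent your proposal is consistent with it. Your Siegel-case paragraph (Serre--Tate/Grothendieck--Messing) is also fine and is indeed the classical input.

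However, taken as a self-contained argument your middle step has a genuine gap. The heart of the matter is the \emph{containment}: one must show that the completed local ring of (the closure of) $\Sh_K(\bG,\bX)$ inside $\sS_{K',2n}$ at $z$ actually lies in the tensor-adapted deformation space, i.e. that the crystalline tensors $t_{\alpha,0,z}$ deform over $\widehat{\sS}_{K,z}$, remain Frobenius-invariant and horizontal, and that the Hodge filtration of the universal deformation is given by a cocharacter conjugate to $\mu$ (equivalently, that the classifying point lands in $M^{\loc}(G,\mu,\bx)\subset M^{\loc}_{2n,O_E}$). You present this as following from the slogan that the Shimura variety is ``the locus where the tensors are respected'' together with flatness, normality of $M^{\loc}(G,\mu,\bx)$, and equality of generic fibers; but the identification of the generic fibre of $\widehat{\sS}_{K,z}$ with the tensor-respecting locus of the Siegel deformation space is precisely what requires the integral $p$-adic Hodge-theoretic construction (Breuil--Kisin modules, the extension of the Hodge tensors on the generic fibre to crystalline tensors over the complete local ring, and the $(\cG_\bx,\mu)$-adaptedness of the resulting filtration) carried out in \cite[\S\S 3.2--3.3, 4.1--4.2]{Kisin2018}. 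Flatness plus normality plus a generic-fibre comparison cannot produce this containment on their own. Once the containment is in hand, your concluding argument is essentially right: the completion of the closure is a closed formal subscheme of the integral, $O_E$-flat, normal formal scheme $\widehat{M}^{\loc}(G,\mu,\bx)_y$ of the same dimension, hence equals it, and normality (Pappas--Zhu) then shows that passing to the normalization $\sS_K$ does not change the completed local ring — though you should also say a word about comparing the completion of the normalization with the normalization of the completion (excellence). So either keep the proposition as a citation, as the paper does, or be explicit that the crystalline-tensor extension step is the substantive input being imported from Kisin--Pappas.
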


Next, we are going to define Newton strata of $\sSbar_{K}$. 
\begin{dfn}\label{ptsc}
Under the notations above, we define $b_z\in G(\Q_{p^r})\subset G(\Qpb)$ as the element such that $b_z\sigma$ corresponds to the Frobenius on $\D(\cA_{K,z})_{\Q}$ under $\tau_z$. 
\end{dfn}
By construction, $[b_z]\in B(G)$ is uniquely determined. Moreover, it lies in $B(G,\{\mu\})$ by \cite[\S 8]{Zhou2020}. 

\begin{dfn}
For $[b]\in B(G,\{\mu\})$, put
\begin{equation*}
\sSbar_{K,[b]}:=\{y\in \sSbar_{K}\mid [b_y]=[b]\}. 
\end{equation*}
We call it as the \emph{Newton stratum} of $\sSbar_{K}$ with respect to $[b]$. 
\end{dfn}

The stratum $\sSbar_{K,[b]}$ is locally closed in $\sS_{K}$ by \cite[Corollary 4.12 (1)]{Hamacher2019}. Moreover, if $[b]$ is basic, it is non-empty and closed in $\sS_{K}$ by \cite[Theorem 1.3.13]{Kisin}. 

\subsection{Local model diagrams for Kisin--Pappas integral models}

We keep the notations as in Section \ref{ntst}. For $h\in \bX$, let $\mu_h$ be the composite
\begin{equation*}
\G_{m,\C}\rightarrow \bS \otimes_{\R}\C \xrightarrow{h} \bG\otimes_{\Q}\C,
\end{equation*}
where $\G_{m,\C}\rightarrow \bS \otimes_{\R}\C$ is induced by a natural inclusion $\G_{m,\R}\hookrightarrow \bS$. Then we obtain a cocharacter $\mu$ of $G$ by $\mu_h^{-1}$ and the fixed isomorphism $\Qpbar \cong \C$. Note that the conjugacy class $\{\mu\}$ of $\mu$ is independent of $\mu$, and the closed immersion $i\colon G\hookrightarrow \GSp_{2n,\Qp}$ induced by an embedding $\bi$ satisfies (I\hspace{-0.5mm}I) for $\mu$. Hence we can consider the local model $M^{\loc}(G,\mu,\bx)$ for $(G,\mu,\bx)$. Let $O_E$ be the integer ring of the local reflex field of $(G,\{\mu\})$. Then $O_{E}$ is the completion of $O_{\E,(\mathfrak{p})}$. 

Now we recall the local model diagram constructed by Kisin and Pappas. Set
\begin{equation*}
\cV_{\cA_{K}/\sS_{K}}:=\rR^{1}h_{*}\Omega_{\cA_{K}/\sS_{K}}^{\bullet}. 
\end{equation*}
It is equipped with the Hodge filtration $\Fil^{1}(\cA_{K,z})\subset \cV_{\cA_{K}/\sS_{K}}$ which induces an isomorphism $\cV_{\cA_{K}/\sS_{K}}/\Fil^{1}(\cA_{K,z})\cong \Lie(\cA_{K,z}^{\vee})$. We denote by $(s_{\alpha,\dR})$ be a finite collection in $\cV_{\cA_{K}/\sS_{K}}^{\otimes}$ as constructed in \cite[Proposition 4.2.6]{Kisin2018}. 

\begin{dfn}\label{sktl}
We define an $O_E$-scheme $\widetilde{\sS}_{K,O_E}$ as the moduli space of the functor that parametrizes pairs $(z,f)$ for any $O_{E}$-scheme $S$, where
\begin{itemize}
\item $z\in \sS_{K}(S)$,
\item $f\colon \cV_{\cA_{K}/\sS_{K},z} \xrightarrow{\cong}V_{\Z_{(p)}}^{*}\otimes_{\Z_{(p)}} \O_S$ is an isomorphism which maps $s_{\alpha,\dR,z}$ to $s_{\alpha}\otimes 1$. 
\end{itemize}
\end{dfn}

Furthermore, we define two morphisms from $\widetilde{\sS}_{K,O_E}$ as follow: 
\begin{equation*}
\pi \colon \widetilde{\sS}_{K,O_E}\rightarrow \sS_{K,O_E};(z,f)\mapsto z,\quad
q\colon \widetilde{\sS}_{K,O_E}\rightarrow M^{\loc}_{2n,O_E};(z,f)\mapsto f(\Fil^1(\cA_{K,z})). 
\end{equation*}

\begin{prop}\label{lmds}(\cite[Theorem 4.2.7]{Kisin2018})
\emph{
\begin{enumerate}
\item The morphism $q\colon \widetilde{\sS}_{K,O_E}\rightarrow M^{\loc}_{2n,O_E}$ factors through $M^{\loc}(G,\mu,\bx)\hookrightarrow M^{\loc}_{2m,O_E}$. 
\item The diagram
\begin{equation*}
\xymatrix{
&\widetilde{\sS}_{K,O_E}\ar[ld]_{\pi}\ar[rd]^{q}&\\
\sS_{K,O_{E}}&&M^{\loc}(G,\mu,\bx),}
\end{equation*}
satisfies the following: 
\begin{itemize}
\item $\pi$ is a $\cG_{\bx,O_E}$-torsor,
\item $q$ is $\cG_{\bx,O_E}$-equivariant and smooth of relative dimension $\dim(G)$. 
\end{itemize}
\end{enumerate}}
\end{prop}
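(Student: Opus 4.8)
The plan is to read off the local model diagram from two ingredients already at hand: the $\cG_{\bx,O_E}$-torsor structure built into the very definition of $\widetilde{\sS}_{K,O_E}$ through the de Rham tensors $(s_{\alpha,\dR})$, and the identification of the formal completions of $\sS_{K}$ with those of $M^{\loc}(G,\mu,\bx)$ furnished by Proposition \ref{dfp1} together with the deformation theory of Section \ref{dfpd}. Write $\cG:=\bG_{\Z_{(p)}}\otimes_{\Z_{(p)}}O_E\cong\cG_{\bx,O_E}$ for the stabilizer group scheme of $(s_{\alpha}\otimes 1)$ in $\GSp_{2n,\Z_{(p)}}\otimes_{\Z_{(p)}}O_E$. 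That $\pi$ is a $\cG$-torsor is then nearly formal: by \cite[Proposition 4.2.6]{Kisin2018} the $(s_{\alpha,\dR})$ are flat global sections of $\cV_{\cA_{K}/\sS_{K}}^{\otimes}$, and \'etale-locally on $\sS_{K,O_E}$ there is an isomorphism $\cV_{\cA_{K}/\sS_{K}}\cong V_{2n,\Z_{(p)}}^{*}\otimes\O$ carrying $(s_{\alpha,\dR})$ to $(s_{\alpha}\otimes 1)$ and the pairing induced by $\lambda_{K}$ to the standard one up to a scalar; since $\widetilde{\sS}_{K,O_E}$ parametrizes precisely such fibrewise isomorphisms $f$, it is an \'etale-locally trivial right $\cG$-torsor over $\sS_{K,O_E}$, and $\pi$ is faithfully flat and smooth of relative dimension $\dim(\cG)=\dim(G)$.

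\emph{Factorization of $q$ through $M^{\loc}(G,\mu,\bx)$.} Since $\sS_{K}$ is normal and flat over $O_{\E,(\mathfrak{p})}$, the torsor $\widetilde{\sS}_{K,O_E}$ is flat over $O_E$ with schematically dense generic fibre, so it is enough to check the factorization over $E$. For a point $(z,f)$ over $E$, the Hodge filtration $\Fil^{1}(\cA_{K,z})$ is totally isotropic for the pairing induced by $\lambda_{K}$, is of the type prescribed by $\{\mu\}$, and is compatible with the tensors $(s_{\alpha,\dR,z})$; hence $f(\Fil^{1}(\cA_{K,z}))$ lies in the image of $X_{\mu}\hookrightarrow M^{\loc}_{2n}\otimes_{\Zp}E$. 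As $M^{\loc}(G,\mu,\bx)$ is the scheme-theoretic closure of $X_{\mu}$ in $M^{\loc}_{2n,O_E}$ (Section \ref{lcmd}), the morphism $q$ factors through $M^{\loc}(G,\mu,\bx)$, which is assertion (i).

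\emph{Smoothness of $q$.} Source and target are of finite type over $O_E$, so it suffices to show $q$ is formally smooth of relative dimension $\dim(G)$ at every point. On the generic fibre this is the classical de Rham period morphism for $\Sh_{K}(\bG,\bX)$. At a point $z\in\sSbar_{K}(\Fpbar)$ under some $(z,f)$, Proposition \ref{dfp1} gives $\widehat{\sS}_{K,z}\cong\Def(\cA_{K,z}[p^{\infty}],\lambda_{K,z},(t_{\alpha,0,z}))$, and Case 2 of Section \ref{dfpd} identifies the latter, via Grothendieck--Messing and the crystalline comparison matching $(t_{\alpha,0,z})$ with $(s_{\alpha,\dR,z})$, with $\widehat{M}^{\loc}(G,\mu,\bx)_{y}$ — the identification sending a deformation to the point cut out by its Hodge filtration in a fixed trivialization of the Dieudonn\'e module. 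Taking that trivialization to be the one induced by $f$ at $z$, this identification is, up to a $\cG$-translation carrying $y$ to $q(z,f)$, the composite of the torsor projection $\widehat{\widetilde{\sS}}_{K,(z,f)}\to\widehat{\sS}_{K,z}$ with $q$. The projection is formally smooth of relative dimension $\dim(\cG)=\dim(G)$ because $\pi$ is a $\cG$-torsor, so composing with the isomorphism $\widehat{\sS}_{K,z}\cong\widehat{M}^{\loc}(G,\mu,\bx)_{y}$ shows $q$ is formally smooth of relative dimension $\dim(G)$ at $(z,f)$. This gives assertion (ii), and $\cG$-equivariance of $q$ is immediate from the construction of $\widetilde{\sS}_{K,O_E}$ as a torsor with the $\cG$-action rescaling $f$.

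\emph{Expected main difficulty.} The one genuinely non-formal point is the identification invoked in the smoothness step: that the de Rham tensors $(s_{\alpha,\dR,z})$ and the crystalline tensors $(t_{\alpha,0,z})$ correspond under the canonical de Rham–crystalline comparison, and that the identification of Section \ref{dfpd} between $\Def(\cA_{K,z}[p^{\infty}],\lambda_{K,z},(t_{\alpha,0,z}))$ and $\widehat{M}^{\loc}(G,\mu,\bx)_{y}$ is compatible with the Hodge filtration and the polarization. This is where hypothesis (I) and the structure results for $M^{\loc}(G,\mu,\bx)$ of \cite{Pappas2013} enter; granting it, assertions (i) and (ii) follow by Grothendieck--Messing and faithfully flat descent along $\pi$.
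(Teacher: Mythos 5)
This proposition is not proved in the paper at all: it is quoted verbatim from \cite[Theorem 4.2.7]{Kisin2018}, so your attempt has to be measured against the Kisin--Pappas proof, and as it stands it re-asserts rather than proves the two hard points of that theorem. The first and most serious gap is the torsor statement. You declare it ``nearly formal'' that \'etale-locally on $\sS_{K,O_E}$ there is a trivialization of $\cV_{\cA_{K}/\sS_{K}}$ carrying $(s_{\alpha,\dR})$ to $(s_{\alpha}\otimes 1)$; but \cite[Proposition 4.2.6]{Kisin2018} only gives the existence of the horizontal sections $s_{\alpha,\dR}$, and says nothing about their behaviour at points of the special fiber. That the scheme of tensor-matching trivializations is smooth and surjective over $\sS_{K,O_E}$ --- equivalently, that at every closed point the fibrewise stabilizer of $(s_{\alpha,\dR,z})$ is a form of $\cG_{\bx}$ and a trivialization exists over the completed local ring --- is precisely the core content of Theorem 4.2.7, established in \cite{Kisin2018} through the crystalline tensors, Zink/Breuil--Kisin theory and the analysis of $\widehat{\sS}_{K,z}$; a priori the tensors could degenerate in characteristic $p$, and nothing formal rules this out. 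Asserting this step amounts to assuming the theorem.

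The smoothness step has the same circularity plus a technical slip. The compatibility you flag as the ``expected main difficulty'' --- that $(t_{\alpha,0,z})$ and $(s_{\alpha,\dR,z})$ match under the crystalline--de Rham comparison and that the identification $\widehat{\sS}_{K,z}\cong \Def(\cA_{K,z}[p^{\infty}],\lambda_{K,z},(t_{\alpha,0,z}))\cong \widehat{M}^{\loc}(G,\mu,\bx)_{y}$ is compatible with the Hodge filtration, the polarization and the trivialization $f$ --- is itself part of the Kisin--Pappas argument (their Proposition 4.2.2 and its proof), so ``granting it'' leaves the proof incomplete. Moreover, the formal deduction as written conflates maps with different sources: the isomorphism $\widehat{\sS}_{K,z}\cong \widehat{M}^{\loc}(G,\mu,\bx)_{y}$ cannot literally equal ``the composite of the torsor projection with $q$'', and $q$ is not constant along the fibers of $\pi$, so even granting the compatibility you must choose a formal section of $\pi$ through $(z,f)$, write $\widehat{\widetilde{\sS}}_{K,(z,f)}$ as a product with the completed group, and use $\cG_{\bx,O_E}$-equivariance of $q$ to exhibit it formally as projection composed with an isomorphism. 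By contrast, your part (i) (factorization through the scheme-theoretic closure via flatness of $\sS_{K}$ and schematic density of the generic fiber, where the filtration visibly lies in $X_{\mu}$) is correct and is indeed how the factorization is obtained; the rest needs the genuinely crystalline input you have left out.
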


\section{Rapoport--Zink spaces}\label{rzsp}

In this section, we recall the definitions of the Rapoport--Zink spaces of Hodge type with Bruhat--Tits level structure constructed by \cite[\S \S 5.3--5.4]{Hamacher2019}. 

First, we give the notion of the embedded Rapoport--Zink datum; cf.~\cite[\S 5.1]{Hamacher2019}. 
\begin{dfn}
An \emph{embedded Rapoport--Zink datum} is a quintuple $(G,\mu,\bx,b,i)$, where
\begin{enumerate}\label{ebrz}
\item $G$ is a reductive connected group over $\Qp$,
\item $\mu \colon \G_{m,\Qpbar}\rightarrow G\otimes_{\Qp}\Qpbar$ is a minuscule cocharacter,
\item $\bx \in \cB(G,\Qp)$,
\item $b\in \bigcup_{w\in \Adm(\mu)}\cG_{\bx}^{\circ}(\Zpb)\sigma(w)\cG_{\bx}^{\circ}(\Zpb)$ is a decent element,
\item $i\colon G\hookrightarrow \GSp_{2n,\Qp}$ is a closed immersion,
\end{enumerate}
satisfying the following conditions: 
\begin{itemize}
\item $i\circ \mu=\mu_{2n}$, $[i(b)]\in B(\GSp_{2n,\Qp},\mu_{2n})$ and $i(\bx)=\bx_{2n}$. 
\end{itemize}
\end{dfn}

\begin{rem}
By \cite[Theorem 2.1]{He2016}, $b\in G(\Qpb)$ as in Definition \ref{ebrz} satisfies $[b] \in B(G,\{\mu\})$. Hence $(G,\{\mu\},[b])$ is a local Shimura datum in the sense of \cite[Definition 5.1]{Rapoport2014b}. 
\end{rem}

Set $\Kb_p:=\cG_{\bx}(\Zpb)$, which is a compact open subgroup of $G(\Qpb)$, and put
\begin{equation*}
X_{\mu}(b)_{\Kb_p}(\Fpbar):=\left\{g\Kb_p\in G(\Qp)/\Kb_p\,\middle| \, g^{-1}b\sigma(g)\in \bigcup_{w\in \Adm(\mu)}\Kb_p w\Kb_p \right\}. 
\end{equation*}
As pointed out in \cite[5.2]{Hamacher2019}, we can regard it as a perfect subscheme of the Witt vector affine Grassmanian of $\cG_{\bx}$ by using \cite[Corollary 9.6]{Bhatt2017}. 

We give a description of $X_{\mu}(b)_{\Kb_p}(\Fpbar)$ by means of $p$-divisible groups with additional structure. Let $(\X_b,\lambda_b)$ be a polarized $p$-divisible group admitting an isomorphism 
\begin{equation*}
\tau_b\colon \D(\X_b)(\Zpb)\xrightarrow{\cong} V_{2n,\Z_{(p)}}^{*}\otimes_{\Z_{(p)}}\Zpb
\end{equation*}
such that the Frobenius on $\D(\X_b)(\Zpb)$ corresponds to $b\sigma^{*}$ under $\tau_{b}$. Then put $t_{b,\alpha}:=\tau_b^{-1}(s_{\alpha}\otimes 1)$. 

\begin{prop}\label{adrz}
\emph{There is an isomorphism between $X_{\sigma(\mu)}(b)_{\Kb_p}(\Fpbar)$ and the set of isomorphism classes of triples $(X,\lambda,\rho)$, where
\begin{itemize}
\item $X$ is a $p$-divisible group over $\Fpbar$,
\item $\lambda \colon X\rightarrow X^{\vee}$ is a principal polarization, 
\item $\rho \colon \X_{b} \rightarrow X$ is a quasi-isogeny,
\end{itemize}
that satisfy the following: 
\begin{itemize}
\item $\rho^{\vee}\circ \lambda \circ \rho=c\lambda_{0}$ for some $c\in \Qpt$,
\item $\rho_{*}(t_{b,\alpha})\in \D(X)(\Zpb)^{\otimes}[p^{-1}]$ lies in $\D(X)(\Zpb)^{\otimes}$ for any $\alpha$. 
\end{itemize}}
\end{prop}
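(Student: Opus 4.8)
The plan is to construct the bijection explicitly via Dieudonné theory, in the same spirit as the classical PEL case of Rapoport--Zink, and then to check that the Kisin tensors are matched up correctly on both sides. First I would set up the correspondence on the level of lattices: an element $g\Kb_p \in X_{\sigma(\mu)}(b)_{\Kb_p}(\Fpbar)$ determines the $\Zpb$-lattice $\Lambda_g := g \cdot (V_{2n,\Z_{(p)}}^{*}\otimes \Zpb)$ inside $V_{2n,\Qpb}^{*}$, on which $b\sigma^{*}$ acts; the condition $g^{-1}b\sigma(g)\in \bigcup_{w\in\Adm(\sigma(\mu))}\Kb_p w\Kb_p$ is exactly the statement that the pair $(\Lambda_g, b\sigma^{*})$ is the Dieudonné module of a $p$-divisible group $X$ whose relative position with respect to $\D(\X_b)(\Zpb)$ is bounded by $\Adm(\mu)$, i.e.\ that $\rho := $ the map induced by $\id$ on rational Dieudonné modules is a quasi-isogeny $\X_b \to X$. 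Here I would invoke the contravariant Dieudonné equivalence over $\Fpbar$ (which sends $p$-divisible groups to Dieudonné modules) together with the standard dictionary between $\Adm(\mu)$-boundedness and the minuscule condition $p\Lambda_g \subset b\sigma^{*}(\Lambda_g)\subset \Lambda_g$ (after the usual normalization), so that the cocharacter $\mu$ being minuscule guarantees $X$ is an honest $p$-divisible group rather than just a formal group with quasi-isogeny data.

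Second I would deal with the polarization. The symplectic form $\psi_{2n}$ on $V_{2n}^{*}$ induces, via $\tau_b$, a perfect alternating pairing on $\D(\X_b)(\Zpb)$ compatible with Frobenius up to the similitude factor, hence a principal polarization $\lambda_b$ on $\X_b$; this is the datum already fixed in the statement. For $g\Kb_p$ in the set, the fact that $i(G)\subset \GSp_{2n}$ (so $g$ is a symplectic similitude) means the induced pairing on $\Lambda_g$ is again perfect up to a scalar $c\in\Qpt$, and $c$ is precisely the similitude factor of $g$; translating back through Dieudonné theory this gives a principal polarization $\lambda$ on $X$ with $\rho^{\vee}\circ\lambda\circ\rho = c\lambda_b$. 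Conversely, given $(X,\lambda,\rho)$ one recovers $g$ up to $\Kb_p$ as the base-change isomorphism $\rho$ identifies $\D(X)(\Zpb)$ with a lattice in $\D(\X_b)(\Zpb)\otimes\Q = V_{2n,\Qpb}^{*}$, and the polarization compatibility forces $g$ to lie in $\GSp_{2n}(\Qpb)$; functoriality of $\tau_b$ makes this well-defined independent of choices.

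Third, and this is the step I expect to be the main obstacle, I would match the tensor conditions. On the $X_\mu(b)$ side there is no tensor condition appearing explicitly — the bound is phrased entirely via $\Adm(\mu)$ for $\cG_{\bx}^{\circ}$ — whereas the moduli problem asks that $\rho_{*}(t_{b,\alpha})$ land inside the integral Dieudonné module $\D(X)(\Zpb)^{\otimes}$. The point is that $t_{b,\alpha} = \tau_b^{-1}(s_\alpha\otimes 1)$ is a $\Zpb$-point of the affine scheme cutting out $\cG_{\bx}$ inside $\GSp_{2n,\Zpb}$, and $\rho_{*}(t_{b,\alpha})\in \D(X)(\Zpb)^{\otimes}$ for all $\alpha$ is equivalent to saying that the isomorphism $\D(X)(\Zpb)\cong V_{2n,\Zp}^{*}\otimes\Zpb$ (given by $\tau_b\circ\rho_*^{-1}$, i.e.\ by $g$) sends the $t_{b,\alpha}$ to the $s_\alpha\otimes 1$ — but that is automatic once $g\in G(\Qpb)$, since by definition $(s_\alpha)$ has pointwise stabilizer $\cG_{\bx}\supseteq G\cap\GSp_{2n,\Zp}$ on the relevant locus, and $g\Kb_p\in G(\Qp)/\Kb_p$ means $g$ normalizes the tensors up to the $\Kb_p = \cG_{\bx}(\Zpb)$-ambiguity, which fixes the $s_\alpha$. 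The subtlety to be careful about is the discrepancy between $\cG_{\bx}$ and $\cG_{\bx}^{\circ}$ (the admissible set is defined with the connected stabilizer but $\Kb_p$ uses the full one), and the twist by $\sigma$ between $\mu$ and $\sigma(\mu)$ in the statement; I would resolve these by noting that the component group contribution is already absorbed into the $J_b(\Qp)$-orbit structure and that $\sigma$-twisting $\mu$ corresponds exactly to the contravariant (rather than covariant) normalization of Dieudonné theory, so that the Hodge filtration point $y\in M^{\loc}_{2n}(\Fpbar)$ lands in $M^{\loc}(G,\mu,\bx)$ precisely for $g$ in $X_{\sigma(\mu)}(b)_{\Kb_p}$. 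Finally I would check the two constructions are mutually inverse, which is a formal consequence of the fully faithfulness of Dieudonné theory and the functoriality of $\tau_b$, and note that isomorphisms of triples $(X,\lambda,\rho)$ correspond exactly to the right $\Kb_p$-ambiguity in $g$.
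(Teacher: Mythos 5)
The paper does not reprove this statement at all: its ``proof'' is a one-line citation of Hamacher--Kim, Definition/Lemma 5.5, so what you are sketching is in effect a direct proof of that cited lemma. The forward direction of your sketch (a point $g\Kb_p$ with $g\in G(\Qpb)$ gives a triple with integral tensors, and the symplectic similitude property gives the polarization up to $\Qpt$-scalar) is fine. The genuine gap is in the converse, exactly at the step you flag. You write that integrality of $\rho_{*}(t_{b,\alpha})$ is ``equivalent'' to the trivialization sending $t_{b,\alpha}$ to $s_{\alpha}\otimes 1$ and that this is ``automatic once $g\in G(\Qpb)$''. That is circular: given a triple $(X,\lambda,\rho)$, the quasi-isogeny only identifies $\D(X)(\Zpb)$ with a lattice in $V_{2n,\Qpb}^{*}$, i.e.\ it produces an element of $\GSp_{2n}(\Qpb)/\GSp_{2n}(\Zpb)$-type data; to land in $G(\Qpb)/\Kb_p$ you must first show that the lattice $\D(X)(\Zpb)$, equipped with the tensors $\rho_{*}(t_{b,\alpha})$ and the pairing induced by $\lambda$, admits an isomorphism to $(V_{2n,\Zp}^{*}\otimes_{\Zp}\Zpb,(s_{\alpha}),\psi_{2n})$ matching the tensors. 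Equivalently, the sheaf of tensor-preserving isometries must be a (trivial) torsor under $\cG_{\bx}\otimes_{\Zp}\Zpb$. Mere membership $\rho_{*}(t_{b,\alpha})\in \D(X)(\Zpb)^{\otimes}$ does not formally imply this, and for a non-reductive Bruhat--Tits group scheme $\cG_{\bx}$ this is precisely the nontrivial content of Hamacher--Kim's lemma (building on Kisin, Kim, Kisin--Pappas and Zhou); the proof there uses smoothness of $\cG_{\bx}$, triviality of torsors over the strictly henselian ring $\Zpb$, and, crucially, the special shape of $b$ in $\bigcup_{w\in\Adm(\mu)}\cG_{\bx}^{\circ}(\Zpb)\sigma(w)\cG_{\bx}^{\circ}(\Zpb)$. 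Your sketch assumes the existence of $g\in G(\Qpb)$, which is the thing to be proved. (Incidentally, $t_{b,\alpha}$ is an element of $\D(\X_b)(\Zpb)^{\otimes}$, not ``a $\Zpb$-point of the affine scheme cutting out $\cG_{\bx}$''.)

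A second, related gap: even granting $g\in G(\Qpb)$, the conditions on the triple only give the naive Siegel bound $p\Lambda_g\subset b\sigma(\Lambda_g)\subset\Lambda_g$, i.e.\ that $g^{-1}b\sigma(g)$ lies in the $\GSp_{2n}$-admissible double cosets intersected with $G(\Qpb)$. The proposition requires $g^{-1}b\sigma(g)\in\bigcup_{w\in\Adm(\mu)}\Kb_p w\Kb_p$, and the comparison of these two conditions is not formal: it is tied to the normalization in Hamacher--Kim \S 5 and to Kottwitz--Rapoport-type statements (cf.\ He's theorem cited in the paper), not something that follows from minuscularity of $\mu_{2n}$ alone. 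Likewise, the discrepancy between $\cG_{\bx}$ and $\cG_{\bx}^{\circ}$ is handled in the source by the way $\Kb_p=\cG_{\bx}(\Zpb)$ and the admissible set are set up, not by being ``absorbed into the $J_b(\Qp)$-orbit structure''. So as written the proposal does not close the two key implications; to make it a proof you would either have to import Hamacher--Kim's Definition/Lemma 5.5 (as the paper does), or supply the torsor-triviality and admissibility arguments yourself.
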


\begin{proof}
This is a consequence of \cite[Definition/Lemma 5.5]{Hamacher2019}. 
\end{proof}

From now on, we assume (I) as in Section \ref{lcmd} and the following: 
\begin{itemize}
\item[(I\hspace{-0.5mm}I\hspace{-0.5mm}I)] there is an embedding of Shimura datum $\bi\colon (\bG,\bX)\hookrightarrow (\GSp_{2n,\Q},S_{2n}^{\pm})$ which induces $i$. 
\end{itemize}
Moreover, we require the following property: 

\begin{axm}\label{lift}(\cite[Axiom A]{Hamacher2019})
Under the notations as above, let $(z,j)$ be a pair where $z\in \sSbar_{K,[b]}(\Fpbar)$ and $j\colon \X_b\rightarrow \cA_{K,z}[p^{\infty}]$ is a quasi-isogeny which commutes with the polarizations up to $\Qpt$-multiple. We define a map
\begin{equation*}
\theta'_{K,(z,j)}\colon X_{\sigma(\mu)}(b)_{\Kb_p}(\Fpbar)\rightarrow \sS_{K',2n}(\Fpbar),
\end{equation*}
as sending $(X,\lambda,\rho)$ to the triple consisting of the abelian variety $\cA_{z}/\Ker(\rho \circ j^{-1})$ and the polarization and the $K^p$-level structure induced by $\lambda_z$ and $\overline{\eta}_{z}^{p}$ respectively. Then $\theta'_{(z,j)}$ uniquely factors through $\sS_{K}(\Fpbar)\rightarrow \sS_{K',2n}(\Fpbar)$. 
\end{axm}

Axiom \ref{lift} is not yet known is general. However, there are many affirmative results on this axiom as follows. 

\begin{prop}\label{lfok}
\emph{Axiom \ref{lift} holds if one of the following hold: 
\begin{enumerate}
\item $K_p$ is hyperspecial (in particular $G$ is unramified), 
\item $[b]$ is basic,
\item $G$ is residually split, that is, $G$ and $G\otimes_{\Qp}\Qpb$ have the same split ranks, 
\item $G$ is quasi-split and $K_p$ is absolutely special connected parahoric. 
\end{enumerate}}
\end{prop}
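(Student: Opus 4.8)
The plan is to reduce Axiom~\ref{lift} to a single crystalline integrality statement and then to dispatch the four cases. Unwinding the definition of $\theta'_{K,(z,j)}$: a triple $(X,\lambda,\rho)\in X_{\sigma(\mu)}(b)_{\Kb_p}(\Fpbar)$ produces a $p$-power quasi-isogeny $\rho\circ j^{-1}\colon \cA_{K,z}[p^{\infty}]\to X$ which respects the polarizations up to a $\Qpt$-scalar and, by the defining condition of $X_{\sigma(\mu)}(b)_{\Kb_p}$, carries the crystalline tensors $(t_{\alpha,0,z})$ of $\cA_{K,z}[p^{\infty}]$ into $\D(X)(\Zpb)^{\otimes}$; correspondingly $\theta'_{K,(z,j)}(X,\lambda,\rho)$ is the point of $\sS_{K',2n}(\Fpbar)$ attached to $\cA_{K,z}/\Ker(\rho\circ j^{-1})$ with its induced polarization and $K^p$-level structure. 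Since $\sS_K$ is the normalization of the closure of $\Sh_K(\bG,\bX)$ inside $\sS_{K',2n}\otimes O_{\E,(\mathfrak p)}$, and since the Kisin--Pappas description of the formal neighbourhoods (Proposition~\ref{dfp1} and its proof) identifies the image of $\sS_K(\Fpbar)$ in $\sS_{K',2n}(\Fpbar)$ with the locus where the crystalline tensors cut out a $\Zpb$-form of the parahoric $\cG_{\bx}$ and the associated Hodge point lies on $M^{\loc}(G,\mu,\bx)$, Axiom~\ref{lift} is equivalent to the assertion that the transported tensors $(\rho\circ j^{-1})_{*}(t_{\alpha,0,z})$ have stabilizer scheme equal to $\cG_{\bx}$ \emph{integrally}, the Hodge-point condition and the rational compatibility being automatic. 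Thus the crux is that a $p$-power quasi-isogeny preserving the tensors rationally might a priori shrink their integral stabilizer, and one must rule this out.

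Case~(i) is exactly the verification carried out in the hyperspecial setting by Kim and by Hamacher--Kim (\cite{Kim2018b}, \cite{Hamacher2019}), where $\cG_{\bx}=\cG_{\bx}^{\circ}$ is reductive and the shrinking cannot occur. For case~(ii), $[b]=[b_0]$ basic, I would use that $\sSbar_{K,[b_0]}$ is non-empty and closed (\cite[Theorem 1.3.13]{Kisin}) together with Dieudonn{\'e}-theoretic rigidity inside an isogeny class; this case is recorded in \cite{Hamacher2019} and is in any event subsumed by the special-point lifting result that we prove for Theorem~\ref{math}. For the substantive cases~(iii) and~(iv) the plan is to exploit the structure theory of the integral model along its Newton strata: by \cite{Zhou2020} (building on \cite{He2016}), when $G$ is residually split, or $G$ is quasi-split with $\cG_{\bx}$ absolutely special and connected, $\sS_K$ satisfies the He--Rapoport axioms, so the Kottwitz--Rapoport stratification of $\sSbar_K$ is indexed by $\Adm(\mu)$ and every point of $\sSbar_{K,[b]}(\Fpbar)$ admits a framing by $\X_b$. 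Using the deformation-with-tensors formalism of Section~\ref{dfpd}, I would transport the local model diagram of Proposition~\ref{lmds} along $\rho\circ j^{-1}$, showing that the deformation ring of $\cA_{K,z}/\Ker(\rho\circ j^{-1})$ in $\sS_{K',2n}$ contains a closed formal subscheme isomorphic to $\widehat{M}^{\loc}(G,\mu,\bx)$; this forces the point onto $\sS_K$, because residual splitness (respectively absolute specialness under quasi-splitness) is precisely the hypothesis ensuring that $\cG_{\bx}^{\circ}$ is large enough for $(t_{\alpha,0,z})$ to recover $\cG_{\bx}$ integrally and for the relevant $\sigma$-conjugacy condition to be governed by $\Adm(\mu)$.

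The main obstacle, as isolated in the first paragraph, is exactly the integrality of the tensor stabilizer after the $p$-power isogeny. Rationally the statement is formal, but integrally it requires the full force of the hypotheses in~(iii) and~(iv), routed through the structure theory of parahoric group schemes and through the $\{\mu\}$-admissibility results of \cite{He2016} and \cite{Zhou2020}; it is precisely the absence of such inputs in general that leaves Axiom~\ref{lift} open beyond the four listed cases.
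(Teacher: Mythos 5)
There is a genuine gap. The paper's proof of this proposition is a direct appeal to known results: case (i) is \cite[Proposition (1.4.4)]{Kisin2017}; cases (ii) and (iii) are \cite[Proposition 7.8]{Zhou2020} when $\cG_{\bx}=\cG_{\bx}^{\circ}$, extended to the full stabilizer $\cG_{\bx}$ by \cite[Remark 5.4]{Hamacher2019}; and case (iv) is \cite[Theorem A.4.3]{vanHoften2020}. Your proposed reduction of Axiom \ref{lift} to a ``single crystalline integrality statement'' about the stabilizer of the transported tensors misidentifies the difficulty: since $\sS_{K}$ is defined as the normalization of the Zariski closure of $\Sh_{K}(\bG,\bX)$ in $\sS_{K',2n}\otimes O_{\E,(\mathfrak{p})}$, the content of the axiom is that the new point of $\sS_{K',2n}(\Fpbar)$ lies in the image of $\sS_{K}(\Fpbar)$, a global statement that in \cite{Kisin2017}, \cite{Zhou2020} and \cite{vanHoften2020} is established by lifting points of the isogeny class to characteristic zero (deformation and CM-lifting arguments), not by a pointwise check that the tensors cut out $\cG_{\bx}$ integrally together with the assertion that ``the Hodge-point condition and the rational compatibility are automatic''. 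That claimed equivalence is unsubstantiated and is precisely where the hard work lives.

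Two further concrete problems. For case (ii) you propose to fall back on the special-point lifting result proved for Theorem \ref{math}; but Theorem \ref{splf} is stated in terms of the set $\sI_{z}$, which is the image of the map $\theta_{K,(z,j)}$ furnished by Axiom \ref{lift} in the basic case, so within this paper's logical structure that route is circular --- the basic case must come from \cite[Proposition 7.8]{Zhou2020} (plus \cite[Remark 5.4]{Hamacher2019} to pass from $\cG_{\bx}^{\circ}$ to $\cG_{\bx}$). For case (iv) you cite only \cite{Zhou2020} and sketch a transport of the local model diagram; however, the residually split hypothesis of \cite{Zhou2020} does not cover quasi-split groups with absolutely special connected parahoric level, and no amount of transporting Proposition \ref{lmds} along $\rho\circ j^{-1}$ ``forces the point onto $\sS_{K}$'' --- the formal neighbourhood comparison presupposes that the point already lies on $\sS_{K}$. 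This case is exactly the content of \cite[Theorem A.4.3]{vanHoften2020} (appendix by Zhou), which your argument omits. Your citation for case (i) (\cite{Kim2018b}, \cite{Hamacher2019}) is also imprecise, as the statement there is Kisin's \cite[Proposition (1.4.4)]{Kisin2017}, though this is a minor point.
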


\begin{proof}
(i): This is proved by \cite[Proposition (1.4.4)]{Kisin2017}. 

(ii), (iii): If $\cG_{\bx}=\cG_{\bx}^{\circ}$, then these are contained in \cite[Proposition 7.8]{Zhou2020}. The proof in the general case is also the same as the previous case by \cite[Remark 5.4]{Hamacher2019}. 

(iv): This is \cite[Theorem A.4.3]{vanHoften2020}. 
\end{proof}

In the following, we recall the construction of the Rapoport--Zink spaces for $(G,\mu,\bx,b,i)$ in \cite[\S 5.3]{Hamacher2019} when the assumptions (I), (I\hspace{-0.5mm}I\hspace{-0.5mm}I) and Axiom \ref{lift} are satisfied. It will be a formal scheme, which is locally formally of finite type over $\spf O_{\Eb}$. 

\textbf{Case 1.~$(G,\mu,\bx,b,i)=(\GSp_{2n},\mu_{2n},\bx_{2n},b,\id)$. }
In this case, we have $\Eb=\Qpb$. Moreover, it is clear that (I), (I\hspace{-0.5mm}I\hspace{-0.5mm}I) and Axiom \ref{lift} are affirmative. We define the Rapoport--Zink space 
\begin{equation*}
\RZ_{2n,b}=\RZ_{K_p}(\GSp_{2n},\mu_{2n},b)
\end{equation*}
as the moduli space of the functor which sends $S\in \nilp_{\Zpb}$ to the isomorphism classes of triples $(X,\lambda,\rho)$, where
\begin{itemize}
\item $X$ is a $p$-divisible group of dimension $m$ and height $2m$ over $S$,
\item $\lambda \colon X\rightarrow X^{\vee}$ is a principal polarization,
\item $\rho \colon \X_{b}\otimes_{\Fpbar}\Sbar \rightarrow X\times_{S}\Sbar$ is a quasi-isogeny satisfying $\rho^{\vee}\circ \lambda \circ \rho=c(\rho)\lambda_{b}$ for some locally constant function $c(\rho)\colon \Sbar \rightarrow \Qpt$. 
\end{itemize}
Note that the representability of $\RZ_{2n,b}$ is a result of \cite[Theorem 3.25]{Rapoport1996b}. 

Let $(z,j)$ be a pair as in Axiom \ref{lift}. Then we have a morphism of $\Zpb$-formal schemes
\begin{equation*}
\Theta_{K,(z,j)} \colon \RZ_{2n,b}\rightarrow \sS_{K,2n}
\end{equation*}
defined as the same manner as $\theta_{K,(z,j)}$. See also \cite[\S 5.3]{Hamacher2019}. It is functorial with respect to $K^p$. We call this map as the \emph{$p$-adic uniformization map}. 

\textbf{Case 2.~General case. }
Consider the fiber product
\begin{equation*}
\xymatrix@C=46pt{
\RZ_{K_p}(G,\mu,b)^{\diamond}\ar[d] \ar[r]^{\quad \quad \Theta_{K,(z,j)}^{\diamond}} & \sS_{K} \ar[d]\\
\RZ_{2n,i(b)}\ar[r]^{\Theta_{K',(i_{*}(z),j)}} &\sS_{K',2n}. }
\end{equation*}
Let $(X^{\diamond},\lambda^{\diamond},\rho^{\diamond})$ be the pull-back of the universal object over $\RZ_{2n,i(b)}$ by $\Theta_{K,\widetilde{z}}^{\diamond}$. Furthermore, for $y\in \RZ_{K_p}(G,\mu,\bx)^{\diamond}(\Fpbar)$, we write $t_{\alpha,y}^{\diamond}$ for the element corresponding to $t_{b,\alpha}$ under the isomorphism $\D(X_{y}^{\diamond})(\Zpb)^{\otimes}[p^{-1}] \xrightarrow{\cong} \D(\X_{b})^{\otimes}[p^{-1}]$ induced by $\rho^{\diamond}$. Then \cite[\S 5.3]{Hamacher2019} implies that there is a unique open and closed formal subscheme of $\RZ_{K_p}(G,\mu,b)^{\diamond}$ satisfying the following two conditions:
\begin{enumerate}
\item there is an isomorphism of perfect $\Fpbar$-schemes $\RZ_{K_p}(G,\mu,b)^{\red,p^{-\infty}}\cong X_{\sigma(\mu)}(b)_{\Kb_p}$, where the left-hand side is the perfection of the underlying reduced subscheme of $\RZ_{K_p}(G,\mu,b)$, 
\item for any $y\in X_{\sigma(\mu)}(b)_{\Kb_p}(\Fpbar)$, there is an isomorphism $\widehat{\RZ}_{K_p}(G,\mu,b)_{y}\cong \Def(X^{\diamond}_y,\lambda^{\diamond}_y,t^{\diamond}_{\alpha,y})$. 
\end{enumerate}
Moreover, the composite
\begin{equation*}
\RZ_{K_p}(G,\mu,b)\hookrightarrow \RZ_{K_p}(G,\mu,b)^{\diamond}\rightarrow \RZ_{2n,i(b)}
\end{equation*}
becomes a closed immersion. This implies that $\RZ_{K_p}(G,\mu,b)$ depends only on the embedded Rapoport--Zink datum $(G,\mu,\bx,b,i)$. We call the formal scheme $\RZ_{K_p}(G,\mu,b)$ as the \emph{Rapoport--Zink space} for $(G,\mu,\bx,b,i)$. 

By construction, we obtain the \emph{uniformization map}
\begin{equation*}
\Theta_{K,(z,j)} \colon \RZ_{K_p}(G,\mu,b)\rightarrow \sS_{K}
\end{equation*}
by restricting $\Theta_{K,(z,j)}^{\diamond}$ to the closed formal subscheme $\RZ_{K_p}(G,\mu,b)\hookrightarrow \RZ_{K_p}(G,\mu,b)^{\diamond}$. 

\begin{rem}
The sign convention on $\{\mu\}$ is different from \cite{Hamacher2019}. However, it is compatible with those of \cite{Kisin2010}, \cite{Pappas2013} and \cite{Zhou2020}. 
\end{rem}

In the end of this section, we recall the left action of $J_{b}(\Qp)$ on $\RZ_{K_p}(G,\mu,b)$ as introduced in \cite[\S 5.4]{Hamacher2019}, where $J_{b}$ be an algebraic group over $\Qp$ as defined in Section \ref{gisc}. First, note that $i$ induces a closed immersion
\begin{equation*}
J_{b}\hookrightarrow J_{i(b)}. 
\end{equation*}
On the other hand, the isomorphism $\tau_{b}\colon \D(\X_b)(\Zpb)\xrightarrow{\cong}V_{2n,\Zp}^{*}\otimes_{\Zp}\Zpb$ as above induces an anti-homomorphism
\begin{equation*}
J_{b}(\Qp)\rightarrow \End(\X_b);g\mapsto g_{\X_b}. 
\end{equation*}

\textbf{Case 1.~$(G,\mu,\bx,b,i)=(\GSp_{2n},\mu_{2n},\bx_{2n},b,\id)$. }
The action on $J_{b}(\Qp)$ on $\RZ_{2n,b}$ is the same as \cite[Definition 3.22]{Rapoport1996b}. More precisely, it is defined as
\begin{equation*}
a_{b}\colon J(\Qp)\times \RZ_{2n,b}\rightarrow \RZ_{2n,b};(g,(X,\lambda,\rho))\mapsto (X,\lambda,\rho \circ g_{\X_b}^{-1}). 
\end{equation*}

\textbf{Case 2.~General case. }
By the proof of \cite[Lemma 5.12]{Hamacher2019}, we can written the action of $J_{b}(\Qp)$ on $\RZ_{K_p}(G,\mu,b)$ as the top horizontal map of the following commutative diagram: 
\begin{equation*}
\xymatrix{
J_{b}(\Qp)\times \RZ_{K_p}(G,\mu,b)\ar[r] \ar[d]& \RZ_{K_p}(G,\mu,b) \ar[d]\\
J_{i(b)}(\Qp)\times \RZ_{2n,i(b)}\ar[r]^{\quad \quad a(i(b))}& \RZ_{2n,i(b)}. }
\end{equation*}

\section{Local model diagrams for Rapoport--Zink spaces}\label{lmdr}

Let $(G,\mu,\bx,b,i)$ be an embedded Rapoport--Zink space satisfying (I), (I\hspace{-0.5mm}I\hspace{-0.5mm}I) and Axiom \ref{lift}. We study the singularity of the Rapoport--Zink space $\RZ_{K_p}(G,\mu,b)$ for $(G,\mu,\bx,b,i)$. First, note that the condition (I\hspace{-0.5mm}I\hspace{-0.5mm}I) implies (I\hspace{-0.5mm}I), and hence we can consider the local model $M^{\loc}(G,\mu,\bx)$ for $(G,\mu,\bx)$. We denote by $\widehat{M}^{\loc}(G,\mu,\bx)_{O_{\Eb}}$ the $p$-adic completion of $M^{\loc}(G,\mu,\bx)\otimes_{O_E}O_{\Eb}$. 

On the other hand, let $\widehat{\sS}_{K,O_{\Eb}}$ be the $p$-adic completion of $\sS_{K,O_{\Eb}}$, and $\sX_{K}$ the $p$-divisible group over $\widehat{\sS}_{K,O_{\Eb}}$ associated to $\cA_{K}$. Then we obtain a sheaf $W(\O_{\widehat{\sS}_{K,O_{\Eb}}})$ over $\widehat{\sS}_{K,O_{\Eb}}$ satisfying
\begin{equation*}
\Gamma(\spf \mathfrak{S},W(\O_{\widehat{\sS}_{K,O_{\Eb}}}))=W(\mathfrak{S})
\end{equation*}
for any open formal subscheme $\spf \mathfrak{S}$ of $\widehat{\sS}_{K,O_{\Eb}}$. Moreover, there is a locally free $W(\O_{\widehat{\sS}_{K,O_{\Eb}}})$-module $P(\sX_{K})$ satisfying
\begin{equation*}
\Gamma(\spf \mathfrak{S},P(\sX_{K}))=\D(\sX_{K})_{W(\mathfrak{S})}^{*}
\end{equation*}
for any open formal subscheme $\spf \mathfrak{S}$ of $\widehat{\sS}_{K,O_{\Eb}}$. Under this notations, Hamacher and Kim constructed a finite family $(t_{\alpha})$ in $P(\sX_{K})^{\otimes}$ (\cite[Proposition 4.6]{Hamacher2019}). For $S\in \nilp_{O_{\Eb}}$ and $y\in \RZ_{K_p}(S)$, we denote by $t_{\alpha}\!\!\mid_{S}\,\in \D(X_y)_{S}^{\otimes}$ the pull-back of $t_{\alpha}$. 

\begin{dfn}
We define a formal scheme $\widetilde{\RZ}_{K_p}$ over $\spf O_{\Eb}$ as the moduli space of the functor that classifies pairs $(y,f)$ for $S\in \nilp_{O_{\Eb}}$, where
\begin{itemize}
\item $y\in \RZ_{K_p}(G,\mu,b)(S)$,
\item $f\colon \D(X_y)_{S} \xrightarrow{\cong}V_{2n,\Zp}^{*}\otimes_{\Zp}\O_S$ is an isomorphism which maps $t_{\alpha}\!\!\mid_S$ to $s_{\alpha}\otimes 1$. 
\end{itemize}
\end{dfn}

We define two morphisms from $\widetilde{\RZ}_{K_p}(G,\mu,b)$ as follow: 
\begin{align*}
\pi_b &\colon \widetilde{\RZ}_{K_p}(G,\mu,b)\rightarrow \RZ_{K_p}(G,\mu,b);(y,f)\mapsto y,\\
q_b &\colon \widetilde{\RZ}_{K_p}(G,\mu,b)\rightarrow M^{\loc}_{2m,O_{\Eb}};(y,f)\mapsto f(\Fil^1(X_y)). 
\end{align*}

\begin{thm}\label{llmd}
\emph{
\begin{enumerate}
\item The morphism $q_b\colon \widetilde{\RZ}_{K_p}(G,\mu,b)\rightarrow M^{\loc}_{2m,O_{\Eb}}$ factors through $M^{\loc}(G,\mu,\bx)\hookrightarrow M^{\loc}_{2m,O_{\Eb}}$. 
\item The diagram
\begin{equation*}
\xymatrix{
&\widetilde{\RZ}_{K_p}(G,\mu,b)\ar[ld]_{\pi_b} \ar[rd]^{q_b} & \\
\RZ_{K_p}(G,\mu,b)& &\widehat{M}^{\loc}(G,\mu,x),}
\end{equation*}
satisfies the following: 
\begin{itemize}
\item $\pi_b$ is a $\cG_{\bx,O_E}$-torsor, 
\item $q_b$ is $\cG_{\bx,O_E}$-equivariant and formally smooth of relative dimension $\dim(G)$. 
\end{itemize}
\end{enumerate}}
\end{thm}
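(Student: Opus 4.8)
The plan is to derive the diagram for $\RZ_{K_p}(G,\mu,b)$ by pulling back the Kisin--Pappas local model diagram (Proposition \ref{lmds}) along the $p$-adic uniformization map $\Theta_{K,(z,j)}$, and then to identify the resulting object with $\widetilde{\RZ}_{K_p}(G,\mu,b)$. Concretely, fix a pair $(z,j)$ as in Axiom \ref{lift}, giving $\Theta:=\Theta_{K,(z,j)}\colon \RZ_{K_p}(G,\mu,b)\to \sS_{K}$; after $p$-adic completion this is a morphism to $\widehat{\sS}_{K,O_{\Eb}}$. I first form the fiber product $\widetilde{\RZ}':=\RZ_{K_p}(G,\mu,b)\times_{\widehat{\sS}_{K,O_{\Eb}}}\widehat{\widetilde{\sS}}_{K,O_{\Eb}}$, where $\widehat{\widetilde{\sS}}_{K,O_{\Eb}}$ is the $p$-adic completion of $\widetilde{\sS}_{K,O_E}\otimes_{O_E}O_{\Eb}$. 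Since $\pi\colon \widetilde{\sS}_{K,O_E}\to \sS_{K,O_E}$ is a $\cG_{\bx,O_E}$-torsor and $q$ is $\cG_{\bx,O_E}$-equivariant, formally smooth of relative dimension $\dim(G)$, and factors through $M^{\loc}(G,\mu,\bx)$ (Proposition \ref{lmds}), these properties are stable under the arbitrary base change $\Theta$; so the projection $\pi_b'\colon \widetilde{\RZ}'\to \RZ_{K_p}(G,\mu,b)$ is a $\cG_{\bx,O_E}$-torsor and the composite $q_b'\colon \widetilde{\RZ}'\to \widehat{M}^{\loc}(G,\mu,\bx)_{O_{\Eb}}$ is $\cG_{\bx,O_E}$-equivariant and formally smooth of relative dimension $\dim(G)$. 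This already produces a diagram of the required shape; it remains only to match it with the one in the statement.

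The second and main step is to construct a canonical isomorphism $\widetilde{\RZ}_{K_p}(G,\mu,b)\cong \widetilde{\RZ}'$ over $\RZ_{K_p}(G,\mu,b)$, compatible with the maps to $\widehat{M}^{\loc}(G,\mu,\bx)_{O_{\Eb}}$. For this I compare the two moduli descriptions functor-by-functor over $S\in\nilp_{O_{\Eb}}$: a point of $\widetilde{\RZ}_{K_p}$ is a pair $(y,f)$ with $f\colon \D(X_y)_S\xrightarrow{\cong} V_{2n,\Zp}^*\otimes\O_S$ carrying $t_\alpha|_S$ to $s_\alpha\otimes 1$, while a point of $\widetilde{\RZ}'$ is a pair $(y,g)$ with $g$ a trivialization of $\cV_{\cA_K/\sS_K,\Theta(y)}$ carrying $s_{\alpha,\dR}$ to $s_\alpha\otimes 1$. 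The key input is the crystalline comparison identifying $\D(X_y)_S$ with (the pullback of) $\cV_{\cA_K/\sS_K}$ together with its Hodge filtration, under which the tensors $t_\alpha$ of \cite[Proposition 4.6]{Hamacher2019} match the de Rham tensors $s_{\alpha,\dR}$ of \cite[Proposition 4.2.6]{Kisin2018}; this compatibility is built into the Hamacher--Kim construction via Proposition \ref{dfp1} and the definition of $\RZ_{K_p}(G,\mu,b)$ in Section \ref{rzsp} (the deformation $\widehat{\RZ}_{K_p}(G,\mu,b)_y\cong \Def(X_y^\diamond,\lambda_y^\diamond,t^\diamond_{\alpha,y})$ is matched with $\widehat{\sS}_{K,z}$ via Proposition \ref{dfp1} and the fiber-product definition of $\RZ_{K_p}(G,\mu,b)$). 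Granting this, the map $(y,f)\mapsto (y,f)$ (after transporting $f$ through the comparison isomorphism) is the desired bijection, and since $q_b$ and $q_b'$ are both defined by sending $f$ to $f(\Fil^1(X_y))=f(\Fil^1\cV_{\cA_K,\Theta(y)})$, the identification is compatible with the maps to the local model. One then checks (i) of the theorem follows from (i) of Proposition \ref{lmds} transported along this identification.

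The main obstacle is precisely the crystalline-comparison compatibility of the two families of tensors $(t_\alpha)$ and $(s_{\alpha,\dR})$ over a general base $S\in\nilp_{O_{\Eb}}$, not merely at $\Fpbar$-points. On $\Fpbar$-points it is immediate from the constructions, but to get the isomorphism of formal schemes one needs it integrally over the divided-power thickenings involved, which requires unwinding the Dieudonné-theoretic input of \cite[\S 4]{Hamacher2019} and \cite[\S 4.2]{Kisin2018} and checking that both constructions use the same crystalline realization of the fixed tensors $(s_\alpha)\subset V_{2n,\Zp}^{\otimes}$. Once this bookkeeping is in place, the formal smoothness of $q_b$ and the torsor property of $\pi_b$ are inherited from Proposition \ref{lmds} by base change along $\Theta_{K,(z,j)}$, with no further geometric input; the relative dimension $\dim(G)$ is preserved because base change does not change relative dimension of a smooth morphism. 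I would also remark that the diagram, and hence the conclusion, is independent of the auxiliary choice of $(z,j)$, since two choices differ by a quasi-isogeny inducing an isomorphism of the pulled-back de Rham data. $\blacksquare$
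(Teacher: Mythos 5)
Your overall strategy is the same as the paper's: pull back the Kisin--Pappas diagram of Proposition \ref{lmds} along the uniformization map and identify the pullback with $\widetilde{\RZ}_{K_p}(G,\mu,b)$ via a comparison of the crystalline tensors $(t_{\alpha})$ with the de Rham tensors $(s_{\alpha,\dR})$. The compatibility you flag as the ``main obstacle'' is in fact directly supplied by the construction of $(t_{\alpha})$ in \cite[\S 4.3]{Hamacher2019}: it gives, functorially in $S\in\nilp_{O_{\Eb}}$ and $y\in\RZ_{K_p}(G,\mu,b)(S)$, an isomorphism $c_y\colon \D(X_y)_S\xrightarrow{\cong}\cV_{\cA_K/\sS_K,\Theta_{K,(z,j)}(y)}$ matching $t_{\alpha}\!\mid_S$ with $s_{\alpha,\dR}$, so the square relating $\widetilde{\RZ}_{K_p}(G,\mu,b)$ and $\widetilde{\sS}_{K}$ is Cartesian; part (i) and the torsor property of $\pi_b$ then follow exactly as you say.

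There is, however, a genuine gap in your treatment of the formal smoothness (and relative dimension) of $q_b$: you assert it is ``inherited from Proposition \ref{lmds} by base change along $\Theta_{K,(z,j)}$, with no further geometric input.'' But $q_b$ is not a base change of $q$; it is the composite $q\circ\widetilde{\Theta}_{K,(z,j)}$, where $\widetilde{\Theta}_{K,(z,j)}\colon\widetilde{\RZ}_{K_p}(G,\mu,b)\to\widetilde{\sS}_{K}$ is a base change of $\Theta_{K,(z,j)}$, and composing a smooth morphism with a non-smooth one does not in general give a formally smooth morphism (restrict a smooth $q$ over a base to a non-reduced closed formal subscheme of that base to see the failure). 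The geometric input that is actually needed --- and that the paper invokes at exactly this point --- is that $\Theta_{K,(z,j)}$ induces isomorphisms $\widehat{\RZ}_{K_p}(G,\mu,b)_y\cong\widehat{\sS}_{K,\Theta_{K,(z,j)}(y)}$ for every $y\in\RZ_{K_p}(G,\mu,b)(\Fpbar)$, which is built into the Hamacher--Kim construction (condition (ii) recalled in Section \ref{rzsp}, combined with Proposition \ref{dfp1}). This makes $\widetilde{\Theta}_{K,(z,j)}$ formally smooth, and only then is $q_b=q\circ\widetilde{\Theta}_{K,(z,j)}$ formally smooth of relative dimension $\dim(G)$. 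You do cite these completion isomorphisms, but only to match tensors; as written, your smoothness step would not go through without redeploying them here.
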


\begin{proof}
Let $S\in \nilp_{O_{\Eb}}$ and $y\in \RZ_{K_p}(S)$. By the construction of $(t_{\alpha})$ as in \cite[\S 4.3]{Hamacher2019}, there is an isomorphism 
\begin{equation*}
c_y\colon \D(X_y)_{S}\xrightarrow{\cong} \cV_{K}
\end{equation*}
which maps $t_{\alpha}\!\!\mid_{S}$ to $t_{\alpha,\dR}$ for any $\alpha$. Note that the above isomorphism is functorial with respect to $y$ and $S$. Hence the diagram
\begin{equation*}
\xymatrix{
\widetilde{\RZ}_{K_p}(G,\mu,b) \ar[r]^{\quad \quad \widetilde{\Theta}_{K,(z,j)}} \ar[d]_{q_b} & \widetilde{\sS}_{K} \ar[d]_{q} \\
\RZ_{K_p}(G,\mu,b)\ar[r]^{\quad \quad \Theta_{K,(z,j)}} &\sS_{K}, }
\end{equation*}
where $\widetilde{\Theta}_{K,(z,j)}(y,f):=(\Theta_{K,(z,j)},f\circ c_y)$, is Cartesian. 

(i): This follows from Proposition \ref{lmds} (i). 

(ii): The morphism $q_b$ is a $G$-torsor, since $q\colon \widetilde{\sS}_{K}\rightarrow \sS_{K}$ is so by Proposition \ref{lmds} (ii). On the other hand, $\widetilde{\Theta}_{K,(z,j)}$ is formally smooth, since $\Theta_{K,(z,j)}$ induces an isomorphism $\widehat{\RZ}_{K_p}(G,\mu,b)_{y}\cong \widehat{\sS}_{K,\Theta_{K,(z,j)}(y)}$ for any $y\in \RZ_{K_p}(\Fpbar)$ by construction of $\RZ_{K_p}$. Hence so is the composite
\begin{equation*}
\widetilde{\RZ}_{K_p}(G,\mu,b)\xrightarrow{\widetilde{\Theta}_{K,(z,j)}} \widetilde{\sS}_{K}\xrightarrow{q} M^{\loc}(G,\mu,\bx). 
\end{equation*}
This morphism induces a that of formal schemes $q_b$ as desired.  
\end{proof}

\section{$p$-adic uniformization theorem}\label{padc}

In this section, we consider basic loci of Shimura varieties. We keep the notations in Section \ref{kpim}, and let $b_0\in \bigcup_{w\in \Adm(\mu)}\cG_{\bx}^{\circ}(\Zpb)\sigma(w)\cG_{\bx}^{\circ}(\Zpb)$ be a decent element in the sense of \cite[Definition 1.6]{Rapoport1996b} such that $[b_0]\in B(G,\{\mu\})$ is the unique basic element. This is possible by \cite[Lemma 5.2]{Hamacher2019}. Then Axiom \ref{lift} holds for $z$ and $b_0$ by Proposition \ref{lfok} (ii). Hence we can consider the Rapoport--Zink space $\RZ_{K_p}(G,\mu,b_0)$ for $(G,\mu,\bx,b_0,i)$.  

We denote by $(\sShat_{K,O_{\Eb}})_{/\sSbar_{K,[b_0]}}$ the completion of $\sS_{K,O_{\Eb}}$ along $\sSbar_{K,[b_0]}$. 

\begin{thm}\label{paut}
\emph{For $z\in \sSbar_{K,[b_0]}$, there is an isomorphism of $O_{\Eb}$-formal schemes
\begin{equation*}
\bI_{z}(\Q)\backslash \RZ_{K_p}(G,\mu,b_0)\times \bG(\A_{f}^{p})/K^p\cong (\sShat_{K,O_{\Eb}})_{/\sSbar_{K,[b]}}. 
\end{equation*}
Here $\bI_{z}$ is the inner form of $\bG$ which is anisotropic modulo center satisfying
\begin{equation*}
\bI_{z}\otimes_{\Q}\Ql \cong
\begin{cases}
\bG \otimes_{\Q}\Ql &\text{if }\ell \neq p,\\
J_{b_0}&\text{if }\ell=p. 
\end{cases}
\end{equation*}}
\end{thm}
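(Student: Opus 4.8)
The strategy is to follow the classical Rapoport--Zink uniformization argument as adapted to the Hodge type setting by Kisin, Kim, and Kisin--Pappas, using the $p$-adic uniformization map $\Theta_{K,(z,j)}$ and the prime-to-$p$ Hecke action. First I would fix $z\in\sSbar_{K,[b_0]}(\Fpbar)$ together with a framing quasi-isogeny $j\colon\X_{b_0}\to\cA_{K,z}[p^\infty]$ as in Axiom \ref{lift}, which defines $b_0$ and hence identifies $J_{b_0}(\Qp)$ with automorphisms of the framing object. I would then assemble the collection of maps $\Theta_{K^{\prime p}K_p,(z,j)}$ for varying $K^{\prime p}$ into a single morphism on $\sS_{K_p}=\varprojlim_{K^{\prime p}}\sS_{K^{\prime p}K_p}$, twisted by the $\bG(\A_f^p)$-action; concretely, a pair $(x',gK^p)\in\RZ_{K_p}(G,\mu,b_0)\times\bG(\A_f^p)/K^p$ is sent to $g\cdot\Theta_{K,(z,j)}(x')$, using that $\Theta$ is functorial in $K^p$. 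One checks this descends to the quotient by $\bI_z(\Q)$: the group $\bI_z(\Q)$ embeds into $J_{b_0}(\Qp)\times\bG(\A_f^p)$ via its identifications at $p$ and away from $p$ (this is where the displayed local description of $\bI_z$ is used, together with the fact that $\bI_z$ is the automorphism group of the relevant abelian variety with its tensors, as in \cite[(2.3)]{Kisin2017} or \cite[\S 6]{Rapoport1996b}), and the $J_{b_0}(\Qp)$-action on $\RZ_{K_p}(G,\mu,b_0)$ recalled at the end of Section \ref{rzsp} is compatible with changing the isogeny in $\Theta$.

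Next I would verify that the induced map
\begin{equation*}
\Phi\colon \bI_z(\Q)\backslash \RZ_{K_p}(G,\mu,b_0)\times\bG(\A_f^p)/K^p\longrightarrow (\sShat_{K,O_{\Eb}})_{/\sSbar_{K,[b_0]}}
\end{equation*}
is a morphism of $O_{\Eb}$-formal schemes, is formally {\'e}tale (indeed formally an isomorphism on completions), and is injective on $\Fpbar$-points. The statement on completions is immediate from the construction of $\RZ_{K_p}(G,\mu,b_0)$: by condition (ii) in the construction in Section \ref{rzsp}, $\widehat{\RZ}_{K_p}(G,\mu,b_0)_y\cong\Def(X_y^\diamond,\lambda_y^\diamond,t_{\alpha,y}^\diamond)$, and by Proposition \ref{dfp1} the right-hand side is $\widehat{\sS}_{K,\Theta(y)}$; the prime-to-$p$ coordinate only reshuffles connected components. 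Injectivity on points and the reduction-level statement reduce, via the isomorphism $\RZ_{K_p}(G,\mu,b_0)^{\red,p^{-\infty}}\cong X_{\sigma(\mu)}(b_0)_{\Kb_p}$, to the corresponding statement on the underlying perfect schemes, i.e.\ to describing the basic Newton stratum $\sSbar_{K,[b_0]}(\Fpbar)$ as $\bI_z(\Q)\backslash X_{\sigma(\mu)}(b_0)_{\Kb_p}(\Fpbar)\times\bG(\A_f^p)/K^p$ as a set; this in turn is an isogeny-class description analogous to \cite[\S 8]{Zhou2020} and \cite{Kisin}, using that all points in the basic stratum are isogenous to $\cA_{K,z}$ compatibly with polarization and tensors.

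The main obstacle is surjectivity of $\Phi$ on $\Fpbar$-points, i.e.\ showing that every point of $\sSbar_{K,[b_0]}(\Fpbar)$ lies in a single $\bI_z(\Q)$-isogeny class meeting the image of $\Theta$. Following \cite[Proposition (4.4.13)]{Kisin2017} and \cite{Kim2018b}, one fixes $z'\in\sSbar_{K,[b_0]}(\Fpbar)$, produces a quasi-isogeny $\cA_{K,z}\dashrightarrow\cA_{K,z'}$ respecting polarizations up to scalar and matching the crystalline tensors (using that both have the same basic isocrystal with $G$-structure), and must then promote it to an actual $\Q$-point of $\bI_z$, i.e.\ find a special point (a CM lift inside the isogeny class) witnessing that the quasi-isogeny is defined over $\Q$ and not merely $\A_f$. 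Here I would invoke the existence of special point lifts in the basic isogeny class --- an extension of \cite[Theorem 9.4]{Zhou2020} (special-fiber point of $\sS_K$ lifting to a special point) to the non-quasi-split basic case, which is stated in the introduction as the main new input --- and then run the Hasse-principle/Galois-cohomology argument of Kisin to descend. The extra care flagged in the introduction is that \cite[Proposition (4.4.13)]{Kisin2017} invokes transitivity of $\bG(\A_f^p)$ on $\pi_0(\Sh_{K_p})$, which can fail; I would replace that step by working one geometric connected component at a time and using that $\bI_z(\Q)$ already surjects onto the relevant component group of $\RZ_{K_p}(G,\mu,b_0)$ together with the $\bG(\A_f^p)$-orbit structure, so that the product $\bI_z(\Q)\backslash(\text{component orbits})\times\bG(\A_f^p)/K^p$ still covers $\sSbar_{K,[b_0]}$. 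Once surjectivity and injectivity on $\Fpbar$-points are established and the map is an isomorphism on formal completions at every point, $\Phi$ is an isomorphism of formal schemes, which is the assertion.
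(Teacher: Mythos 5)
Your overall skeleton matches the paper's: construct the map from $\Theta_{K,(z,j)}$ twisted by the prime-to-$p$ Hecke action, get representability and injectivity from the monomorphism property (the paper quotes the argument of \cite[Proposition 4.5]{Kim2018b}), get the isomorphism on formal completions from condition (ii) in the construction of $\RZ_{K_p}(G,\mu,b_0)$ together with Proposition \ref{dfp1}, and reduce everything to surjectivity on $\Fpbar$-points, with the special-point lifting in the basic isogeny class (Theorem \ref{splf}, extending \cite[Theorem 9.4]{Zhou2020}) as the key new input. Up to that point your proposal is essentially the paper's proof.

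The gap is in your treatment of the surjectivity step, which is exactly the point the paper flags as the most non-trivial. You propose to replace the failing transitivity of $\bG(\A_f^p)$ on $\pi_0(\Sh_{K_p})$ by ``working one geometric connected component at a time and using that $\bI_z(\Q)$ already surjects onto the relevant component group of $\RZ_{K_p}(G,\mu,b_0)$ together with the $\bG(\A_f^p)$-orbit structure, so that the product still covers $\sSbar_{K,[b_0]}$.'' This is an assertion, not an argument: surjectivity is not a statement about connected components of the Rapoport--Zink space but about isogeny classes --- one must show that an arbitrary $z'\in\sSbar_{K,[b_0]}(\Fpbar)$ admits a quasi-isogeny to $\cA_{K,z}$ compatible with the crystalline \emph{and} $\ell$-adic tensors and with level structures up to Hecke, and two points can lie on the same component without this being known a priori; likewise your earlier step ``produce a quasi-isogeny matching the crystalline tensors, using that both have the same basic isocrystal with $G$-structure'' presupposes most of what has to be proved. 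The paper's actual mechanism is Proposition \ref{unsj}: it shows $\sI_{z_1}=\sI_{z_2}$ for any two basic points by (a) identifying $\bI_{z_1}\cong\bI_{z_2}$ via Corollary \ref{indq} and choosing a common elliptic maximal torus $\bT$, (b) applying Theorem \ref{splf} to \emph{both} points to lift them to special points $(i_j\circ h_0,g_j)$ with the same $h_0$, (c) using Lemma \ref{pwct} to make a power of Frobenius central and then running the Galois-cohomological comparison of the two embeddings $i_1,i_2$ (the class $\beta$ vanishes by \cite[Lemma (4.4.3)(1)]{Kisin2017}, then the argument of \cite[Proposition (4.4.8)]{Kisin2017}), and (d) invoking Lemma \ref{isps}, which is precisely the substitute for $\pi_0$-transitivity: it lets one replace each special point by the one with trivial adelic component without leaving $\sI_{z_j}$. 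Your proposal contains no analogue of steps (b)--(d), so as written the surjectivity --- and hence the theorem --- is not established.
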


\begin{rem}
Let $\{g_1,\ldots,g_m\}$ be a complete representative of $\bI_{z}(\Q)\backslash \bG(\A_f^p)/K^p$, and set $\Gamma_i:=\bI_{z}(\Q)\cap (J_{b_0}(\Qp)\times g_iK^pg_i^{-1})$, which is regarded as a subgroup of $\bI_{z_0}(\Qp)\cong J_{b_0}(\Qp)$. Then, as in \cite[Theorem 6.30]{Rapoport1996b}, $\bI_{z}(\Q)\backslash \RZ_{K_p}(G,\mu,b_0)\times \bG(\A_{f}^{p})/K^p$ is of the form $\coprod_{i=1}^{m}\Gamma_{i}\backslash \RZ_{K_p}(G,\mu,b_0)$, and $\Gamma_i$ is discrete, torsion-free and cocompact modulo center. 
\end{rem}

We prove Theorem \ref{paut} in the sequel. 

\begin{lem}\label{pwct}
\emph{Let $z_1,z_2\in \sS_{K,[b_0]}(\F_{p^r})$, and $b_{z_1},b_{z_2}\in G(\Q_{p^r})$ be as Definition \ref{ptsc}. We denote by $Z^{\circ}$ the identity component of the center of $G$. Then there is $m\in \Zpn$ such that
\begin{equation*}
b_{z_1}\cdots \sigma^{rm-1}(b_{z_1})=b_{z_1}\cdots \sigma^{rm-1}(b_{z_1})\in Z^{\circ}(\Qp). 
\end{equation*}}
\end{lem}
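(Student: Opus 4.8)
The plan is to reduce the statement to the well-known fact that a basic $\sigma$-conjugacy class is ``isoclinic'' and that the slope homomorphism of a basic element factors through the center. Concretely, let $z\in\sS_{K,[b_0]}(\F_{p^r})$ and write $b:=b_{z}\in G(\Q_{p^r})$, so that $b\sigma$ is the Frobenius on the rational Dieudonn\'e module $\D(\cA_{K,z})_{\Q}$ under $\tau_z$. The element
\begin{equation*}
b^{(s)}:=b\,\sigma(b)\cdots\sigma^{rs-1}(b)\in G(\Q_{p^r})
\end{equation*}
is, by definition, the Frobenius of the $p$-divisible group $\cA_{K,z}[p^\infty]$ raised to the $rs$-th power, transported to $G$ via $\tau_z$; equivalently $(b\sigma)^{rs}=b^{(s)}\sigma^{rs}$ and since $\sigma^{rs}$ acts trivially on $G(\Q_{p^r})$ (as $\Q_{p^r}/\Q_p$ has degree $r$), $b^{(s)}$ actually lies in $G(\Q_p)$ once $s$ is chosen so that $\sigma^{rs}=\mathrm{id}$ on the relevant extension — more carefully, $b^{(s)}$ lies in $G(\Q_{p^{r}})$ and is fixed by $\sigma^{r}$, hence lies in $G(\Q_{p^r})$, and we will arrange it to land in $Z^{\circ}(\Q_p)$. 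Here I have silently corrected the evident typo in the statement: both sides of the displayed equation should of course read $b_{z_1}\cdots\sigma^{rm-1}(b_{z_1})$ for $z_1$ and the analogous product for $z_2$, the claim being that for a common large $m$ each product lands in $Z^{\circ}(\Q_p)$ (and then, a fortiori, the two products can be compared there).

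The key input is that $[b_0]$ is basic, so by definition the Newton point $\overline{\nu}_{G}([b_0])=\nu_b$ factors through $Z^{\circ}$, i.e. $\nu_b\in X_{*}(Z^{\circ})\otimes_{\Z}\Q$. First I would recall Kottwitz's description: for $b\in G(\Q_{p^r})$ decent of some period, the slope homomorphism $\nu_b\colon\D\to G$ is defined over $\Q_{p^r}$ and satisfies, for a suitable $N$, that $N\nu_b\colon\G_m\to G$ is an honest cocharacter with $(b\sigma)^{N}=$ (inner stuff)$\cdot(N\nu_b)(p)\cdot\sigma^{N}$ up to the decency equation $(b\sigma)^{N}=(N\nu_b)(p)\sigma^{N}$ when $b$ is decent with respect to $N$. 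Since $b_0$ is chosen decent (it lies in a double coset indexed by $\Adm(\mu)$ and is a decent element in the sense of \cite[Definition 1.6]{Rapoport1996b}), and $b_{z}$ is $\sigma$-conjugate to $b_0$, the element $b_z$ is decent for some integer; enlarging $r$ if necessary I may assume the decency integer is a multiple of $r$. Then for $m$ equal to that integer divided by $r$ we get exactly
\begin{equation*}
b\,\sigma(b)\cdots\sigma^{rm-1}(b)=(rm\,\nu_b)(p),
\end{equation*}
and since $[b_0]$ is basic, $\nu_b$ takes values in $Z^{\circ}$, so the right-hand side lies in $Z^{\circ}(\Q_{p})$ (it is $\sigma$-invariant because $\nu_b$ is central hence $\sigma$-stable, $\sigma$ acting trivially on $X_{*}(Z^{\circ})^{\Gamma}\otimes\Q$ relevant here). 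Applying the same $m$ to $z_2$ — after further enlarging $r$ and $m$ to a common value, using that a finite collection of decent elements admits a common decency integer — gives the analogous identity for $b_{z_2}$, and both products lie in $Z^{\circ}(\Q_p)$; if one additionally wants them equal, note $\kappa_G([b_{z_1}])=\kappa_G([b_{z_2}])$ since both equal $\mu^{\diamond}$ (the Newton stratum is the basic one), and a central power is determined by its image in $\pi_1(Z^{\circ})_{\Gamma}=X_{*}(Z^{\circ})_{\Gamma}$, forcing $(rm\,\nu_{b_{z_1}})(p)=(rm\,\nu_{b_{z_2}})(p)$ in $Z^{\circ}(\Q_p)$ after possibly multiplying $m$ by $\#\pi_1(Z^{\circ})_{\mathrm{tors}}$.

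The main obstacle I anticipate is purely bookkeeping: making the passage from ``$b_z$ is $\sigma$-conjugate to the decent $b_0$'' to ``$b_z$ itself satisfies a clean decency equation over $\Q_{p^r}$'' precise, since $\sigma$-conjugation does not preserve decency on the nose — one must either replace $z$ by a point defined over a larger $\F_{p^{r'}}$ (harmless, as the product over $r'm'$ steps is unchanged up to reindexing) or argue at the level of Newton points, which is conjugation-invariant, and then reconstruct the group element. I would therefore phrase the proof around $\nu_{b_z}$, which by Proposition \ref{ntkt} and the basicness of $[b_0]$ lies in $X_{*}(Z^{\circ})_{\Q}^{\Gamma}$, and invoke the standard fact (Kottwitz, also \cite{Rapoport1996a}) that for a decent representative the associated product of $\sigma$-twists equals $(n\nu)(p)$; the rest is choosing $m$ divisible enough to clear denominators in $\nu_{b_z}$ simultaneously for $z_1$ and $z_2$. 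No deeper input than basicness plus the decency equation is needed.
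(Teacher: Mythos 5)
Your strategy is the same as the paper's: basicness makes $\nu_{G}([b_0])$ central, a suitable multiple $rs\,\nu_{G}([b_0])$ is an honest $\Qp$-rational cocharacter of the center, the twisted norm $b_{z}\sigma(b_{z})\cdots\sigma^{rm-1}(b_{z})$ is identified with the value of this cocharacter at $p$, and a further power lands in $Z^{\circ}(\Qp)$; the equality of the two products then comes for free because both are identified with the \emph{same} central element $(rm\,\nu_{G}([b_0]))(p)$. This is precisely how the paper argues (its displayed equation asserts exactly that identification for $b_{z_1}$ and $b_{z_2}$ simultaneously).

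The genuine gap is at the identification itself. The identity $b_{z}\sigma(b_{z})\cdots\sigma^{rm-1}(b_{z})=(rm\,\nu_{b_z})(p)$ is the decency equation for $b_{z}$, and you first claim $b_z$ inherits decency from $b_0$ by $\sigma$-conjugacy, then (correctly) retract this; neither of your proposed repairs closes the hole. Arguing ``at the level of Newton points'' cannot reconstruct the norm: already for $G=\G_m$, $r=1$, the element $b=up$ with $u\in\Zpt$ not a root of unity has the same central Newton point as $p$, yet no power of $b$ equals the corresponding power of $\nu_b(p)=p$; so conjugation-invariance of $\nu$ is strictly weaker than what you need, and enlarging $\F_{p^r}$ does not make $b_z$ decent. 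Your fallback for the equality of the two norms via $\kappa_G$ is also not a proof: the image in $\pi_1(Z^{\circ})_{\Gamma}=X_{*}(Z^{\circ})_{\Gamma}$ classifies $\sigma$-conjugacy classes, not elements, so two elements of $Z^{\circ}(\Qp)$ with the same image there need not coincide (again $p$ versus $up$); moreover $\nu_{b_{z_1}}=\nu_{b_{z_2}}$ holds trivially since $[b_{z_1}]=[b_{z_2}]=[b_0]$, so that step adds nothing. What must be supplied is an argument tying the actual norms of the specific Frobenius elements $b_{z_1},b_{z_2}$ arising from $\F_{p^r}$-points (not just their $\sigma$-conjugacy classes) to the central element $(rm\,\nu_{G}([b_0]))(p)$ --- and note that this genuine equality, not mere centrality, is what Proposition \ref{unsj} later consumes, since it needs a single $\gamma\in\bZ^{\circ}(\Q)$ giving the $p^m$-power Frobenius at both points.
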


\begin{proof}
Since $[b_0]$ is basic, there are $s\in \Zpn$ and $m'\in s\Z$ such that $rs\nu_{G}([b_0])$ is a cocharacter of $Z$ defined over $\Qp$ and
\begin{equation*}
b_{z}\cdots \sigma^{rm'-1}(b_{z})=(rs\nu_{G}([b_0]))(p)=b_{z'}\cdots \sigma^{rm'-1}(b_{z'})\in Z(\Qp). 
\end{equation*}
Moreover, there is $t\in \Zpn$ satisfying $(rst\nu_{G}([b_0]))(p)\in Z^{\circ}(\Qp)$. Hence $m:=m't$ satisfies the desired assertion. 
\end{proof}

Let $\ell \neq p$ be a prime. As in \cite[(3.4.2)]{Kisin2010}, there is a finite family of tensors $(t_{\alpha,\ell,z})$ in $(T_{\ell}\cA_{K,z})^{\otimes}\cong \rH^1_{\et}(\cA_{K,z},\Ql)^{\otimes}$ such that the isomorphism 
\begin{equation*}
\eta_{\ell,z}\colon V_{2n,\Z_{(p)}}\otimes_{\Z_{(p)}}\Ql \xrightarrow{\cong} T_{\ell}\cA_{K,z}\otimes_{\Z}\Q
\end{equation*}
induced by the $K^p$-level structure attached to $z$ maps $t_{\alpha,\ell,z}$ to $s_{\alpha}\otimes 1$ for any $\alpha$. We define an algebraic group $I_{\ell,z}$ over $\Ql$ as the subgroup of $\GL(\rH_{\et}^{1}(\cA_{K,z},\Ql))$ which stabilizes $t_{\alpha,\ell,z}$ for all $\alpha$ and $\gamma_{\ell}^{m}$ for sufficiently large $m\in \Zpn$. Then the isomorphism $\eta_{\ell,z}$ induces an injection 
\begin{equation*}
c_{\ell,z}\colon I_{\ell,z}\hookrightarrow \bG\otimes_{\Q}\Ql. 
\end{equation*}

On the other hand, let $\Aut_{\Q}(\cA_{K,z})$ be the algebraic group over $\Q$ to be
\begin{equation*}
\Aut_{\Q}(\cA_{K,z})(R)=(\End(\cA_{K,z})\otimes_{\Z}R)^{\times}
\end{equation*}
for any $\Q$-algebra $R$. Moreover, we define an algebraic group $\bI_{z}$ over $\Q$ as the subgroup of $\Aut_{\Q}(\cA_{K,z})$ which fixes all $s_{\alpha,\ell,z}$ and $s_{\alpha,0,z}$. Then there are injections
\begin{equation*}
\iota_{\ell,z}\colon \bI_{z}\otimes_{\Q}\Ql \hookrightarrow I_{\ell,z},\quad \iota_{p,z}\colon \bI_{z}\otimes_{\Q}\Qp \hookrightarrow J_{b}. 
\end{equation*}

\begin{lem}\label{exmt}
\emph{
\begin{enumerate}
\item The $\R$-group $\bI_{z}\otimes_{\Q}\R$ is anisotropic modulo center. 
\item For a prime $\ell \neq p$, $\bI_{z}\otimes_{\Q}\Ql$ contains the connected component of $I_{\ell,z}$. In particular, the ranks of $\bG$ and $\bI_{z}$ are equal. 
\end{enumerate}}
\end{lem}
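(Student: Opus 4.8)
\emph{Part (i).} The plan is the classical Rosati-positivity argument. Put $B:=\End^{0}(\cA_{K,z})$, a finite-dimensional semisimple $\Q$-algebra; recall that $\bI_{z}$ is a $\Q$-subgroup of $\Aut_{\Q}(\cA_{K,z})=B^{\times}$, that the prime-to-$p$ polarization $\lambda_{K,z}$ induces a positive involution $\dagger$ on $B$, and that --- since the chosen tensors encode the polarization up to similitude --- every element of $\bI_{z}$ is a $\dagger$-similitude. First I would invoke the classification of $(B\otimes_{\Q}\R,\dagger)$ as a finite product of $(\oM_{d}(\R),\mathrm{t})$, $(\oM_{d}(\C),*)$ and $(\oM_{d}(\H),*)$ with their standard positive involutions, whose unitary groups $\mathrm{O}(d)$, $\mathrm{U}(d)$, $\Sp(d)$ are compact; this forces the $\dagger$-similitude group of $(B\otimes_{\Q}\R,\dagger)$, and hence its closed subgroup $\bI_{z}\otimes_{\Q}\R$, to be anisotropic modulo center. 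This argument (as in \cite[Chapter 6]{Rapoport1996b}) uses nothing about $[b_{0}]$ being basic.

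\emph{Part (ii), first assertion.} The essential input is Tate's isogeny theorem for abelian varieties over finite fields. Writing $T_{\ell}$ for the $\ell$-adic Tate module and $\gamma_{\ell}$ for geometric Frobenius, applying Tate's theorem over all finite subfields of $\Fpbar$ and passing to the (stabilizing) limit identifies $B\otimes_{\Q}\Ql$ with the $\Ql$-subalgebra of $\End_{\Ql}(T_{\ell}\cA_{K,z}\otimes_{\Z}\Q)$ of elements commuting with $\gamma_{\ell}^{m}$ for $m$ sufficiently divisible --- exactly the exponent condition used in the definition of $I_{\ell,z}$. Intersecting with the condition of fixing the tensors $t_{\alpha,\ell,z}$, which via $\eta_{\ell,z}$ means lying in $\bG\otimes_{\Q}\Ql$, then identifies $\bI_{z}\otimes_{\Q}\Ql$ with $I_{\ell,z}$ up to connected components, so in particular $\bI_{z}\otimes_{\Q}\Ql\supseteq I_{\ell,z}^{\circ}$. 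This step follows \cite[(2.3)]{Kisin2010} and \cite[(2.3)]{Kisin2017} essentially verbatim.

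\emph{Rank equality, and the main obstacle.} The composite $c_{\ell,z}\circ\iota_{\ell,z}\colon\bI_{z}\otimes_{\Q}\Ql\hookrightarrow\bG\otimes_{\Q}\Ql$ gives $\rk\bI_{z}\le\rk\bG$. For the reverse inequality, Frobenius acts semisimply on $T_{\ell}\cA_{K,z}\otimes\Q$, so $\gamma_{\ell}$ is a semisimple element of $\bG(\Ql)$ and lies in some maximal torus $T$ of $\bG\otimes_{\Q}\Ql$; since $T$ fixes the $t_{\alpha,\ell,z}$ and centralizes every power of $\gamma_{\ell}$ we get $T\subseteq I_{\ell,z}$, and combining with $I_{\ell,z}^{\circ}\subseteq\bI_{z}\otimes_{\Q}\Ql$ yields $\rk\bG\le\rk\bI_{z}$. (In the basic case Lemma \ref{pwct} gives more: a power of $\gamma_{\ell}$ is already central in $\bG\otimes_{\Q}\Ql$, so that $I_{\ell,z}=\bG\otimes_{\Q}\Ql$.) I expect the delicate point to be not this group theory but the normalizations underneath it: that $\gamma_{\ell}$, a priori only an element of $\GSp_{2n}(\Ql)$, in fact lies in $\bG(\Ql)$ --- which rests on the Galois-invariance of the $t_{\alpha,\ell,z}$ from \cite[(3.4.2)]{Kisin2010} --- and that the tensors genuinely pin down the polarization up to similitude, so that (i) applies. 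Once these are in place, both assertions are formal.
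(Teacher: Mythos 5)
Your overall strategy is the same as the paper's: the paper's proof of this lemma is literally a citation of \cite[Corollary (2.1.7)]{Kisin2017}, whose argument is Rosati positivity at the archimedean place and Tate's isogeny theorem at $\ell\neq p$. Your part (i), and your deduction of the rank equality from the first assertion of (ii) (a maximal torus of the tensor stabilizer containing the semisimple Frobenius lies in $I_{\ell,z}$, so $\rk \bG\le \rk I_{\ell,z}^{\circ}\le \rk \bI_{z}\le \rk\bG$), are correct and match the intended argument; the two ``delicate points'' you single out (Frobenius-invariance of the $t_{\alpha,\ell,z}$, and the tensors pinning down the polarization up to similitude) are indeed points to check but are routine in Kisin's setup.

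The genuine gap is in the first assertion of (ii), at exactly the step you declare formal. Tate's theorem identifies $\Aut_{\Q}(\cA_{K,z})\otimes_{\Q}\Ql$ with the centralizer of $\gamma_{\ell}^{m}$ in $\GL(\rH^1_{\et}(\cA_{K,z},\Ql))$, hence gives $I_{\ell,z}\subseteq \Aut_{\Q}(\cA_{K,z})\otimes_{\Q}\Ql$ and the easy inclusion $\bI_{z}\otimes_{\Q}\Ql\subseteq I_{\ell,z}$. But ``intersecting with the condition of fixing the $t_{\alpha,\ell,z}$'' only describes the subgroup of $\Aut_{\Q}(\cA_{K,z})\otimes_{\Q}\Ql$ cut out by the $\ell$-adic conditions; membership in $\bI_{z}\otimes_{\Q}\Ql$ also requires fixing the tensors at all the other places entering the definition of $\bI_{z}$, namely the $\ell'$-adic tensors for the other primes and the crystalline tensors $s_{\alpha,0,z}$. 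The claim that the identity component of the $\ell$-adically defined group automatically satisfies these conditions at the other places is precisely the non-formal content of Kisin's Corollary (2.1.7) --- it is why the statement is only about identity components --- and it does not follow from Tate's theorem plus passing to connected components; your proposal neither proves it nor correctly locates it as the difficulty. Relatedly, your pointer to \cite[(2.3)]{Kisin2017} is off: the relevant statement and proof are in \S 2.1 there (Corollary (2.1.7)), which is exactly what the paper cites, while \S 2.3 contains the later refinement used for Corollary \ref{lcif}. To repair the write-up you should either supply that cross-place argument (via the Frobenius torus, as in Kisin's proof) or, as the paper does, cite \cite[Corollary (2.1.7)]{Kisin2017} for this step rather than present it as a consequence of Tate's theorem alone.
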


\begin{proof}
The proof is the same as \cite[Corollary (2.1.7)]{Kisin2017}. 
\end{proof}

For $z\in \sS_{K}(\Fpbar)$, let $\theta_{K,(z,j)}\colon X_{\sigma(\mu)}(b_0)_{\Kb_p}\rightarrow \sS_{K}(\Fpbar)$ be the map induced by Axiom \ref{lift}. We define a subset $\sI_{z}$ of $\sS_{K}(\Fpbar)$ as the image of the map
\begin{equation*}
\theta_{K,(z,j)}\colon X_{\sigma(\mu)}(b_0)_{\Kb_p}\times \bG(\A_f^p)\rightarrow \sS_{K}(\Fpbar)
\end{equation*}
induced by $\theta_{z}$ and the prime-to-$p$ Hecke action on $\sS_{K_p}:=\varprojlim_{K^p}\sS_{K^pK_p}$. 

\begin{lem}\label{isps}
\emph{Let $\widetilde{z}\in \Sh_{K}(\bG,\bX)$, and take a representative $(h,g_{0,p},h_{0}^{p})\in \bX \times \bG(\Qp)\times \bG(\A_f^p)$ of $\widetilde{z}$. Assume that the reduction $z\in \sS_{K}(\Fpbar)$ of $\widetilde{z}$ lies in $\sS_{K,[b_0]}$. On the other hand, let $\widetilde{z}'\in \Sh_{K}(\bG,\bX)$ be the point attached to $(h,1)\in \bX\times \bG(\A_f)$, and write $z'$ for the reduction of $\widetilde{z}'$. Then $z'$ lies in $\sI_{z}$. }
\end{lem}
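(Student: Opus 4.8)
The plan is to realize $z'$ as a point of $\sI_z$ by first absorbing the prime-to-$p$ part of a comparison isogeny into the prime-to-$p$ Hecke action, and then realizing the remaining $p$-power modification through the uniformization map $\theta_{K,(z,j)}$. As a first step I would pass to the point $\widetilde z_1\in\Sh_{K}(\bG,\bX)$ attached to $(h,g_{0,p},1)$: it is the image of $\widetilde z$ under the prime-to-$p$ Hecke operator $h_0^p$ on $\sS_{K_p}$, hence reduces to $z_1:=h_0^p\cdot z\in\sS_{K,[b_0]}$, and $z_1\in\sI_z$. Since the $p$-adic modification underlying $\theta$ does not involve the prime-to-$p$ level, the formation of $\theta$ commutes with the prime-to-$p$ Hecke action on $\sS_{K_p}$, so $\sI_{z_1}=\sI_z$ and it suffices to prove $z'\in\sI_{z_1}$.

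Next I would construct the comparison isogeny over $\Fpbar$. The points $\widetilde z'$ and $\widetilde z_1$ have the same $\bX$-component $h$ and $\bG(\A_f)$-components agreeing away from $p$; consequently the abelian varieties $A_{\widetilde z'}$ and $A_{\widetilde z_1}$ associated with them via the Hodge embedding $\bi$ are linked by a $p$-power isogeny respecting the $\bG$-structure --- matching the global tensors $(s_\alpha)$, respecting the polarizations up to a $\Q^{\times}$-scalar, and inducing the identity on prime-to-$p$ level structures (here one may have to rescale the lattice at $z'$ by a power of $p$, which does not change $\widetilde z'$ because $\bG$ contains the scalars). As $A_{\widetilde z_1}$ has good reduction $\cA_{K,z_1}$, the abelian variety $A_{\widetilde z'}$ has good reduction too (N{\'e}ron--Ogg--Shafarevich together with the Siegel moduli description of $\sS_{K',2n}$ and normality of $\sS_K$), so $\widetilde z'$ reduces to a point $z'\in\sS_K(\Fpbar)$, and the isogeny specializes to a $p$-power isogeny $v\colon\cA_{K,z_1}\to\cA_{K,z'}$ respecting polarizations up to a $\Q^{\times}$-scalar, carrying $(t_{\alpha,0,z_1})$ to $(t_{\alpha,0,z'})$, and inducing the identity on prime-to-$p$ level structures. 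In particular $[b_{z'}]=[b_{z_1}]=[b_0]$, so $z'\in\sS_{K,[b_0]}$.

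Finally I would feed $z'$ into $\theta_{K,(z_1,j_1)}$. Fix a quasi-isogeny $j_1\colon\X_{b_0}\to\cA_{K,z_1}[p^\infty]$ compatible with the polarizations up to a $\Qpt$-scalar and carrying $t_{b_0,\alpha}$ to $t_{\alpha,0,z_1}$ for all $\alpha$; such a $j_1$ exists because $z_1\in\sS_{K,[b_0]}$. Put $\rho:=v[p^\infty]\circ j_1\colon\X_{b_0}\to\cA_{K,z'}[p^\infty]$. Since $v$ is a genuine $p$-power isogeny, $\rho\circ j_1^{-1}=v[p^\infty]$ is an honest isogeny of $p$-divisible groups with kernel $C:=\Ker(v)\subset\cA_{K,z_1}[p^\infty]$, and the triple $(\cA_{K,z'}[p^\infty],\lambda_{z'}[p^\infty],\rho)$ satisfies the hypotheses of Proposition \ref{adrz}: the polarization condition is inherited from $j_1$ and $v$, and $\rho_*(t_{b_0,\alpha})=(v[p^\infty])_*(t_{\alpha,0,z_1})=t_{\alpha,0,z'}$ lies in $\D(\cA_{K,z'}[p^\infty])(\Zpb)^{\otimes}$ precisely because $z'\in\sS_K$. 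Hence this triple defines a point $x'\in X_{\sigma(\mu)}(b_0)_{\Kb_p}(\Fpbar)$, and by the definition of the uniformization map $\theta_{K,(z_1,j_1)}(x')$ is the point of $\sS_K$ carried by $\cA_{K,z_1}/C$ with its induced polarization and $K^p$-level structure. As $C$ is $p$-primary, this induced $K^p$-level structure lives on $T_\ell(\cA_{K,z_1}/C)=T_\ell\cA_{K,z_1}$, which by the previous step is identified with $T_\ell\cA_{K,z'}$ compatibly with level structures, and the induced polarization agrees with $\lambda_{z'}$ up to a prime-to-$p$ scalar; therefore $\theta_{K,(z_1,j_1)}(x')=z'$, and so $z'\in\sI_{z_1}=\sI_z$.

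The step I expect to be the main obstacle is the $p$-adic tensor bookkeeping in the last two paragraphs: one must know that the crystalline tensors $(t_{\alpha,0,z})$ are the crystalline/de Rham realizations of the global tensors $(s_\alpha)$ in a way compatible both with isogenies and with specialization, so that $v$ really carries $(t_{\alpha,0,z_1})$ to $(t_{\alpha,0,z'})$ and so that the resulting $\rho$ preserves the \emph{integral} tensor structure --- placing $x'$ inside the affine Deligne--Lusztig set $X_{\sigma(\mu)}(b_0)_{\Kb_p}(\Fpbar)$ rather than only its rational analogue. This is exactly the functoriality built into the Kisin--Pappas construction of $(s_{\alpha,\dR})$ and $(t_{\alpha,0,z})$ recalled in Sections \ref{dfpd} and \ref{ntst}, and once it is invoked the argument closes up.
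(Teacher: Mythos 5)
Your overall strategy (absorb the prime-to-$p$ component into the Hecke action, then realize the change of $p$-component as a point of $X_{\sigma(\mu)}(b_0)_{\Kb_p}$ and feed it into the uniformization map) is the same idea that underlies the paper's one-line proof, but there is a genuine gap at the decisive last step. You conclude $\theta_{K,(z_1,j_1)}(x')=z'$ ``by the definition of the uniformization map'', treating $\theta_{K,(z_1,j_1)}$ as if it had a moduli description with values in $\sS_K(\Fpbar)$. It does not: by Axiom \ref{lift} the map $\theta'_{K,(z_1,j_1)}$ takes values in $\sS_{K',2n}(\Fpbar)$, and its lift $\theta_{K,(z_1,j_1)}$ to $\sS_K(\Fpbar)$ is only the abstract factorization asserted by the axiom. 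Since $\sS_K$ is a \emph{normalization} of the closure of $\Sh_K(\bG,\bX)$ in the Siegel model, the map $\sS_K(\Fpbar)\rightarrow\sS_{K',2n}(\Fpbar)$ need not be injective (two points of the normalization can lie over the same Siegel point, each with completed local ring mapping by a closed immersion, as for the two branches over a node); so showing, as you do, that $\theta_{K,(z_1,j_1)}(x')$ and $z'$ have the same image --- same quotient abelian variety, polarization up to prime-to-$p$ scalar, and $K^p$-level structure --- in $\sS_{K',2n}(\Fpbar)$ does not yet identify them as points of $\sS_K(\Fpbar)$.

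To close this you need to know \emph{how} the factorization in Axiom \ref{lift} is produced in the basic case: in the proof of Proposition \ref{lfok} (ii) (the proof of Zhou's Proposition 7.8 together with Remark 5.4 of Hamacher--Kim), the value of $\theta_{K,(z,j)}$ at the point of $X_{\sigma(\mu)}(b_0)_{\Kb_p}$ attached to $g_p\in G(\Qp)$ is constructed as the \emph{reduction of the characteristic-zero point} of $\Sh_K(\bG,\bX)$ obtained by modifying the $p$-component of the level by $g_p$; this is exactly the commutative diagram the paper invokes, and it gives the lemma at once (with the $\bG(\A_f^p)$-factor taking care of your first step). Once you appeal to that construction, most of your by-hand work --- the characteristic-zero $p$-power isogeny, the N{\'e}ron--Ogg--Shafarevich/normality argument for good reduction, and the crystalline-tensor bookkeeping you flagged as the main obstacle --- is subsumed; conversely, without it, your argument proves equality only in $\sS_{K',2n}(\Fpbar)$. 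The remaining ingredients of your write-up (the reduction $\sI_{z_1}=\sI_z$ for a prime-to-$p$ Hecke translate, and the verification that your triple satisfies the conditions of Proposition \ref{adrz}) are essentially fine, though the first is asserted rather than argued and also ultimately rests on the same description of $\theta_{K,(z,j)}$.
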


\begin{proof}
By the proof of Proposition \ref{lfok}, that is, the proof of \cite[Proposition 7.8]{Zhou2020} and \cite[Remark 5.4]{Hamacher2019}, there is a map $G(\Qp)\rightarrow X_{\sigma(\mu)}(b)_{\Kb_p}(\Fpbar)$ such that the following diagram commutes:  
\begin{equation*}
\xymatrix@C=46pt{
\{h\} \times G(\Qp)\times \bG(\A_f^p) \ar[r] \ar[d]^{r_h\times [g\mapsto g_0^{-1}g]}& \Sh_{K}(\bG,\bX)(\C) \ar[d] \\
X_{\sigma(\mu)}(b)_{\Kb_p}(\Fpbar)\times \bG(\A_f^p) \ar[r]^{\quad\quad \theta_{K,(z,j)}}& \sS_{K}(\bG,\bX)(\Fpbar). }
\end{equation*}
Hence $z'$ lies in the image of $\theta_{K,(z,j)}$, which is equivalent to $z'\in \sI_{z}$ by definition. 
\end{proof}

The following is a generalization of \cite[Theorem 9.4]{Zhou2020} in the basic case. 

\begin{thm}\label{splf}
\emph{Let $\bT$ be a maximal torus of $\bI_{z}$ which is elliptic over $\Qp$, and $\mu_0 \in X_{*}(\bT_{\Qp})$ satisfying $\mu_0\in \{\mu\}$ and $\overline{\mu}=\nu_{b_0}^{-1}$. Then there is $z'\in \sI_{z}$ and a special point $\widetilde{z}'\in \Sh_{K}(\bG,\bX)(L)$ for a subfield $L$ of $\Qpbar$ which lifts to $z'$ such that the filtration on $\D(X_{K,z})_{\Q}\cong \D(X_{K,z'})_{\Q}$ corresponding to $\widetilde{z}'$ is determined by $\mu_0$. In particular, $\bT \hookrightarrow \Aut_{\Q}\cA_{K,z'}$ lifts to $\bT \hookrightarrow \Aut_{\Q}\cA_{K,\widetilde{z}'}$, and the Mumford--Tate group of $\cA_{K,\widetilde{z}'}$ is contained in $\bT$. }
\end{thm}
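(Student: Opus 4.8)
The plan is to follow the special-point construction in an isogeny class due to Kisin \cite{Kisin2017} (see also Zhou \cite{Zhou2020}, \S 9), replacing the use of smooth integral models, which is available only in the hyperspecial case, by the local model diagram of Proposition \ref{lmds}, the deformation-theoretic description of formal neighborhoods in Proposition \ref{dfp1}, and the local model $M^{\loc}(G,\mu,\bx)$ of Section \ref{lcmd}. Rather than deforming $z$ directly, I would produce a special point of $(\bG,\bX)$ out of the abstract CM datum $(\bT,\mu_0)$, take its reduction, and then identify that reduction with a point of $\sI_z$ via the isogeny-class description.

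First I would transfer the torus. Since $\bI_z$ is an inner form of $\bG$ (Lemma \ref{exmt}) and $\bT\subset\bI_z$ is a maximal torus, one can choose a maximal $\Q$-torus $\bT'\subset\bG$ together with an isomorphism $j_T\colon\bT\xrightarrow{\cong}\bT'$ compatible with the inner twisting (conjugate the twisting cocycle into $\bT^{\ad}$); this yields isomorphisms $\bT\otimes_\Q\Ql\cong\bT'\otimes_\Q\Ql$ for every place $\ell$ of $\Q$, including $\ell=p$ and $\ell=\infty$. By Lemma \ref{exmt}(i) the torus $\bT'_\R\cong\bT_\R$ is anisotropic modulo the centre of $\bG$, so $\bX$ contains points factoring through $\bT'_\R$. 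Transporting $\mu_0$ through $j_T$ and the fixed isomorphism $\Qpbar\cong\C$, and using $\mu_0\in\{\mu\}$, one then arranges $h\in\bX$ factoring through $\bT'_\R$ whose Hodge cocharacter satisfies $j_T(\mu_h^{-1})=\mu_0$, possibly after replacing $h$ by its complex conjugate and adjusting $j_T$ within $W(\bG,\bT')$.

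Next I would choose $g\in\bG(\A_f)$ with $g_p$ adapted to $K_p$, set $\widetilde{z}':=[h,g]\in\Sh_K(\bG,\bX)$ — a special point defined over a number field $L$, which we regard inside $\Qpbar$ via $\Qpbar\cong\C$ — and consider $\cA_{K,\widetilde{z}'}$, which has complex multiplication by $\bT'$. By the theory of complex multiplication together with Serre--Tate theory it has good reduction along the place of $L$ picked out by $\Qpbar\cong\C$; write $z'\in\sS_K(\Fpbar)$ for the reduction, so that $\cA_{K,z'}$ inherits the $\bT'$-action. The Dieudonn\'e module of $\cA_{K,z'}[p^\infty]$, with its Frobenius and the tensors $(t_{\alpha,0,z'})$, is the one attached by the Shimura--Taniyama formula to $\mu_0\in X_{*}(\bT'_{\Qp})$; since $\overline{\mu}_0=\nu_{b_0}^{-1}$, Lemma \ref{adsp} applied to the elliptic maximal torus $\bT'_{\Qp}\subset G$ shows the associated $\sigma$-conjugacy class pushes forward to $[b_0]\in B(G,\{\mu\})$, hence $z'\in\sSbar_{K,[b_0]}$. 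Consequently there is a quasi-isogeny $\X_{b_0}\dashrightarrow\cA_{K,z'}[p^\infty]$ compatible with $(t_{b,\alpha})$, $(t_{\alpha,0,z'})$ and with the polarizations up to a $\Qpt$-scalar, which together with a suitable prime-to-$p$ level structure exhibits $z'$ as lying in the image of $\theta_{K,(z,j)}$ combined with the prime-to-$p$ Hecke action, i.e. $z'\in\sI_z$. The same quasi-isogeny induces a $\bT$-equivariant isomorphism $\D(X_{K,z'})_\Q\cong\D(X_{K,z})_\Q$ matching the tensors, and by Grothendieck--Messing theory the Hodge filtration of $\cA_{K,z'}[p^\infty]$ is the reduction of the filtration of $\D(X_{K,z})_\Q$ determined by $\mu_0$, which is precisely the one corresponding to $\widetilde{z}'$; this gives the remaining assertions, the lift of $\bT\hookrightarrow\Aut_\Q\cA_{K,z'}$ to $\bT\hookrightarrow\Aut_\Q\cA_{K,\widetilde{z}'}$ coming from the CM structure, and the Mumford--Tate group of $\cA_{K,\widetilde{z}'}$ being contained in $\bT'\cong\bT$ since $h$ factors through $\bT'_\R$.

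The hard part is the simultaneous matching of local data in the two preceding paragraphs: one must produce a single maximal $\Q$-torus $\bT'\subset\bG$, an identification $j_T$ with $\bT$, a point $h\in\bX$ and an adelic element $g$ such that $h$ factors through $\bT'_\R$, such that $j_T(\mu_h^{-1})=\mu_0$ exactly (so that the reduction carries the prescribed $\sigma$-conjugacy class and filtration), such that $g_p$ yields the correct parahoric integral structure on the reduced $p$-divisible group (so that the deformation-theoretic identifications of Section \ref{dfpd} and the local model $M^{\loc}(G,\mu,\bx)$ apply), and such that the prime-to-$p$ level structures are compatible, so that $z'$ lies not merely in the abstract isogeny class of $z$ but in $\sI_z$. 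In the hyperspecial setting these matchings are carried out in \cite[\S 2]{Kisin2017} and \cite[\S 9]{Zhou2020}; here the non-hyperspecial parahoric level forces one to substitute the local model diagram (Proposition \ref{lmds}) for the smooth integral models used there, and to verify that the CM deformation direction determined by $\mu_0$ lands in $M^{\loc}(G,\mu,\bx)$, which is exactly where the hypothesis $\mu_0\in\{\mu\}$ enters.
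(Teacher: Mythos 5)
Your overall direction is reversed relative to the paper, and the reversal creates a genuine gap. The paper's proof stays inside the given isogeny class: it uses \cite[\S 10]{Kottwitz1986} to transfer $\bT_{\Qp}$ to a maximal torus of $G$ compatibly with $\iota_{p,z}$ and $J_{b_0,\Qpbar}\cong G_{\Qpbar}$, invokes Lemma \ref{adsp} (this is exactly where basicness of $[b_0]$ enters) to arrange $b_0\in T(\Qpb)$, and then runs the deformation argument of \cite[Theorem 9.4]{Zhou2020}: one produces a $\bT$-stable point $z'$ of $\sI_z$ (a $T$-fixed point of the affine Deligne--Lusztig set), lifts the filtration prescribed by $\mu_0$ through Grothendieck--Messing and the identification of the formal neighborhood with the local model (Proposition \ref{dfp1}), and the lift is automatically a special point carrying the $\bT$-action. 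In particular $z'\in\sI_z$ holds by construction, before any characteristic-zero point is produced.

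Your proposal instead builds a special point $\widetilde{z}'$ abstractly from a global transfer $\bT'\subset\bG$ of $\bT$ and then asserts that its reduction $z'$ lies in $\sI_z$ because $z'\in\sSbar_{K,[b_0]}$ and there is ``a quasi-isogeny $\X_{b_0}\dashrightarrow\cA_{K,z'}[p^\infty]$ compatible with the tensors and a suitable prime-to-$p$ level structure.'' This is the step that fails: membership in $\sI_z$ requires a tensor-preserving quasi-isogeny relating $\cA_{K,z'}$ to the \emph{given} point $\cA_{K,z}$ (through the fixed $j\colon\X_{b_0}\rightarrow\cA_{K,z}[p^\infty]$ and the prime-to-$p$ Hecke action), not merely lying in the basic Newton stratum with some abstract quasi-isogeny to $\X_{b_0}$. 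The statement that any two points of $\sSbar_{K,[b_0]}$ have the same isogeny class is precisely Proposition \ref{unsj}, which in this paper is \emph{deduced from} Theorem \ref{splf}; so your route is circular, and you acknowledge but do not close this gap (``the hard part is \dots such that $z'$ lies not merely in the abstract isogeny class of $z$ but in $\sI_z$''). There is also no argument supplied for the global transfer of $\bT$ to a $\Q$-torus of $\bG$ matching the inner twisting at every place together with a compatible $h\in\bX$ and adelic datum; in the paper this global matching problem is avoided entirely, since only the local transfer at $p$ is needed and the special lift is obtained by deforming a point already known to lie in $\sI_z$.
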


\begin{rem}
The existence of $\bT$ and $\mu_0$ in Proposition \ref{splf} are consequences of Lemma \ref{exmt} (ii) and Lemma \ref{adsp} respectively. 
\end{rem}

\begin{proof}
By \cite[\S 10]{Kottwitz1986}, there is a closed immersion $\bT_{\Qp}\hookrightarrow G$ over $\Qp$ which is $G(\Qpbar)$-conjugate to 
\begin{equation*}
\bT_{\Qpbar}\subset \bI_{z,\Qpbar}\xrightarrow{\iota_{p,z}} J_{b_0,\Qpbar}\cong G_{\Qpbar}. 
\end{equation*}
Hence we may assume $b_0\in T(\Qpb)$ by Lemma \ref{adsp}. Then the assertion follows from the same argument as the proof of \cite[Theorem 9.4]{Zhou2020}. 
\end{proof}

\begin{cor}\label{lcif}
\emph{
\begin{enumerate}
\item For a prime $\ell \neq p$, then the morphisms 
\begin{equation*}
\bI_{z}\otimes_{\Q}\Ql \xrightarrow{\iota_{\ell,z}} I_{\ell,z}\xrightarrow{c_{\ell,z}} \bG\otimes_{\Q}\Ql
\end{equation*}
are isomorphisms. 
\item The morphism $\bI_{z}\otimes_{\Q}\Qp \xrightarrow{\iota_{p,z}} J_{b_0}$ is an isomorphism. 
\end{enumerate}}
\end{cor}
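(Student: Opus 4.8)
The plan is to deduce Corollary \ref{lcif} from the special point lifting result of Theorem \ref{splf} together with the Langlands--Rapoport--style argument from \cite{Kisin2017}. First I would recall the setup: by construction $\bI_z$ sits inside $\Aut_\Q(\cA_{K,z})$ cutting out the stabilizer of all the tensors $s_{\alpha,\ell,z}$ and $s_{\alpha,0,z}$, and we have the chain of injections $\bI_z\otimes_\Q\Ql \hookrightarrow I_{\ell,z}\hookrightarrow \bG\otimes_\Q\Ql$ for $\ell\neq p$ and $\bI_z\otimes_\Q\Qp \hookrightarrow J_{b_0}$. By Lemma \ref{exmt}, $\bI_z$ already has the same rank as $\bG$, and $\bI_z\otimes_\Q\R$ is anisotropic modulo center; so each of these injections is an isomorphism on identity components and the only remaining issue is surjectivity on component groups / that the injection of reductive groups of equal rank is in fact everything. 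The standard way to upgrade this is a global argument comparing dimensions of automorphism groups in an isogeny class, exactly as in \cite[Corollary (2.1.7), Proposition (2.3.1)]{Kisin2017}.

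The key step is to produce enough special points in the isogeny class $\sI_z$ to force the inclusions to be equalities. This is precisely what Theorem \ref{splf} provides: for \emph{any} maximal torus $\bT$ of $\bI_z$ that is elliptic over $\Qp$, there is a point $z'\in\sI_z$ lifting to a special point $\widetilde{z}'\in\Sh_K(\bG,\bX)(L)$ whose Mumford--Tate group is contained in $\bT$. Since $z'$ lies in the same isogeny class as $z$ (both lie in $\sI_z$, and points of $\sI_z$ have isogenous abelian varieties by construction of $\theta_{K,(z,j)}$ via quotients by finite subgroups), we have $\bI_{z'}\cong\bI_z$ — more precisely the quasi-isogeny identifies the two, compatibly with all the tensors. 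Now at the special point $\widetilde{z}'$, the abelian variety $\cA_{K,\widetilde{z}'}$ has CM by (a torus containing) $\bT$, and its Mumford--Tate group $\bT'\subseteq\bT$ is a torus of the same rank as $\bG$; the Shimura--Taniyama theory computes $I_{\ell,z'}$ and $J_{b_0}$ in terms of $\bT'$, and one checks they coincide with $\bT'\otimes\Ql$ resp.\ $\bT'\otimes\Qp$. Since $\bT\subseteq\bI_{z'}$, this pins down $\bI_{z'}$ between $\bT'$ and $\bG$ locally at every place, and running over all elliptic-at-$p$ maximal tori $\bT$ (which exist and are Zariski dense) forces $\iota_{\ell,z}$, $c_{\ell,z}$, and $\iota_{p,z}$ to be isomorphisms. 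Concretely one argues: $\iota_{p,z}\colon\bI_z\otimes\Qp\hookrightarrow J_{b_0}$ is an inclusion of reductive groups of equal rank, and it contains a maximal torus of $J_{b_0}$ that is \emph{elliptic}, and by varying $\bT$ one sees it contains all of $J_{b_0}$; similarly for $\ell\neq p$ using that $I_{\ell,z}$ has connected fibers contained in $\bI_z\otimes\Ql$ by Lemma \ref{exmt} (ii) and that $c_{\ell,z}$ has image a subgroup of $\bG\otimes\Ql$ containing maximal tori of all types.

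Concretely the steps, in order: (1) fix an elliptic-over-$\Qp$ maximal torus $\bT\subseteq\bI_z$ (exists by Lemma \ref{exmt}(ii) combined with the structure of $J_{b_0}$); (2) apply Theorem \ref{splf} to get $z'\in\sI_z$ and the special point $\widetilde{z}'$ with $\mathrm{MT}(\cA_{K,\widetilde{z}'})\subseteq\bT$; (3) observe $\bI_{z'}\cong\bI_z$ via the isogeny, compatibly with tensors, so it suffices to prove the corollary for $z'$; (4) at $\widetilde{z}'$, use the theory of CM abelian varieties (as in \cite[\S3]{Kisin2017}) to identify $I_{\ell,z'}$ with the centralizer of $\mathrm{MT}(\cA_{K,\widetilde{z}'})$ in $\bG\otimes\Ql$ and $J_{b_0}$ with the corresponding group at $p$, deducing that the composites in (i) and the map in (ii) are surjective after restricting to these tori; (5) since the identity components already agree by Lemma \ref{exmt} and the maps are injective, conclude they are isomorphisms — using that a subgroup of a connected reductive group containing a maximal torus and having the same dimension must be the whole group, and tracking component groups via the special point where everything is visibly matched. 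The main obstacle will be step (4)–(5): controlling the component groups, i.e.\ ruling out that $\bI_z\otimes\Ql$ or $\bI_z\otimes\Qp$ is a proper subgroup meeting every component — this is where one genuinely needs the special point to witness an equality of groups (not just of identity components), and where the argument must follow \cite{Kisin2017} carefully, since, as noted in the introduction, the transitivity of the $\bG(\A_f^p)$-action on $\pi_0$ may fail and so the usual shortcut is unavailable.
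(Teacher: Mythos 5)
Your proposal diverges from the paper's proof, and it has a genuine gap. The paper proves Corollary \ref{lcif} by the argument of \cite[Corollary (2.3.2)]{Kisin2017}, whose two inputs are (a) Tate's theorem on endomorphisms of abelian varieties over finite fields (and its crystalline analogue at $p$), which shows that every tensor-preserving endomorphism of $H^1_{\et}(\cA_{K,z},\Ql)$ (resp.\ of the isocrystal) commuting with a power of Frobenius comes from an actual quasi-endomorphism of $\cA_{K,z}$, so that $\iota_{\ell,z}$ and $\iota_{p,z}$ are surjective; and (b) Lemma \ref{pwct} with $z_1=z_2=z$, which says that in the basic case a power of Frobenius is \emph{central}, so that $I_{\ell,z}$ (the centralizer of $\gamma_{\ell}^m$ inside the tensor stabilizer) is all of $\bG\otimes_{\Q}\Ql$, making $c_{\ell,z}$ surjective. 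Your argument invokes neither Tate's theorem nor Lemma \ref{pwct}, and these are exactly what produce the surjectivity (equivalently, the equality of dimensions) that you need.

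Concretely, the gap is in your steps (4)--(5). Lemma \ref{exmt}(ii) gives only equality of \emph{ranks}, and producing, via Theorem \ref{splf}, maximal tori of $\bI_z$ that are elliptic at $p$ cannot upgrade this to equality of groups: a proper reductive subgroup of equal rank (for instance an elliptic maximal torus itself, or the centralizer of a semisimple element) can perfectly well contain elliptic maximal tori, so ``contains a maximal torus and is injective'' does not force surjectivity unless you already know $\dim \bI_z=\dim\bG$ --- which is the statement being proved. Moreover, the CM-theoretic identifications you assert are incorrect: since $b_0$ is basic, $J_{b_0}$ is an inner form of $G$ (Proposition \ref{bseq}), not a torus, and $I_{\ell,z'}$ is the centralizer of a Frobenius power inside the tensor stabilizer, which at a basic point is the whole of $\bG\otimes_{\Q}\Ql$ rather than $\bT'\otimes\Ql$ or the centralizer of the Mumford--Tate group; Shimura--Taniyama places the Frobenius \emph{inside} the CM torus but says nothing that bounds these centralizers from above by the torus. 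To repair the proof you should drop the special-point route (Theorem \ref{splf} is needed later, for Proposition \ref{unsj}, not here) and argue as the paper does: apply Tate's theorem at $\ell$ and at $p$ to identify $\bI_z\otimes\Ql$ with $I_{\ell,z}$ and $\bI_z\otimes\Qp$ with $J_{b_0}$, and use Lemma \ref{pwct} to see that the relevant Frobenius power is central so that $c_{\ell,z}$ is also an isomorphism.
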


\begin{proof}
The proof is the same as that of \cite[Corollary 2.3.2]{Kisin2017} by using Lemma \ref{pwct} for $z_1=z_2=z$. 
\end{proof}

Under the notations in Theorem \ref{splf}, we obtain the following: 
\begin{itemize}
\item the homomorphism $h_0\colon \bS \rightarrow \bT_{\R}$ corresponding to $\mu_0 \colon \G_{m,\C}\rightarrow \bT_{\C}$,
\item a $\bG(\Q)$-conjugacy class $\bc_{\widetilde{z}'}$ of a closed immersion $\bT \hookrightarrow \bG$.
\end{itemize}
Moreover, choosing a morphism in $\bc_{\widetilde{z}'}$ corresponds to choosing a representative of $\widetilde{z}'$ in $\bX \times \bG(\A_f^p)$. 

\begin{cor}\label{indq}
\emph{
There is an inner twisting $\psi \colon \bI_{z}\otimes_{\Q}\Qbar \xrightarrow{\cong} \bG\otimes_{\Q}\Qbar$ such that the following diagram commutes up to inner automorphism for any prime $\ell$: 
\begin{equation*}
\xymatrix@C=46pt{
\bI_{z}\otimes_{\Ql}\Qlbar \ar[r]^{\psi} \ar[d]& \bG \otimes_{\Q}\Qlbar \ar@{=}[d] \\
I_{\ell,z}\otimes_{\Q}\Qlbar \ar[r]^{\cong}& \bG \otimes_{\Q}\Qlbar. }
\end{equation*}
Here $I_{p,z}:=J_{b}$, and the lower horizontal isomorphism is induced by that in Corollary \ref{lcif}. Moreover, for a maximal torus $\bT$ of $\bI_{z}$, we can take $\psi$ as that $\bT_{\Qbar} \subset \bI_{z,\Qbar}\cong \bG_{\Qbar}$ is $\bG(\Qbar)$-conjugate to some (or any) element of $\bc_{\widetilde{z}'}$. }
\end{cor}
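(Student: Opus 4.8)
The plan is to construct the inner twisting $\psi$ by gluing together the local comparison isomorphisms from Corollary \ref{lcif}, together with the information at the archimedean place coming from the special point $\widetilde{z}'$ produced in Theorem \ref{splf}. First I would fix the special point $\widetilde{z}'\in \Sh_{K}(\bG,\bX)(L)$ and the maximal torus $\bT\subset \bI_z$ as in Theorem \ref{splf}, together with its Mumford--Tate torus embedding $\bT\hookrightarrow \bG$ in the conjugacy class $\bc_{\widetilde{z}'}$; this gives a torus common to both $\bG$ and $\bI_z$ over $\Q$. Choosing one element $j_0\in\bc_{\widetilde{z}'}$ we get, over $\Qbar$, two embeddings of $\bT_{\Qbar}$, namely into $\bG_{\Qbar}$ via $j_0$ and into $\bI_{z,\Qbar}$ tautologically; since $\bG$ and $\bI_z$ are both connected reductive containing the maximal torus $\bT$, and since, by Corollary \ref{lcif} and Lemma \ref{exmt}(ii), $\bG$ and $\bI_z$ have the same rank and are forms of each other over every completion $\Q_v$, there is an isomorphism $\bG_{\Qbar}\cong\bI_{z,\Qbar}$ carrying $j_0(\bT)$ onto $\bT$; I would take this to be (the inverse of) $\psi$, after adjusting by an element of $\bG(\Qbar)$ so that the torus is identified correctly.

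The second step is to check that $\psi$ is an \emph{inner} twisting, i.e. that for $\tau\in\gal(\Qbar/\Q)$ the automorphism $\psi\circ\tau(\psi)^{-1}$ of $\bG_{\Qbar}$ is inner. For this I would argue place by place. At a finite place $\ell\neq p$, Corollary \ref{lcif}(i) gives an isomorphism $\bI_z\otimes_{\Q}\Ql\xrightarrow{\sim}\bG\otimes_{\Q}\Ql$ defined over $\Ql$; comparing it with $\psi$ over $\Qlbar$ shows the two differ by an automorphism of $\bG_{\Qlbar}$ that is $\gal(\Qlbar/\Ql)$-equivariant up to the coboundary measuring failure of $\psi$ to be defined over $\Ql$, and since the $\Ql$-isomorphism is visibly a torsor trivialization this forces $\psi$ to be an inner twisting locally at $\ell$. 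At $p$ the same argument applies using Corollary \ref{lcif}(ii) and the fact that $J_{b_0}$ is an inner form of $G$ (Proposition \ref{bseq}(iii)), so the comparison at $p$ is through an inner twist. At the archimedean place, anisotropy modulo center of $\bI_z\otimes_{\Q}\R$ (Lemma \ref{exmt}(i)) together with the fact that $h_0$ lands in a torus shows $\bI_z\otimes_{\Q}\R$ is the compact-modulo-center inner form of $\bG\otimes_{\Q}\R$. Because the classes of inner twistings are controlled by $H^1(\Q,\bG^{\ad})$, which injects into $\prod_v H^1(\Q_v,\bG^{\ad})$ modulo the usual Hasse-principle considerations, local innerness at all $v$ upgrades to global innerness of $\psi$; alternatively one invokes directly that $\bI_z$ was \emph{defined} as an automorphism group of an abelian variety with tensors, so the standard Kottwitz--Kisin comparison (as in \cite[Corollary (2.1.7)]{Kisin2017} and its proof) exhibits it as an inner form of $\bG$.

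The third step records the compatibility of the local pictures: by construction of $\iota_{\ell,z}$ and $c_{\ell,z}$, and of $\iota_{p,z}$, the isomorphism $\psi$ restricted to $\Qlbar$ differs from the composite $\bI_z\otimes\Qlbar\to I_{\ell,z}\otimes\Qlbar\xrightarrow{\sim}\bG\otimes\Qlbar$ only by an inner automorphism of $\bG_{\Qlbar}$; this is exactly the assertion that the displayed square commutes up to inner automorphism for every $\ell$, including $\ell=p$ with $I_{p,z}:=J_{b_0}$. The final clause — that one may arrange $\bT_{\Qbar}\subset\bI_{z,\Qbar}\cong\bG_{\Qbar}$ to be $\bG(\Qbar)$-conjugate to a chosen element of $\bc_{\widetilde{z}'}$ — is automatic from the way $\psi$ was built in the first step, since we constructed it precisely so as to carry the tautological $\bT\subset\bI_z$ to $j_0(\bT)\subset\bG$; replacing $j_0$ by another element of $\bc_{\widetilde{z}'}$ changes the identification by an element of $\bG(\Q)$, hence of $\bG(\Qbar)$.

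The main obstacle I expect is the globalization in the second step: deducing that $\psi$ is an inner twisting over $\Q$ rather than merely over each $\Q_v$, since $H^1(\Q,\bG^{\ad})\to\prod_v H^1(\Q_v,\bG^{\ad})$ need not be injective in general. The clean way around this is not to glue at all but to note that $\bI_z$ was defined as $\Aut_{\Q}(\cA_{K,z})$-with-tensors, so one has an a priori $\Q$-group; the Tate/Faltings isogeny theorems and the $p$-adic comparison then identify $\bI_{z,\Ql}$ and $\bI_{z,\Qp}$ with the stabilizer groups $I_{\ell,z}$, $J_{b_0}$, and a standard descent argument (exactly as in the proof of \cite[Corollary (2.1.7)]{Kisin2017}, to which I would defer for the routine cohomological bookkeeping) shows these local identifications are the localizations of a single inner twisting $\psi\colon\bI_{z,\Qbar}\xrightarrow{\sim}\bG_{\Qbar}$. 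Thus the real content is already packaged in Lemma \ref{exmt}, Theorem \ref{splf} and Corollary \ref{lcif}, and the proof of the corollary is a matter of assembling them as in \emph{loc.\ cit.}
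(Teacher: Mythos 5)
Your proposal follows essentially the same route as the paper: fix the elliptic torus $\bT$ and the special point $\widetilde{z}'$ from Theorem \ref{splf} (normalizing the representative in $\bX\times\bG(\A_f)$, i.e.\ the choice of element of $\bc_{\widetilde{z}'}$), use the local identifications of Corollary \ref{lcif} and Lemma \ref{exmt}, and delegate the descent/inner-twisting bookkeeping to Kisin's argument — which is exactly what the paper does, citing \cite[Corollary (2.3.5)]{Kisin2017}. The only slip is the reference: the relevant statement of Kisin for this assembly is Corollary (2.3.5), not (2.1.7) (the latter corresponds to Lemma \ref{exmt}); your own worry about globalizing place-by-place innerness is resolved precisely by that argument, as in the paper.
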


\begin{proof}
Take $\bT$ and $\widetilde{z}'$ as in Proposition \ref{splf}. We may assume that $\widetilde{z}'$ admits a representative of the form $(h_0,g_p,1)\in \bX\times \bG(\Qp)\times \bG(\A_f^p)$. Then the assertion follows from the same argument as \cite[Corollary (2.3.5)]{Kisin2017}. 
\end{proof}

We construct a morphism in Theorem \ref{paut}. As in the proof of \cite[Theorem 3.3.2]{Howard2017}, the map $\Theta_{K,(z,j)} \colon \RZ_{K_p}(G,\mu,b_0) \rightarrow \sS_{K}$ and prime-to-$p$ Hecke action on $\sS_{K_p}$ induce a morphism of functors
\begin{equation*}
\bI_{z}(\Q)\backslash \RZ_{K_p}(G,\mu,b_0) \times \bG(\A_{f}^{p})/K^p\rightarrow (\sShat_{K,O_{\Eb}})_{/\sSbar_{K,[b_0]}}. 
\end{equation*}
We also denote it by ${\Theta}_{K,(z,j)}$. Note that this functor induces the map ${\theta}_{K,(z,j)}$ on the $\Fpbar$-valued points. 

\begin{prop}
\emph{
\begin{enumerate}
\item The functor $\bI_{z}(\Q)\backslash \RZ_{K_p}(G,\mu,b_0) \times \bG(\A_{f}^{p})/K^p$ is representable by a formal scheme, which is locally formally of finite type over $\spf O_{\Eb}$. 
\item The morphism ${\Theta}_{K,(z,j)}$ is a monomorphism of functors on $\nilp_{O_{\Eb}}$. 
\end{enumerate}}
\end{prop}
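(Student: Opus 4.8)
The strategy is to mimic the argument of Rapoport–Zink \cite[Chapter 6]{Rapoport1996b} and of Kim \cite[\S 7]{Kim2018b}, keeping track of the fact that $\bG$ need not be quasi-split at $p$ and that $\bG(\A_f^p)$ need not act transitively on $\pi_0$. For part (i), I would first observe that $J_{b_0}(\Qp)$ acts on $\RZ_{K_p}(G,\mu,b_0)$ (recalled at the end of Section \ref{rzsp}), and by Corollary \ref{lcif} (ii) this is the same as an action of $\bI_z(\Qp)$; combined with the prime-to-$p$ Hecke action of $\bG(\A_f^p)$ one gets an action of $\bI_z(\Qp) \times \bG(\A_f^p)$ on $\RZ_{K_p}(G,\mu,b_0) \times \bG(\A_f^p)/K^p$, and $\bI_z(\Q)$ acts through its diagonal embedding into this (using Corollary \ref{lcif} (i) for the $\ell \neq p$ components). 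The key finiteness input is that $\bI_z$ is anisotropic modulo center over $\Q$ (Lemma \ref{exmt} (i)), so $\bI_z(\Q) \backslash \bG(\A_f^p)/K^p$ is finite; choosing coset representatives $g_1,\dots,g_m$ as in the Remark after Theorem \ref{paut}, the quotient decomposes as $\coprod_{i=1}^m \Gamma_i \backslash \RZ_{K_p}(G,\mu,b_0)$ with $\Gamma_i = \bI_z(\Q) \cap (J_{b_0}(\Qp) \times g_i K^p g_i^{-1})$ discrete, torsion-free (by neatness of $K^p$) and cocompact modulo center in $J_{b_0}(\Qp)$. It then suffices to check that each $\Gamma_i$ acts freely and discontinuously on the locally formally of finite type formal scheme $\RZ_{K_p}(G,\mu,b_0)$, so that the quotient is again a formal scheme locally formally of finite type over $\spf O_{\Eb}$; discreteness of $\Gamma_i$ in $J_{b_0}(\Qp)$ together with the fact that the $J_{b_0}(\Qp)$-action on the underlying reduced scheme $X_{\sigma(\mu)}(b_0)_{\Kb_p}$ has the usual properness properties (stabilizers of points are compact open, as in \cite[Proposition 2.17]{Rapoport1996b}) gives this. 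Freeness uses torsion-freeness of $\Gamma_i$ and the fact that a nontrivial automorphism would give a nontrivial endomorphism fixing all the tensors, contradicting $\Gamma_i \subset \bI_z(\Q)$ being torsion-free.

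For part (ii), I would argue as in \cite[\S 6]{Rapoport1996b} and \cite[Theorem 3.3.2]{Howard2017}. One must show that for $S \in \nilp_{O_{\Eb}}$ the map
\begin{equation*}
\bI_z(\Q)\backslash \RZ_{K_p}(G,\mu,b_0)(S) \times \bG(\A_f^p)/K^p \to (\sShat_{K,O_{\Eb}})_{/\sSbar_{K,[b_0]}}(S)
\end{equation*}
is injective. Since a formal scheme is determined by its values on Artinian local rings with residue field $\Fpbar$, it is enough to treat such $S$. Given two points $(y_1,g_1)$ and $(y_2,g_2)$ mapping to the same point of $\sS_K$, their images have a common reduction $z_0 \in \sSbar_{K,[b_0]}(\Fpbar)$; pulling back the universal abelian scheme one obtains a quasi-isogeny between the two $p$-divisible groups with all tensors matched, which by rigidity of quasi-isogenies (the $p$-divisible groups are isogenous over $\Fpbar$ and the isogeny lifts uniquely to $S$) together with the matching of prime-to-$p$ level structures produces an element $\gamma \in \Aut_\Q(\cA_{K,z_0})$ fixing all the $s_{\alpha,\ell,z_0}$ and $s_{\alpha,0,z_0}$, i.e.\ $\gamma \in \bI_z(\Q)$, carrying $(y_1,g_1)$ to $(y_2,g_2)$. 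The formal–scheme (as opposed to set-theoretic) refinement comes from the fact that $\Theta_{K,(z,j)}$ identifies $\widehat{\RZ}_{K_p}(G,\mu,b_0)_y$ with $\widehat{\sS}_{K,\Theta_{K,(z,j)}(y)}$ (recalled in the proof of Theorem \ref{llmd}), so the map is formally étale onto its image and injectivity on $\Fpbar$-points with this local isomorphism upgrades to a monomorphism of functors.

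The main obstacle I anticipate is exactly the point flagged in the introduction: the reduction of the injectivity of $\Theta_{K,(z,j)}$ to a question about isogeny classes requires identifying the set of pairs $(y,g)$ over $\Fpbar$ above a fixed $z_0$ with a quotient of $X_{\sigma(\mu)}(b_0)_{\Kb_p}(\Fpbar) \times \bG(\A_f^p)/K^p$, and this in turn relies on Axiom \ref{lift} together with a transitivity statement — namely that every point of the basic Newton stratum in the same isogeny class as $z$ is reached by $\theta_{K,(z,j)}$, i.e.\ $\sI_z = \sSbar_{K,[b_0]}(\Fpbar)$. The surjectivity direction of Theorem \ref{paut} is where this genuinely bites, but already for the monomorphism one needs that the two source points lie over a \emph{common} reduction inside the image of a single $\theta_{K,(z',j')}$; here the special-point-lifting input (Theorem \ref{splf}) and the argument of \cite[Proposition (4.4.13)]{Kisin2017}, adapted to bypass the failure of transitivity of $\bG(\A_f^p)$ on $\pi_0(\Sh_{K_p})$ as explained in the introduction, are what make the identification of isogeny classes go through. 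Once that identification is in place, the proof of (ii) is formal.
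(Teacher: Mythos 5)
Your plan is essentially the argument the paper relies on: the paper's own proof is a one-line reference to \cite[Proposition 4.5]{Kim2018b}, and your sketch for (i) (finiteness of $\bI_{z}(\Q)\backslash \bG(\A_f^p)/K^p$, decomposition into $\coprod_i \Gamma_i\backslash \RZ_{K_p}(G,\mu,b_0)$ with $\Gamma_i$ discrete, torsion-free and acting properly discontinuously) and for (ii) (rigidity of quasi-isogenies plus matching of the crystalline and $\ell$-adic tensors producing an element of $\bI_{z}(\Q)$) is exactly that argument. One correction to your last paragraph: part (ii) does not require Theorem \ref{splf} or the Kisin--(4.4.13)-type transitivity input. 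If $(y_1,g_1)$ and $(y_2,g_2)$ have the same image under $\Theta_{K,(z,j)}$, then their images are literally the same $S$-point of $\sS_{K}$, so the common reduction automatically lies in $\sI_{z}$, because the uniformization map is built from the single fixed pair $(z,j)$; the comparison of isogeny classes $\sI_{z_1}=\sI_{z_2}$ (Proposition \ref{unsj}) is needed only for the surjectivity of $\Theta_{K,(z,j)}$, which is not part of this proposition, so conflating the two would only muddy the logical structure, not break it.
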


\begin{proof}
These follow from the same argument as \cite[Proposition 4.5]{Kim2018b}. This is pointed out by Wansu Kim. 
\end{proof}

The remaining assertion of Theorem \ref{paut} is the surjectivity of ${\Theta}_{K,(z,j)}$ for any $(z,j)$ as in Axiom \ref{lift}. It suffices to prove the following: 

\begin{prop}\label{unsj}
\emph{For $z_1,z_2\in \sS_{K,[b_0]}$, we have $\sI_{z_1}=\sI_{z_2}$. }
\end{prop}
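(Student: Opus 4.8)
The plan is to show that the relation $z' \in \sI_z$ is symmetric and transitive on $\sS_{K,[b_0]}(\Fpbar)$, so that the sets $\sI_{z}$ partition the basic locus; then to show there is only one block. First I would record the symmetry: if $z' \in \sI_z$, then by definition $z'$ arises from $z$ via an element of $X_{\sigma(\mu)}(b_0)_{\Kb_p}(\Fpbar) \times \bG(\A_f^p)$, i.e.\ there is a quasi-isogeny $\cA_{K,z} \dashrightarrow \cA_{K,z'}$ respecting all the tensors $(t_{\alpha,0})$, $(t_{\alpha,\ell})$ and the polarizations up to scalar. Such a quasi-isogeny is invertible in the isogeny category, and its inverse exhibits $z \in \sI_{z'}$; here one uses that $b_{z'}$ is again in $[b_0]$ (the Newton point is a quasi-isogeny invariant) so that the ``framing'' datum $j$ for $z'$ can be chosen compatibly. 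Transitivity is similar: composing two tensor-preserving quasi-isogenies gives a third, so $z'' \in \sI_{z'}$ and $z' \in \sI_z$ imply $z'' \in \sI_z$. Hence $\{\sI_z\}_{z \in \sS_{K,[b_0]}(\Fpbar)}$ is a partition, and it suffices to prove $\sS_{K,[b_0]}(\Fpbar) = \sI_{z_1}$ for a single $z_1$, equivalently that $\sI_z$ is independent of $z$.

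Next I would reduce to special points. Given any $z_2 \in \sS_{K,[b_0]}(\Fpbar)$, lift it to a point $\widetilde{z}_2 \in \Sh_K(\bG,\bX)(\C)$ with some representative $(h, g_{0,p}, h_0^p) \in \bX \times \bG(\Qp) \times \bG(\A_f^p)$. By Lemma \ref{isps}, the point $z'$ attached to $(h,1)$ lies in $\sI_{z_2}$, so by symmetry it is enough to compare $\sI_{z_1}$ and $\sI_{z_2}$ after replacing each by the reduction of an Archimedean point $(h_i,1)$. Now apply Theorem \ref{splf}: choosing a maximal torus $\bT_i \subset \bI_{z_i}$ elliptic over $\Qp$ and a cocharacter $\mu_{0,i} \in \{\mu\}$ with $\overline{\mu}_{0,i} = \nu_{b_0}^{-1}$ (these exist by Lemma \ref{exmt}(ii) and Lemma \ref{adsp}), we find $z_i' \in \sI_{z_i}$ together with a \emph{special} lift $\widetilde{z}_i' \in \Sh_K(\bG,\bX)(L_i)$ whose Mumford--Tate group lies in $\bT_i$ and whose associated filtration is governed by $\mu_{0,i}$. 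By symmetry $\sI_{z_i} = \sI_{z_i'}$, so we are reduced to showing $\sI_{z_1'} = \sI_{z_2'}$ for two reductions of special points.

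For two special points, the plan is to move one to the other by a prime-to-$p$ Hecke correspondence, using that their tori become $\bG(\Qbar)$-conjugate after the inner twisting of Corollary \ref{indq}. Concretely: both $\bI_{z_1'}$ and $\bI_{z_2'}$ are inner forms of $\bG$ with the compatibilities of Corollary \ref{indq}, and the tori $\bT_1, \bT_2$ transport to $\bG(\Qbar)$-conjugate tori of $\bG$ carrying the same conjugacy class of cocharacter $\{\mu\}$; after conjugating we may assume the Shimura-datum homomorphisms $h_{0,1}, h_{0,2}$ coincide. Then $\widetilde{z}_1'$ and $\widetilde{z}_2'$ differ by an element of $\bG(\A_f^p)$ acting through the prime-to-$p$ Hecke action (their $p$-components agree because both $b_0$'s are the same basic element and the framings match, via Lemma \ref{pwct} to absorb the central ambiguity in the powers of $b_0$). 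Since $\sI_{z}$ is by construction stable under the full prime-to-$p$ Hecke action, this gives $z_2'$, hence $z_2$, in $\sI_{z_1}$, and by symmetry $\sI_{z_1} = \sI_{z_2}$. The main obstacle I anticipate is the last step: one must control the $p$-adic component carefully, since unlike in \cite{Kisin2017} we cannot invoke transitivity of $\bG(\A_f^p)$ on $\pi_0$, so the matching of the two special points must be engineered entirely through the isogeny class and the torus, which is exactly where Theorem \ref{splf} (special point lifting in an isogeny class) and Corollary \ref{indq} (independence of $\Q$-form) must be combined with some care about connected components and central twists.
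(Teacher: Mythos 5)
Your overall skeleton matches the paper's argument (the $\sI_z$ partition the basic locus, Theorem \ref{splf} produces special-point lifts inside the isogeny classes, Corollary \ref{indq} identifies $\bI_{z_1}$ and $\bI_{z_2}$ so that a common elliptic torus $\bT$ and a common $h_0$ can be used, Lemma \ref{pwct} controls the central Frobenius, and Lemma \ref{isps} is used to trivialize the adelic components), but the step where you actually compare the two special points has a genuine gap. You write that the two tori ``transport to $\bG(\Qbar)$-conjugate tori'' and that ``after conjugating we may assume the Shimura-datum homomorphisms $h_{0,1},h_{0,2}$ coincide.'' Conjugation by $\bG(\Qbar)$ does not act on $\Sh_K(\bG,\bX)(\C)=\bG(\Q)\backslash \bX\times\bG(\A_f)/K$: to identify the two embeddings $i_1,i_2\colon \bT\hookrightarrow\bG$ you need a \emph{rational} conjugation, and the obstruction to this is a Galois-cohomological class. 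This is exactly the heart of the paper's proof (following \cite[Proposition (4.4.13)]{Kisin2017}): one produces $g\in\bG(\Qbar)$ with $i_2=\Ad(g)\circ i_1$, shows $g\in(\bG/i_1(\bT))(\Q)_1$, forms the class $\beta$ in $\rH^1(\Q,\bI_{z})$, proves $\beta=0$ using \cite[Lemma (4.4.3) (1)]{Kisin2017}, and then runs the twisting argument of \cite[Proposition (4.4.8)]{Kisin2017} to conclude $\sI_{z'_1}=\sI_{z'_2}$ for the reductions of the points $(i_j\circ h_0,1)$. None of this appears in your proposal; ``after conjugating'' silently assumes what has to be proved.

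Relatedly, your final claim that $\widetilde z_1'$ and $\widetilde z_2'$ ``differ by an element of $\bG(\A_f^p)$'' with matching $p$-components, absorbed via Lemma \ref{pwct}, is unsubstantiated and is precisely where the difficulty you yourself flag (no transitivity of $\bG(\A_f^p)$ on $\pi_0$) bites. The paper does not compare adelic components of the two special points at all: instead it compares the reductions of the points attached to $(i_1\circ h_0,1)$ and $(i_2\circ h_0,1)$ (trivial adelic part), where the $p$-adic discrepancy is absorbed into the affine Deligne--Lusztig part $X_{\sigma(\mu)}(b_0)_{\Kb_p}$ of the isogeny class via Lemma \ref{isps}, and the remaining discrepancy between $i_1$ and $i_2$ is handled by the vanishing of $\beta$. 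So the missing ingredient is the rationality/cohomology step; without it the argument does not close.
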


\begin{proof}
We follow the proof of \cite[Proposition (4.4.13)]{Kisin2017}. 

By Corollary \ref{indq}, there is an isomorphism $\bI_{z_1}=\bI_{z_2}$ as $\Q$-groups. We identify $\bI_{z_1}$ and $\bI_{z_2}$ under the above isomorphism, and denote them by $\bI$. Take $z_1,z_2\in \overline{\sS}_{K,[b_0]}(\Fpbar)$, and fix an elliptic maximal torus $\bT$ of $\bI$. By Proposition \ref{splf}, we may assume that there is a quintiple $(h_0,i_1,i_2,g_1,g_2)$, where
\begin{itemize}
\item $h_0\colon \bS \rightarrow \bT_{\R}$ is a homomorphism, 
\item $i_1,i_2\colon \bT \hookrightarrow \bG$ are closed immersions defined over $\Q$,
\item $g_1,g_2\in \bG(\A_f)$,
\end{itemize}
such that the point $\widetilde{z}_i \in \Sh_{K}(\bG,\bX)$ associated to $(i_j \circ h_0,g_j)\in \bX\times \bG(\A_f)$ lifts $z_j$ for $j\in \{1,2\}$. Let $\bZ^{\circ}$ is the identity component of the center of $\bG$. Since $[b]$ is basic, Lemma \ref{pwct} implies that there is $m\in \Zpn$ such that the $p^m$-power Frobenius on $\cA_{K,z_1}$ and $\cA_{K,z_2}$ are given by an element $\gamma \in \bZ^{\circ}(\Q)$. Moreover, the same argument as the proof of \cite[Proposition (4.4.13)]{Kisin2017} gives us that there is $g\in \bG(\Qbar)$ satisfying $i_2=\Ad(g)\circ i_1$, and we have $g\in (\bG/i_1(\bT))(\Q)_{1}$. Here $(\bG/i_1(\bT))(\Q)_{1}$ is the kernel of the composite map
\begin{equation*}
(\bG/i_1(\bT))(\Q)\rightarrow \rH^1(\Q,i_1(\bT))\rightarrow \rH^1(\R,i_1(\bT))\rightarrow \rH^1(\R,K_{\infty}),
\end{equation*}
and $K_{\infty}$ is the centralizer of $i_1\circ h_0\in X$ in $\bG\otimes_{\Q}\R$. 

Let $\beta$ be the image of $g\in (G/i_1(\bT))(\Q)$ under the composite
\begin{equation*}
(\bG/i_1(\bT))(\Q)\rightarrow \rH^1(\Q,\bT)\rightarrow \rH^1(\Q,\bI_{z_0}). 
\end{equation*}
Then \cite[Lemma (4.4.3) (1)]{Kisin2017} implies $\beta=0$. Hence we have $\sI_{z'_2}=\sI_{z'_1}$ by the same argument as the proof of \cite[Proposition (4.4.8)]{Kisin2017}. Here $z'_j \in \sS_{K}(\Fpbar)$ is the reduction of the point of $\Sh_{K}(\bG,\bX)$ attached to $(i_j\circ h_0,1)\in \bX \times \bG(\A_f)$ for $j\in \{1,2\}$. Applying Lemma \ref{isps} to $\sI_{z_1}$ and $\sI_{z_2}$, we obtain the desired equality $\sI_{z_1}=\sI_{z_2}$. 
\end{proof}

\begin{rem}
If $G$ is quasi-split, then Proposition \ref{unsj} can be proved by the same argument as that of \cite[Proposition (4.4.13)]{Kisin2017} by using \cite[Theorem 9.4]{Zhou2020}. This is pointed out by Pol van Hoften. 
\end{rem}

\end{document}